\newtheorem{thm}{Théorème}[section]
\newtheorem{lem}[thm]{Lemme}
\newtheorem{cor}[thm]{Corollaire}
\newtheorem{prop}[thm]{Proposition}
\theoremstyle{definition}
\newtheorem{definition}[thm]{Définition}
\newtheorem*{definition*}{Définition}
\theoremstyle{remark}
\newtheorem{remarques}[thm]{Remarques}
\newtheorem{remarque}[thm]{Remarque}
\newtheorem{exemple}[thm]{Exemple}
\newtheorem{question}[thm]{Question}
\numberwithin{equation}{section}
\renewcommand{\emptyset}{\varnothing}
\def\myrightarrow{{\setbox\z@\hbox{$\rightarrow$}\dimen0\ht\z@\multiply\dimen0 6\divide\dimen0 10\ht\z@\dimen0\box\z@}}
\def\myrightarrowfill@{\arrowfill@\relbar\relbar\myrightarrow}
\newcommand{\myxrightarrow}[2][]{\ext@arrow 0359\myrightarrowfill@{#1}{#2}}
\edef\textsection{\textsection\penalty10000\hskip3.4pt}
\newcommand{\longisoto}{\myxrightarrow{\,\,\sim\,\,}}
\newcommand{\red}{{\mathrm{red}}}
\newcommand{\nr}{{\mathrm{nr}}}
\renewcommand{\C}{{\mathbf C}}
\newcommand{\Q}{{\mathbf Q}}
\newcommand{\Qp}{{\mathbf Q}_p}
\newcommand{\Z}{{\mathbf Z}}
\newcommand{\Zl}{{\mathbf Z}_\ell}
\newcommand{\F}{{\mathbf F}}
\newcommand{\cd}{{\mathrm{cd}}}
\newcommand{\Spec}{{\mathrm{Spec}}}
\newcommand{\Ker}{{\mathrm{Ker}}}
\newcommand{\PP}{\mathrm{P}}
\renewcommand{\P}{\mathbf{P}}
\newcommand{\sO}{\mathscr{O}}
\newcommand{\sX}{\mathscr{X}}
\renewcommand{\leq}{\leqslant}
\renewcommand{\geq}{\geqslant}
\title[Sur une conjecture de Kato et Kuzumaki]{Sur une conjecture de Kato et Kuzumaki concernant les hypersurfaces de Fano}
\author{Olivier Wittenberg}
\address{D\'epartement de math\'ematiques et applications, \'Ecole normale sup\'erieure, 45~rue d'Ulm, 75230 Paris Cedex 05, France}
\email{wittenberg@dma.ens.fr}
\date{29 août 2013; révisé le 19 octobre 2014}
\begin{document}

\begin{abstract}
Nous montrons que le corps~$\Q_p$ et les corps de nombres totalement imaginaires
vérifient la propriété~$C_1^1$ conjecturée par Kato et Kuzumaki en~1986.
Autrement dit, si~$k$ est l'un de ces corps et $f \in k[x_0,\dots,x_n]$ est un polynôme homogène de degré $d \leq n$,
tout élément de~$k$ s'écrit comme produit de normes depuis des extensions finies de~$k$ dans lesquelles~$f$ admet un zéro non trivial.
Nous établissons aussi la
conjecture d'Ax sur les corps pseudo-algébriquement clos parfaits pour les
corps dont le groupe de Galois absolu est un pro-$p$\nobreakdash-groupe.
\end{abstract}

\maketitle

\section{Introduction}

La conjecture dont il est question dans le titre fut proposée en~1986 par Kato et Kuzumaki~\cite{katokuzumaki} et
porte sur les liens entre dimension cohomologique des corps,
$K$\nobreakdash-théorie algébrique et hypersurfaces projectives de petit degré.

Voici son énoncé.
Pour tout corps~$k$ et tout entier $q\geq 0$, notons $K_q(k)$ le $q$\nobreakdash-ème
groupe de $K$\nobreakdash-théorie de Milnor de~$k$.  Si~$X$ est un $k$\nobreakdash-schéma de type fini, notons $N_q(X/k)$ le sous-groupe de $K_q(k)$
engendré par les images des applications norme $N_{k(x)/k}:K_q(k(x)) \to K_q(k)$
lorsque~$x$ parcourt l'ensemble des points fermés de~$X$
(voir \cite[\textsection1.7]{katogeneralizationlcft} pour la construction des applications norme).
Rappelons quelques exemples classiques. Le groupe $N_0(X/k)$ est le sous-groupe de $K_0(k)=\Z$ engendré par l'indice de~$X$ sur~$k$.
Si~$X$ est une variété de Severi--Brauer, le groupe $N_1(X/k) \subset K_1(k)=k^*$ est l'image de la norme réduite $\mathrm{Nrd}:A^*\to k^*$, où~$A$ désigne
l'algèbre centrale simple correspondant à~$X$.  Si~$X$ est une quadrique projective,
le groupe~$N_1(X/k)$ contient~$k^{*2}$ et
le quotient coïncide avec l'image de la norme spinorielle associée à une forme quadratique définissant~$X$
(conséquence du principe de norme de Knebusch, cf.~\cite{knebusch}, \cite[Satz~A]{kneser}).
D'après un théorème de Merkurjev et Suslin, un corps parfait~$k$ est
de dimension cohomologique~$\leq 2$
si et seulement si la norme réduite
de toute algèbre centrale simple sur toute extension finie de~$k$ est surjective
(cf.~\cite[Chapitre~II, \textsection4.5]{serrecg}).

Soit $i \geq 0$ un entier.
Suivant~\cite{katokuzumaki}, on dit que le corps~$k$ vérifie la propriété~$C_i^q$ si pour toute extension finie $k'/k$, tout entier $n \geq 1$ et toute hypersurface
$X \subset \P^n_{k'}$ de degré~$d$
avec~$d^i \leq n$, l'égalité $K_q(k')=N_q(X/k')$ a lieu.
En ces termes, la conjecture avancée dans~\cite{katokuzumaki} est la suivante:
quels que soient les entiers $i,q\geq 0$
et le corps~$k$,
la propriété~$C_i^q$ vaut pour~$k$
si et seulement si~$k$ est de dimension~$\leq i+q$.
La notion de dimension employée ici coïncide avec la dimension cohomologique si~$k$ est parfait; sans hypothèse sur~$k$, la dimension majore la dimension cohomologique
(voir \cite{katokuzumaki} pour la définition générale).

Des exemples de corps de caractéristique~$0$ et de dimension cohomologique~$i$ ne vérifiant pas la propriété~$C_i^0$
furent donnés par Merkurjev~\cite{merkurjev} pour $i=2$ puis par Colliot-Thélène et Madore~\cite{ctmadore} pour~$i=1$.
Ainsi la conjecture de~\cite{katokuzumaki} est-elle trop optimiste en toute généralité.
Cependant, les contre-exemples connus reposent tous sur le procédé de
construction de grands corps par récurrence transfinie
développé par Merkurjev et Suslin (cf.~\cite[\textsection11.4]{merkurjevsuslin}).
La conjecture de Kato et Kuzumaki
pour les corps apparaissant naturellement en géométrie algébrique
et en arithmétique reste donc un problème ouvert.

L'un des corps les plus simples pour lesquels la conjecture n'est pas résolue est le corps~$\Qp$ des nombres $p$\nobreakdash-adiques,
qui est de dimension cohomologique~$2$ mais n'est pas~$C_2$ au sens d'Artin et Lang (cf.~\cite{terjanian}).  Il vérifie la propriété~$C_0^2$ d'après Bass et Tate
(cf.~\cite[Corollary~A.15]{milnor}).
C'est une question ouverte de savoir si~$\Q_p$ satisfait la propriété~$C_2^0$.
L'un des buts du présent article, atteint au~\textsection\ref{sec:corpspadiquesC11}, est de démontrer que~$\Q_p$
vérifie la propriété~$C_1^1$.
Autrement dit, pour tout corps $p$\nobreakdash-adique~$k$, tout entier $n\geq 1$ et toute hypersurface $X \subset \P^n_k$ de degré $d \leq n$,
tout élément de~$k^*$ s'écrit comme produit de normes depuis des extensions finies variables~$k'/k$ telles que $X(k')\neq\emptyset$.
Cette propriété concerne les hypersurfaces de Fano.
Nous montrons aussi que les corps de nombres totalement imaginaires sont des corps~$C_1^1$
et que les corps locaux supérieurs de dimension cohomologique~$d$ et de caractéristique résiduelle~$p$
vérifient la propriété~$C_1^{d-1}$ \og{}hors de~$p$\fg{}
(c'est-à-dire que
pour toute hypersurface $X \subset \P^n_k$ de degré~$\leq n$, le quotient $K_{d-1}(k)/N_{d-1}(X/k)$ est annulé par une puissance de~$p$).

Tous ces énoncés, ainsi que la propriété~$C_2^0$ pour les corps $p$\nobreakdash-adiques, avaient été établis
par Kato et Kuzumaki dans~\cite{katokuzumaki} sous la restriction que les hypersurfaces considérées
sont de degré \emph{premier} (et, dans le cas des corps de nombres totalement imaginaires, sous
la restriction supplémentaire qu'elles sont lisses).
Les arguments de~\cite{katokuzumaki} reposent sur des considérations élémentaires remarquables
concernant les polynômes homogènes et les extensions finies de corps locaux,
et sur le théorème de Chevalley--Warning.
La méthode employée ici est différente.
Notre point de départ est la définition d'une variante de la propriété~$C_1^q$ étendue à tous les schémas propres
sur le corps considéré:

\begin{definition*}
Soit $q\geq 0$ un entier.  Un corps~$k$ vérifie la propriété~$C_1^q$ \emph{forte} si pour toute extension finie~$k'/k$, tout $k'$\nobreakdash-schéma propre~$X$ et tout faisceau cohérent~$E$ sur~$X$,
la caractéristique d'Euler--Poincaré $\chi(X,E)=\sum (-1)^i \dim_{k'}H^i(X,E)$
annule
le groupe abélien $K_q(k')/N_q(X/k')$.
\end{definition*}

Cette définition en termes de caractéristiques d'Euler--Poincaré est directement inspirée de~\cite{elw}, où il est établi que le corps~$\C((t))$ vérifie la propriété~$C_1^0$ forte
et que l'extension non ramifiée maximale d'un corps $p$\nobreakdash-adique
vérifie la propriété~$C_1^0$ forte hors de~$p$.

Comme le quotient $K_q(k')/N_q(X/k')$ est annulé par le degré sur~$k'$ de tout point fermé de~$X$
(cf.~\cite[Remark~7.3.1]{gilleszamuely}),
la condition apparaissant dans la définition de la propriété~$C_1^q$ forte est triviale
pour les faisceaux cohérents~$E$ supportés en dimension~$0$.
Prenant à l'opposé pour~$E$ le faisceau structural~$\sO$,
on constate que
la propriété~$C_1^q$ forte entraîne la propriété~$C_1^q$
puisque
les hypersurfaces $X \subset \P^n$ de degré~$d \leq n$
vérifient $\chi(X,\sO)=1$.
Grâce à la souplesse laissée à la fois dans le choix de~$E$ et dans celui de~$X$,
la propriété~$C_1^q$ forte se prête bien mieux aux dévissages que la propriété~$C_1^q$.
C'est la flexibilité qui en résulte qui nous permettra de généraliser les résultats de~\cite{katokuzumaki}
portant sur~$C_1^q$.

Le plan du texte est le suivant.  Au~\textsection\ref{sec:devissage},
nous axiomatisons le principe de dévissage sous-jacent à la démonstration de~\cite[Theorem~3.1]{elw}.
Une première application en est donnée au~\textsection\ref{sec:ax}: nous y démontrons
la conjecture d'Ax
sur les corps pseudo-algébriquement clos parfaits dans le cas où le groupe de Galois absolu du corps considéré
est un pro-$p$\nobreakdash-groupe
(la conjecture était précédemment connue pour les corps de caractéristique~$0$, pour les corps contenant un corps
algébriquement clos et pour les corps de groupe de Galois absolu abélien).
Nous démontrons aussi au~\textsection\ref{sec:ax} que les corps finis vérifient la propriété~$C_1^0$ forte;
cet énoncé servira par la suite de substitut pour le théorème de Chevalley--Warning.
Le~\textsection\ref{sec:transition} est consacré à un théorème de transition pour la propriété~$C_1^q$ forte.
De ce théorème et de la propriété~$C_1^0$ forte pour les corps finis, il résulte que le corps~$\Q_p$ vérifie la propriété~$C_1^1$ forte hors de~$p$.
Le~\textsection\ref{sec:corpspadiquesC11} contient les raffinements géométriques nécessaires à la démonstration de la propriété~$C_1^1$ en~$p$ pour~$\Q_p$.
Au~\textsection\ref{sec:cdn} nous établissons, à l'aide des résultats des~\textsection\ref{sec:transition} et~\ref{sec:corpspadiquesC11}
et d'un théorème de Kato et Saito,
la propriété~$C_1^1$ pour les corps de nombres totalement imaginaires.  Enfin, le~\textsection\ref{sec:qrem} contient quelques compléments,
remarques
et questions ouvertes, y compris notamment une solution
au problème \cite[\textsection5, Problem~1]{katokuzumaki}, dans le cas $i=1$, pour les corps hilbertiens.

\medskip
\emph{Remerciements.}
Une partie des idées qui interviennent dans la preuve
de la conjecture~$C_1^1$ pour les corps $p$\nobreakdash-adiques
ont pour origine l'article~\cite{elw}, écrit en commun avec Hélène~Esnault et Marc~Levine
et qui concernait l'indice des variétés sur le corps des fractions d'un anneau de valuation discrète
hensélien à corps résiduel algébriquement clos.
Je leur suis reconnaissant pour les échanges que nous avons eus lors de cette collaboration.
Je remercie d'autre part
Hélène Esnault de ses encouragements amicaux et de nos discussions
sur la conjecture~$C_2^0$ pour les corps $p$\nobreakdash-adiques,
Philippe~Gille de ses réponses à mes questions sur les groupes de normes d'espaces homogènes de groupes linéaires,
Olivier Benoist d'utiles discussions concernant le~\textsection\ref{sec:exemple},
Dan~Abramovich de ses explications sur le lemme~\ref{lem:redsemistable}
et les rapporteurs de leur lecture très attentive.

\medskip
\emph{Conventions.}
Si~$X$ est un schéma de type fini sur un corps~$k$, l'\emph{indice} de~$X$ sur~$k$ est le pgcd des degrés des points fermés de~$X$.
La notation $K_q(k)$ désigne le $q$\nobreakdash-ème groupe de $K$\nobreakdash-théorie
de Milnor (cf.~\cite{milnordef}, \cite[Chapter~7]{gilleszamuely})
et $N_q(X/k) \subset K_q(k)$ le $q$\nobreakdash-ème \emph{groupe de normes} de~$X$ sur~$k$, c'est-à-dire le plus petit sous-groupe de~$K_q(k)$
contenant $N_{k(x)/k}(K_q(k(x)))$ pour tout point fermé~$x$ de~$X$.
Rappelons que $K_0(k)=\Z$ et $K_1(k)=k^*$.
Si~$X$ n'est pas vide, l'indice de~$X$ sur~$k$ est l'ordre du quotient $K_0(k)/N_0(X/k)$.

\section{Principe de dévissage}
\label{sec:devissage}

La proposition~\ref{prop:devissage} ci-dessous replace dans un cadre général les arguments de dévissage employés dans~\cite{elw}.
Nous nous en servirons à de nombreuses reprises dans les~\textsection\ref{sec:ax} à~\ref{sec:qrem}.

\begin{prop}
\label{prop:devissage}
Soit~$k$ un corps.
Soit~$\PP$ une propriété des $k$\nobreakdash-schémas propres.
Supposons donné, pour chaque $k$\nobreakdash-schéma propre~$X$, un entier $n_X \in \Z$.
Supposons les trois conditions suivantes satisfaites:
\begin{enumerate}
\item Pour tout morphisme de $k$\nobreakdash-schémas propres $Y \to X$, l'entier~$n_X$ divise~$n_Y$.
\item Pour tout $k$\nobreakdash-schéma propre~$X$ vérifiant~$\PP$,
l'entier~$n_X$ divise $\chi(X,\sO_X)$.
\item Pour tout $k$\nobreakdash-schéma propre et intègre~$X$, il existe un $k$\nobreakdash-schéma propre~$Y$ vérifiant~$\PP$
et un $k$\nobreakdash-morphisme $f:Y\to X$
tel que les entiers $\chi(Y_\eta,\sO_{Y_\eta})$ et~$n_X$ soient premiers entre eux, où~$Y_\eta$ désigne la fibre générique de~$f$.
\end{enumerate}
Alors pour tout $k$\nobreakdash-schéma propre~$X$ et tout faisceau cohérent~$E$ sur~$X$, l'entier~$n_X$ divise $\chi(X,E)$.
\end{prop}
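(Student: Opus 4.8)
The plan is to prove the conclusion by Noetherian induction on $d$, establishing the statement $S(d)$: \emph{for every proper $k$-scheme $X$ and every coherent sheaf~$E$ on~$X$ with $\dim\mathrm{Supp}\,E \leq d$, the integer~$n_X$ divides $\chi(X,E)$.} The base case $S(-1)$ (where $E=0$) is trivial, and the whole content is the inductive step $S(d-1)\Rightarrow S(d)$. The first move is to reduce to the case where $X$ is integral of dimension~$d$ and $E=\sO_X$. Using additivity of~$\chi$ in short exact sequences together with the standard dévissage of a coherent sheaf — a finite filtration whose successive quotients have the form $(\iota_Z)_*\mathcal{L}$ for $Z\subseteq X$ an integral closed subscheme and $\mathcal{L}$ coherent on~$Z$ — and the identity $\chi(X,(\iota_Z)_*\mathcal{L})=\chi(Z,\mathcal{L})$, I obtain $\chi(X,E)=\sum_i\chi(Z_i,\mathcal{L}_i)$ with each $\dim Z_i\leq d$. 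For the terms with $\dim Z_i<d$, the hypothesis $S(d-1)$ gives $n_{Z_i}\mid\chi(Z_i,\mathcal{L}_i)$, and condition~(1) applied to $Z_i\hookrightarrow X$ gives $n_X\mid n_{Z_i}$, whence $n_X\mid\chi(Z_i,\mathcal{L}_i)$. It therefore remains to treat an integral proper~$X$ of dimension~$d$, with function field $k(X)$ and generic point~$\eta$, and to show $n_X\mid\chi(X,E)$ for every coherent~$E$.

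I would next establish the congruence $\chi(X,E)\equiv(\mathrm{rk}\,E)\,\chi(X,\sO_X)\pmod{n_X}$, where $\mathrm{rk}\,E=\dim_{k(X)}E_\eta$ is the generic rank. This rests on the fact that in the Grothendieck group $G_0(X)$ one has $[E]-(\mathrm{rk}\,E)\,[\sO_X]\in F_{d-1}G_0(X)$, the subgroup generated by coherent sheaves supported in dimension~$<d$: after removing the torsion subsheaf (supported in dimension $<d$) one may assume $E$ torsion-free, and the rank-one case is handled by clearing denominators to get an injection $\mathcal{I}\hookrightarrow\sO_X$ with $[\mathcal{I}]=[\sO_X]-[\sO_X/\mathcal{I}]$, the general case following by induction on the rank (or by the standard fact that the top graded piece of the dimension filtration of $G_0(X)$ is generated by $[\sO_X]$). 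Applying~$\chi$ and using $S(d-1)$ to annihilate the contribution of $F_{d-1}G_0(X)$ yields the congruence. This step does \emph{not} yet require $n_X\mid\chi(X,\sO_X)$: it merely relates $\chi(X,E)$ to $\chi(X,\sO_X)$ modulo~$n_X$.

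The heart of the argument is then to prove $n_X\mid\chi(X,\sO_X)$, which is where conditions~(2) and~(3) enter. By condition~(3) choose a proper~$Y$ satisfying~$\PP$ and a $k$-morphism $f\colon Y\to X$ (automatically proper) with $\gcd(\chi(Y_\eta,\sO_{Y_\eta}),n_X)=1$. Condition~(2) gives $n_Y\mid\chi(Y,\sO_Y)$ and condition~(1) gives $n_X\mid n_Y$, so $n_X\mid\chi(Y,\sO_Y)$. Now the Leray spectral sequence yields $\chi(Y,\sO_Y)=\sum_i(-1)^i\chi(X,R^if_*\sO_Y)$. Flat base change along $\Spec k(X)\to X$ identifies the generic rank of $R^if_*\sO_Y$ with $\dim_{k(X)}H^i(Y_\eta,\sO_{Y_\eta})$, so applying the congruence of the previous paragraph to each $R^if_*\sO_Y$ gives $\chi(Y,\sO_Y)\equiv\chi(Y_\eta,\sO_{Y_\eta})\,\chi(X,\sO_X)\pmod{n_X}$. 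Combining this with $n_X\mid\chi(Y,\sO_Y)$ and the coprimality from condition~(3), I conclude $n_X\mid\chi(X,\sO_X)$; together with the congruence this gives $n_X\mid\chi(X,E)$ for all coherent~$E$, completing $S(d)$.

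The main obstacle I anticipate is the congruence step and its application to the higher direct images. One must correctly handle the generic-rank homomorphism on $G_0(X)$ — including the subtlety that $X$ is assumed only proper, not projective, so that ample twists are unavailable and one relies on the abstract structure of the dimension filtration — and verify via flat base change that the generic rank of $R^if_*\sO_Y$ is \emph{exactly} $\dim_{k(X)}H^i(Y_\eta,\sO_{Y_\eta})$, so that the alternating sum collapses precisely to the factor $\chi(Y_\eta,\sO_{Y_\eta})$ that condition~(3) has arranged to be prime to~$n_X$. Everything else is additivity of~$\chi$, standard dévissage of coherent sheaves, and bookkeeping with conditions~(1)–(3).
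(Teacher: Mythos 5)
Your proposal is correct and follows essentially the same route as the paper: your congruence $\chi(X,E)\equiv \mathrm{rk}(E)\,\chi(X,\sO_X)\pmod{n_X}$, fed into the Leray spectral sequence with the generic ranks of the $R^if_*\sO_Y$ computed by flat base change, reproduces exactly the paper's key identity $\chi(Y,\sO_Y)=\chi(Y_\eta,\sO_{Y_\eta})\,\chi(X,\sO_X)+\chi(C,F)$, which the paper derives from generic freeness of the $R^qf_*\sO_Y$ and the localization sequence for $G_0(X)$, and both arguments then conclude by the same coprimality step. The remaining differences are bookkeeping only — you induct on $\dim\mathrm{Supp}\,E$ and absorb the error terms via the inductive hypothesis applied to sheaves on $X$ itself, whereas the paper inducts on $\dim X$ and uses $n_C\mid\chi(C,F)$ together with condition (1).
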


\begin{proof}
Procédons par récurrence sur la dimension du schéma~$X$
apparaissant dans la
conclusion de la proposition.
La conclusion étant
satisfaite pour~$X$ vide, on peut la supposer
satisfaite par tout $k$\nobreakdash-schéma propre de dimension~$<\dim(X)$.
Par ailleurs,
comme le groupe de Grothendieck~$G_0(X)$ des faisceaux cohérents sur~$X$ est engendré par les classes des faisceaux~$\sO_Z$ lorsque~$Z$ parcourt
l'ensemble des sous-schémas fermés
intègres de~$X$
(cf.~\cite[\textsection8, Lemme~17]{BorelSerre},
\cite[Proposition~1.1]{BerthelotSGA6}),
il suffit de montrer que~$n_X$ divise $\chi(Z,\sO_Z)$ pour tout tel~$Z$.
Quitte à remplacer~$X$ par~$Z$,
on peut,
grâce à la propriété~(1) appliquée à l'inclusion de~$Z$ dans~$X$,
supposer que~$X$ est intègre et que $E=\sO_X$.
Soit alors $f:Y\to X$ donné par la propriété~(3).
Tout faisceau cohérent sur un schéma noethérien réduit étant génériquement libre,
il existe un ouvert dense $U \subset X$ tel que la restriction de $R^q f_* \sO_Y$ à~$U$ soit libre pour tout $q \geq 0$.
Notons $i:C \hookrightarrow X$ l'inclusion du sous-schéma fermé réduit $C=X \setminus U$.
D'après la suite exacte de localisation pour le groupe~$G_0(X)$,
il existe un faisceau cohérent virtuel~$F$ sur~$C$ tel que
\begin{align}
\sum_{q \geq 0} (-1)^q [R^q f_* \sO_Y] = \chi(Y_\eta,\sO_{Y_\eta})[\sO_X] + [i_*F]\rlap{\text{,}}
\end{align}
où les crochets désignent les classes dans~$G_0(X)$
(cf.~\cite[\textsection8, Proposition~7]{BorelSerre}).
Il~s'ensuit que
\begin{align}
\label{eq:chichichi}
\chi(Y,\sO_Y)=\chi(Y_\eta,\sO_{Y_\eta})\mkern3mu\chi(X,\sO_X)+\chi(C,F)
\end{align}
(\emph{loc.\ cit.}, \textsection5d).
L'entier~$n_Y$ divise $\chi(Y,\sO_Y)$ d'après~(2),
l'entier~$n_C$ divise~$\chi(C,F)$
par hypothèse de récurrence; enfin, l'entier~$n_X$ divise~$n_Y$ et~$n_C$ d'après~(1).
Comme~$n_X$ et~$\chi(Y_\eta,\sO_{Y_\eta})$ sont premiers entre eux,
on conclut ainsi de~\eqref{eq:chichichi} que~$n_X$ divise~$\chi(X,\sO_X)$.
\end{proof}

\begin{remarques}
\label{rq:devissage}
(i)
Dans toutes les situations où nous appliquerons la proposition~\ref{prop:devissage},
le morphisme $f:Y\to X$ de la propriété~(3) sera génériquement fini et dominant.
Pour un tel morphisme, on a $\chi(Y_\eta,\sO_{Y_\eta})=\deg(f)$.

(ii)
Si l'on sait seulement que les propriétés~(1) à~(3) sont satisfaites pour les schémas~$X$ de dimension~$\leq d$,
où~$d$ est un entier fixé,
la preuve ci-dessus assure tout de même la validité de
la conclusion de la proposition~\ref{prop:devissage} pour les $k$\nobreakdash-schémas propres~$X$ de dimension~$\leq d$.
\end{remarques}

\begin{exemple}
\label{exemple:devissage}
Soit~$R$ un anneau de valuation discrète hensélien excellent, de corps des fractions~$K$, de corps résiduel algébriquement clos.
Soit~$\ell$ un nombre premier inversible dans~$R$.  Si~$X$ est un $K$\nobreakdash-schéma propre,
notons~$n_X$ la plus grande puissance de~$\ell$ divisant l'indice de~$X$ sur~$K$ si~$X$ est non vide, ou~$0$ sinon,
et disons que~$X$ possède la propriété~$\PP$
s'il existe un $R$\nobreakdash-schéma régulier, propre et plat de fibre générique isomorphe à~$X$.
La propriété~(1) est évidente, la propriété~(3) est satisfaite d'après un
théorème de Gabber et de~Jong
\cite[Theorem~1.4]{IllusieGabber}
et la propriété~(2) est \cite[Theorem~2.1]{elw}.
La~proposition~\ref{prop:devissage} permet donc de retrouver \cite[Theorem~3.1]{elw}.
\end{exemple}

\section{Une application à la conjecture d'Ax}
\label{sec:ax}

Rappelons qu'un corps~$k$ est \emph{pseudo-algébriquement clos} si tout $k$\nobreakdash-schéma
de type fini géométriquement intègre possède un point rationnel (cf.~\cite{axpac}, \cite{friedjarden}).
D'après une conjecture d'Ax, tout corps pseudo-algébriquement clos parfait devrait être~$C_1$
au sens de~\cite{langqac}.  Cette conjecture est démontrée pour les corps
de caractéristique~$0$ (Kollár~\cite{kollarax}), pour les corps contenant un corps algébriquement clos (Denef--Jarden--Lewis~\cite{denefjardenlewis})
et pour les corps dont le groupe de Galois absolu est abélien (Ax~\cite[Theorem~D]{axpac}).
Nous allons voir
qu'une simple application du principe de dévissage du~\textsection\ref{sec:devissage}
permet d'établir la conjecture d'Ax pour les corps dont le groupe de Galois absolu est un pro-$p$\nobreakdash-groupe
(non nécessairement abélien).

\begin{thm}
\label{th:axconj}
Soit~$p$ un nombre premier.
Soit~$k$ un corps dont le groupe de Galois absolu soit un pro-$p$\nobreakdash-groupe.
Si $X \subset \P^n_k$ est une hypersurface de degré $d \leq n$,
il existe un fermé $W \subset X$ géométriquement irréductible sur~$k$.
\end{thm}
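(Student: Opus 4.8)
Le plan est de déduire l'énoncé de la proposition~\ref{prop:devissage}. Comme le groupe de Galois absolu d'un corps ne change pas lorsqu'on le remplace par sa clôture parfaite, et comme l'irréductibilité géométrique d'un fermé est insensible aux extensions radicielles du corps de base, je me ramènerais d'emblée au cas où~$k$ est parfait. Je dirais alors qu'un $k$\nobreakdash-schéma propre~$X$ vérifie~$\PP$ s'il est normal (ce qui, $k$~étant parfait, équivaut à la normalité géométrique), et je poserais $n_X=1$ si~$X$ contient un fermé géométriquement irréductible sur~$k$, et $n_X=p$ sinon. La conclusion de la proposition~\ref{prop:devissage}, appliquée au faisceau structural d'une hypersurface $X\subset\P^n_k$ de degré $d\leq n$, pour laquelle $\chi(X,\sO_X)=1$, forcerait $n_X=1$, c'est-à-dire l'existence du fermé cherché. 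Il ne resterait qu'à vérifier les trois hypothèses.

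Pour~(1), si $f:Y\to X$ est un morphisme de $k$\nobreakdash-schémas propres et si $W\subset Y$ est un fermé géométriquement irréductible, son image $f(W)$ est fermée (car~$f$ est propre) et demeure géométriquement irréductible: en effet $W_{\overline k}$ est irréductible et domine $f(W)_{\overline k}$, lequel est donc irréductible, si bien que le fermé $f(W)$, défini sur~$k$, est géométriquement irréductible par descente. Ainsi~$X$ contient un fermé géométriquement irréductible dès que~$Y$ en contient un, ce qui équivaut exactement à la divisibilité $n_X\mid n_Y$. Pour~(3), étant donné un $k$\nobreakdash-schéma propre et intègre~$X$, je prendrais pour~$Y$ son normalisé: le morphisme $f:Y\to X$ est fini, birationnel et dominant, de sorte que $Y_\eta=\Spec k(X)$ et $\chi(Y_\eta,\sO_{Y_\eta})=1$ (cf.\ remarque~\ref{rq:devissage}~(i)), entier visiblement premier à~$n_X$; et~$Y$, étant normal, vérifie~$\PP$. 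Je soulignerais que ce recours au normalisé, de degré~$1$, permet d'éviter tout théorème d'altération de degré premier à~$p$, dont on ne disposerait de toute façon pas en caractéristique~$p$.

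La vérification de~(2) constitue le cœur de l'argument et en sera le point délicat. Soit~$X$ vérifiant~$\PP$, c'est-à-dire géométriquement normal; on peut supposer que~$X$ ne contient aucun fermé géométriquement irréductible, le cas contraire donnant $n_X=1$. Les composantes connexes de~$X_{\overline k}$ sont alors irréductibles (un schéma normal a ses composantes connexes irréductibles) et ce sont précisément les composantes irréductibles géométriques de~$X$; elles sont deux à deux disjointes et le groupe $G_k=\mathrm{Gal}(\overline k/k)$ les permute. Dire que~$X$ contient un fermé géométriquement irréductible revient à dire que l'une d'elles est globalement fixée par~$G_k$: un tel fermé, étant irréductible, est contenu dans une unique composante, laquelle est alors nécessairement stable, et réciproquement toute composante stable redescend en un fermé géométriquement irréductible. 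Sous notre hypothèse, aucune composante n'est fixe, donc toutes les orbites de~$G_k$ sur l'ensemble fini des composantes sont de cardinal divisible par~$p$, puisque $G_k$ est un pro-$p$\nobreakdash-groupe.

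Pour conclure~(2), j'utiliserais que deux composantes conjuguées sous~$G_k$ ont même caractéristique d'Euler--Poincaré (un élément de~$G_k$ induit un isomorphisme abstrait entre elles, préservant les dimensions sur~$\overline k$). Comme $\chi(X,\sO_X)=\chi(X_{\overline k},\sO_{X_{\overline k}})=\sum_i\chi(X_i,\sO_{X_i})$, en regroupant cette somme selon les orbites de~$G_k$ on obtient une somme de termes dont chacun est le produit d'un cardinal d'orbite, divisible par~$p$, par une caractéristique d'Euler commune aux composantes de l'orbite; l'entier $\chi(X,\sO_X)$ est donc divisible par~$p$, d'où $n_X=p\mid\chi(X,\sO_X)$. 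Les trois hypothèses étant acquises, la proposition~\ref{prop:devissage} donne $n_X\mid\chi(X,E)$ pour tout~$X$ et tout~$E$, ce qui achève la preuve d'après le choix de~$E=\sO_X$ sur l'hypersurface.
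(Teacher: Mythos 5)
Votre preuve est correcte, mais elle emprunte un chemin sensiblement différent de celui de l'article, bien que le squelette soit commun (proposition~\ref{prop:devissage}, normalité pour~$\PP$, normalisation pour la propriété~(3), et $\chi(X,\sO_X)=1$ pour l'hypersurface). L'article démontre d'abord un énoncé plus fin et inconditionnel, la proposition~\ref{prop:indiceirred}: sur \emph{tout} corps, l'entier~$i_X$, pgcd des degrés $[k_W:k]$ lorsque~$W$ parcourt les fermés intègres de~$X$, divise $\chi(X,E)$; la propriété~(2) y est obtenue en munissant un schéma propre, irréductible et normal~$X$ d'une structure de $k_X$\nobreakdash-schéma et en invoquant la formule~\eqref{eq:multiplicativitedim}. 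L'hypothèse pro-$p$ n'intervient qu'à la toute fin: $[k_W:k]$ premier à~$p$ force l'extension $k_W/k$ à être radicielle, donc~$W$ géométriquement irréductible (cf.~\cite[Proposition~4.5.9]{ega42}). Vous, au contraire, incorporez l'hypothèse pro-$p$ dans le dévissage lui-même via l'invariant à deux valeurs $n_X\in\{1,p\}$, et vous établissez~(2) par comptage d'orbites galoisiennes sur les composantes connexes de~$X_{\bar k}$, après une réduction (légitime) au cas où~$k$ est parfait. Votre comptage est en fait l'avatar géométrique de l'argument de l'article: pour~$X$ propre, normal et irréductible sur~$k$ parfait, $X_{\bar k}$ possède exactement $[k_X:k]$ composantes, conjuguées entre elles et de même caractéristique d'Euler--Poincaré, de sorte que votre somme par orbites recalcule la divisibilité $[k_X:k]\mid\chi(X,\sO_X)$ de l'autre côté de la descente galoisienne. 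Ce que chaque version apporte: la vôtre est autoportante, rend le mécanisme pro-$p$ explicite et évite l'introduction de~$i_X$ (notez toutefois que votre remarque sur l'absence d'altérations ne vous distingue pas de l'article, qui utilise lui aussi la simple normalisation pour~(3)); celle de l'article, valable sans hypothèse galoisienne, fournit un énoncé réutilisé ensuite à plusieurs reprises — corollaire~\ref{cor:kfini} pour les corps finis, corollaire sur les corps pseudo-algébriquement clos parfaits via les sous-groupes de Sylow — que votre invariant taillé sur mesure ne donnerait pas directement. Deux points seulement mériteraient d'être explicités dans votre rédaction: la continuité de l'action de~$G_k$ sur l'ensemble fini des composantes (elle se factorise par un quotient fini, ce qui justifie que toute orbite non ponctuelle a pour cardinal une puissance de~$p$), et le cas trivial $X=\emptyset$ dans la vérification de~(2), où $\chi=0$.
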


\begin{cor}
\label{cor:axconj}
Soit~$p$ un nombre premier.
Tout corps pseudo-algébriquement clos parfait dont le groupe de Galois absolu est un pro-$p$\nobreakdash-groupe est un corps~$C_1$.
\end{cor}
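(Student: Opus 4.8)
The plan is to obtain the corollary as a short deduction from Théorème~\ref{th:axconj}, bringing in perfectness and the pseudo-algebraically closed hypothesis only at the last step. Recall that, in Lang's sense, a field~$k$ is~$C_1$ precisely when every hypersurface $X \subset \P^n_k$ of degree $d \leq n$ admits a $k$-rational point, equivalently when every homogeneous form of degree~$d$ in $n+1 > d$ variables has a nontrivial zero over~$k$. I would therefore fix an arbitrary such hypersurface~$X$ and try to exhibit a point of $X(k)$.

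First I would apply Théorème~\ref{th:axconj}: as the absolute Galois group of~$k$ is a pro-$p$-group and $d \leq n$, it provides a closed subset $W \subset X$ which is geometrically irreducible over~$k$. Replacing~$W$ by its reduced closed subscheme changes neither its underlying topological space nor that of $W \times_k \bar k$, so geometric irreducibility is preserved. Since~$k$ is perfect, every reduced $k$-scheme of finite type is geometrically reduced; hence the reduced scheme~$W$ is simultaneously geometrically reduced and geometrically irreducible, that is, geometrically integral. Because~$k$ is pseudo-algebraically closed, the geometrically integral $k$-scheme of finite type~$W$ carries a $k$-rational point, and this point lies on~$X$. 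Thus $X(k) \neq \emptyset$, and as~$X$ ranged over all hypersurfaces of degree $d \leq n$ in projective space over~$k$, the field~$k$ is~$C_1$.

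Granting Théorème~\ref{th:axconj}, essentially no difficulty remains: the whole geometric content sits in the existence of the geometrically irreducible subset~$W$. The one step deserving attention is the passage from geometric irreducibility to geometric integrality, and it is exactly here that perfectness of~$k$ intervenes, guaranteeing that the reduced subscheme~$W$ remains reduced after base change to~$\bar k$ before the defining property of pseudo-algebraically closed fields can be invoked.
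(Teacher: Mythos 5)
Your deduction is correct and coincides with the one the paper intends (the corollary is stated immediately after the théorème~\ref{th:axconj} with the proof left implicit): take the geometrically irreducible closed subset~$W$ given by the theorem, use perfectness of~$k$ to pass from geometric irreducibility of~$W_\red$ to geometric integrality, and invoke the pseudo-algebraically closed hypothesis to produce a rational point. Nothing is missing; in particular you correctly isolate the role of perfectness, which is exactly where it enters.
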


Nous allons déduire le théorème~\ref{th:axconj} de la proposition suivante.
Si~$k$ est un corps et~$W$ un $k$\nobreakdash-schéma intègre, notons $k_W$ la fermeture algébrique de~$k$ dans~$k(W)$.
Si~$X$ est un $k$\nobreakdash-schéma de type fini, notons~$i_X$ le pgcd des degrés~$[k_W:k]$
lorsque~$W$ parcourt l'ensemble des sous-schémas fermés intègres de~$X$.

\begin{prop}
\label{prop:indiceirred}
Pour tout corps~$k$, tout schéma~$X$ propre sur~$k$ et tout faisceau cohérent~$E$ sur~$X$,
l'entier~$i_X$ divise $\chi(X,E)$.
\end{prop}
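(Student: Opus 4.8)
The plan is to deduce this from the dévissage principle, Proposition~\ref{prop:devissage}, applied with $n_X=i_X$ (setting $i_X=0$ when $X=\emptyset$) and with~$\PP$ the property for a proper $k$-scheme of being integral and normal. The conclusion of Proposition~\ref{prop:devissage} is then precisely the assertion to be proved, so the whole work consists in verifying its three hypotheses.

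For hypothesis~(1), given a morphism $g:Y\to X$ of proper $k$-schemes and an integral closed subscheme $W\subseteq Y$, I would let $W'\subseteq X$ be the integral closed subscheme supported on $\overline{g(W)}$ with its reduced structure. The induced dominant morphism $W\to W'$ yields a $k$-embedding $k(W')\hookrightarrow k(W)$, which carries $k_{W'}$ into $k_W$; as these are finite extensions of~$k$, it follows that $[k_{W'}:k]$ divides $[k_W:k]$. Since $i_X$ divides $[k_{W'}:k]$ by definition, it divides $[k_W:k]$ for every such~$W$, hence divides $i_Y$.

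The heart of the argument is hypothesis~(2). For $X$ integral and normal I would first identify $H^0(X,\sO_X)$ with $k_X$: the global sections form a finite field extension of~$k$ contained in $k(X)$, and conversely every element of~$k_X$, being integral over~$k$ and hence over~$\sO_X$, lies in $\sO_X$ by normality and so is a global section. Consequently each group $H^i(X,\sO_X)$ is a vector space over the field $k_X=H^0(X,\sO_X)$, so that $[k_X:k]$ divides $\dim_k H^i(X,\sO_X)$ for every~$i$ and therefore divides $\chi(X,\sO_X)$. As $i_X$ divides $[k_X:k]$ (take $W=X$ in the definition of~$i_X$), hypothesis~(2) follows.

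Finally, for hypothesis~(3) I would simply take $f:Y\to X$ to be the normalisation $\nu:\widetilde{X}\to X$ of an integral proper~$X$. Being of finite type over a field, $X$ is excellent, so $\nu$ is finite; thus $\widetilde{X}$ is proper, integral and normal, hence satisfies~$\PP$. Moreover $\nu$ is birational, so by Remarque~\ref{rq:devissage}(i) one has $\chi(Y_\eta,\sO_{Y_\eta})=\deg(\nu)=1$, which is coprime to~$i_X$. The three hypotheses being met, Proposition~\ref{prop:devissage} gives the result. The one genuinely non-formal step is~(2): the identification $H^0(X,\sO_X)=k_X$ for normal~$X$ and the resulting $k_X$-vector space structure on the higher cohomology; the verifications of~(1) and~(3) are essentially routine.
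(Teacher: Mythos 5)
Your proof is correct and follows essentially the same route as the paper: the author applies Proposition~\ref{prop:devissage} with $n_X=i_X$ and $\PP$ the property of being irreducible and normal, verifies~(3) via the normalisation morphism, and establishes~(2) exactly as you do, by observing that a normal proper irreducible $X$ is naturally a $k_X$\nobreakdash-scheme so that the formula $\dim_k(V)=[k_X:k]\dim_{k_X}(V)$ forces $[k_X:k]$ to divide $\chi(X,\sO_X)$. The only difference is that you spell out the verifications of~(1) and~(3) and the identification $H^0(X,\sO_X)=k_X$, which the paper dismisses as evident.
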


\begin{proof}
Fixons le corps~$k$.
Si~$X$ est un $k$\nobreakdash-schéma propre,
posons $n_X=i_X$ et notons~$\PP$ la propriété, pour~$X$, d'être irréductible et normal.
La propriété~(1) de la proposition~\ref{prop:devissage} est évidente.
La propriété~(3) l'est aussi: prendre pour~$f$ le morphisme de normalisation.
Considérons la propriété~(2).
Soit~$X$ un $k$\nobreakdash-schéma propre, irréductible et normal.
Comme le $k$\nobreakdash-schéma~$X$ est normal, il possède naturellement une structure de $k_X$\nobreakdash-schéma.
La formule
\begin{align}
\label{eq:multiplicativitedim}
\dim_k(V)=[k_X:k]\dim_{k_X}(V)\rlap{\text{,}}
\end{align}
valable pour tout $k_X$\nobreakdash-espace vectoriel~$V$ de dimension finie,
entraîne que $[k_X:k]$ divise $\chi(X,\sO_X)$.
Par conséquent~$i_X$ divise bien $\chi(X,\sO_X)$.
La proposition~\ref{prop:devissage} permet de conclure.
\end{proof}

\begin{proof}[Démonstration du théorème~\ref{th:axconj}]
Soit~$k$ un corps.
Soit $X \subset \P^n_k$ une hypersurface de degré $d \leq n$.
Une telle hypersurface vérifie $\chi(X,\sO_X)=1$.
Il existe donc,
d'après la proposition~\ref{prop:indiceirred},
un fermé irréductible $W \subset X$ tel que le degré de l'extension finie~$k_W/k$ soit premier à~$p$.
Si le groupe de Galois absolu de~$k$ est un pro-$p$\nobreakdash-groupe,
l'extension~$k_W/k$ est alors purement inséparable;
de façon équivalente, le fermé~$W$
est géométriquement irréductible sur~$k$
(cf.~\cite[Proposition~4.5.9]{ega42}).
\end{proof}

\begin{cor}
Soit~$k$ un corps pseudo-algébriquement clos parfait.
Toute hypersurface $X \subset \P^n_k$ de degré $d\leq n$ est d'indice~$1$.
\end{cor}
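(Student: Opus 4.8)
The plan is to derive this from Proposition~\ref{prop:indiceirred}, which has just been proved, combined with the defining property of pseudo-algebraically closed fields. The mechanism will be to replace each integral closed subscheme $W \subset X$ by a closed point of~$X$ of comparable degree, using that a finite extension of a perfect PAC field is again perfect and PAC.

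First I would recall, exactly as in the proof of Theorem~\ref{th:axconj}, that a hypersurface $X \subset \P^n_k$ of degree $d \leq n$ satisfies $\chi(X,\sO_X)=1$, and that $X$ is non-empty. Proposition~\ref{prop:indiceirred} applied to $E=\sO_X$ then shows that $i_X$ divides~$1$, hence $i_X=1$. Concretely, this means that the degrees $[k_W:k]$, as $W$ ranges over the integral closed subschemes of~$X$, have greatest common divisor~$1$; it remains to turn this information about the fields~$k_W$ into information about the index, i.e.\ about honest closed points.

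To that end I would fix an integral closed subscheme $W \subset X$ and set $k'=k_W$. Being a finite extension of the perfect PAC field~$k$, the field~$k'$ is itself perfect and PAC (cf.~\cite{friedjarden}). I would then pass to the normalisation $W' \to W$, a proper morphism with $k(W')=k(W)$, so that $k_{W'}=k'$; since $W'$ is normal, integral and proper over~$k$, one has $H^0(W',\sO_{W'})=k'$, which endows~$W'$ with a canonical structure of $k'$-scheme. Because $k'$ is algebraically closed in $k(W')$ and $k'$ is perfect, the $k'$-scheme~$W'$ is geometrically integral. As $k'$ is PAC, $W'$ then possesses a $k'$-rational point, whose image in~$X$ is a closed point~$x$ with $[k(x):k]$ dividing $[k':k]=[k_W:k]$. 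Consequently the index of~$X$, being the gcd of the degrees of its closed points, divides $[k_W:k]$; and since~$W$ was arbitrary, it divides $i_X=1$, so the index of~$X$ equals~$1$.

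The only non-formal inputs are the stability of the perfect-and-PAC condition under finite extensions and the geometric integrality of~$W'$ over~$k'$, and I expect the latter to be the point deserving the most care: it is precisely here that perfectness of~$k$ (and hence of each~$k'$) is used, both to upgrade $k'$ being algebraically closed in $k(W')$ to geometric irreducibility and to guarantee geometric reducedness. Once these two facts are in hand, the argument is purely formal, the essential content having already been supplied by Proposition~\ref{prop:indiceirred}.
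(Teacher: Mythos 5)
Your proof is correct, but it takes a genuinely different route from the paper's. The paper deduces the corollary from Corollaire~\ref{cor:axconj}: for each prime~$p$ it passes to the subfield of~$\bar k$ fixed by a Sylow $p$\nobreakdash-subgroup of $\mathrm{Gal}(\bar k/k)$ --- again perfect and PAC, with pro-$p$ absolute Galois group --- obtains a rational point on~$X$ over that field by the $C_1$ property, descends it to a finite subextension of degree prime to~$p$, and concludes that the index is prime to every~$p$. You instead bypass Théorème~\ref{th:axconj} and the Sylow reduction entirely: you apply Proposition~\ref{prop:indiceirred} directly over~$k$ to get $i_X=1$, and you convert each integral closed subscheme~$W$ into a closed point of degree dividing~$[k_W:k]$, via the normalisation~$W'$, its canonical $k_W$\nobreakdash-structure (indeed $H^0(W',\sO_{W'})=k_W$ by properness and normality), geometric integrality over~$k_W$ (perfectness is exactly what is needed there, both for irreducibility via EGA~IV.4.5.9 and for geometric reducedness), and the PAC property of the finite extension~$k_W$. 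Your mechanism is in fact the exact analogue of the paper's proof of Corollaire~\ref{cor:kfini} for finite fields, with Lang--Weil--Nisnevi\v{c} replaced by the PAC hypothesis; as a byproduct it proves slightly more than the stated corollary, namely that over a perfect PAC field $i_X$ coincides with the index, so that the index of any proper scheme divides $\chi(X,E)$ for every coherent sheaf~$E$ --- a strong-$C_1^0$-type statement --- whereas the paper's argument is shorter given Corollaire~\ref{cor:axconj} but only yields the prime-to-$p$ divisibility one prime at a time. Both arguments ultimately rest on the stability of ``perfect and PAC'' under algebraic extensions; you need it only for finite extensions, while the paper invokes it for the infinite extension fixed by a Sylow subgroup together with a limit argument to descend the rational point.
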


\begin{proof}
Soit~$p$ un nombre premier.
Notons~$\bar k$ une clôture algébrique de~$k$.
D'après le corollaire~\ref{cor:axconj} appliqué
au sous-corps de~$\bar k$ fixe par un $p$\nobreakdash-groupe
de Sylow de $\mathrm{Gal}(\bar k/k)$, l'hypersurface~$X$ acquiert un point rationnel après
une extension des scalaires de degré premier à~$p$.
Par conséquent, l'indice de~$X$ est premier à~$p$.
\end{proof}

Une autre conséquence de la proposition~\ref{prop:indiceirred} est la validité
de
l'énoncé \cite[Theorem~3.1]{elw} non seulement pour le corps~$\C((t))$ mais aussi pour les corps finis.

\begin{cor}
\label{cor:kfini}
Soit~$k$ un corps fini.  Pour tout $k$\nobreakdash-schéma propre~$X$ et tout faisceau cohérent~$E$ sur~$X$,
l'indice de~$X$ sur~$k$ divise $\chi(X,E)$.
\end{cor}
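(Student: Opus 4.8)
La stratégie est de déduire l'énoncé de la proposition~\ref{prop:indiceirred}. Celle-ci fournit la divisibilité $i_X \mid \chi(X,E)$ pour tout faisceau cohérent~$E$; il suffira donc de vérifier que, sur un corps fini, l'indice de~$X$ sur~$k$ divise l'entier~$i_X$. Comme~$i_X$ est le pgcd des $[k_W:k]$ lorsque~$W$ parcourt les sous-schémas fermés intègres de~$X$, je ramènerais cette divisibilité à l'assertion suivante: pour tout tel~$W$, l'indice de~$X$ divise $[k_W:k]$.

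Fixons un sous-schéma fermé intègre $W \subset X$ et posons $m=[k_W:k]$, de sorte que $k_W=\F_{q^m}$ lorsque $k=\F_q$. Soit $\nu:\widetilde W \to W$ la normalisation de~$W$. Le morphisme~$\nu$ est fini, de sorte que~$\widetilde W$ est un $k$\nobreakdash-schéma propre, intègre et normal; comme il a même corps de fonctions que~$W$, on a $k_{\widetilde W}=k_W$. Le corps~$k_W$ étant parfait et algébriquement clos dans $k(\widetilde W)$, et~$\widetilde W$ étant normal, ce dernier possède une structure naturelle de $k_W$\nobreakdash-schéma pour laquelle il est géométriquement intègre. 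Tout point fermé~$y$ de~$\widetilde W$ vérifie alors $\deg_k(y)=m\,[k(y):k_W]$, et son image $\nu(y)$ est un point fermé de~$X$ dont le degré sur~$k$ divise $\deg_k(y)$.

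Le point essentiel --- et le principal obstacle --- consiste à exhiber deux points fermés de~$\widetilde W$ dont les degrés sur~$k_W$ sont premiers entre eux. C'est ici qu'intervient une propriété propre aux corps finis: d'après les estimations de Lang et Weil, une variété géométriquement intègre sur un corps fini acquiert un point rationnel après toute extension finie de degré assez grand du corps de base. Appliqué à~$\widetilde W$ sur~$k_W$, ceci fournit, pour tout entier~$n$ assez grand, un point fermé de~$\widetilde W$ dont le degré sur~$k_W$ divise~$n$. En choisissant deux entiers consécutifs~$n$ et~$n+1$ assez grands, on obtient deux points fermés $y_1,y_2$ de~$\widetilde W$, de degrés respectifs $d_1\mid n$ et $d_2\mid n+1$ sur~$k_W$; ces degrés sont premiers entre eux puisque~$n$ et~$n+1$ le sont.

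Les points $x_i=\nu(y_i)$ sont alors des points fermés de~$X$ tels que $\deg_k(x_i)$ divise $m\,d_i$. Comme l'indice de~$X$ divise le degré de tout point fermé de~$X$, il divise $\deg_k(x_1)$ et $\deg_k(x_2)$, donc divise $\gcd(m\,d_1,\,m\,d_2)=m\,\gcd(d_1,d_2)=m$. Ainsi l'indice de~$X$ divise $[k_W:k]$ pour tout sous-schéma fermé intègre~$W \subset X$, donc divise~$i_X$, et partant~$\chi(X,E)$ d'après la proposition~\ref{prop:indiceirred}.
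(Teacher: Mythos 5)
Votre démonstration est correcte et suit essentiellement la même voie que l'article: dans les deux cas on se ramène à la proposition~\ref{prop:indiceirred} en montrant, via Lang--Weil sur le corps fini~$k_W$, que l'indice de~$X$ divise $[k_W:k]$ pour tout fermé intègre $W \subset X$. Les seules variantes sont mineures: vous passez par la normalisation de~$W$ (là où l'article prend un ouvert dense de~$W$ géométriquement irréductible sur~$k_W$) et vous déduisez l'indice~$1$ sur~$k_W$ de deux points fermés de degrés consécutifs, là où l'article invoque directement Lang--Weil--Nisnevi\v{c}.
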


\begin{proof}
Lorsque~$k$ est un corps fini, l'entier~$i_X$ coïncide avec l'indice de~$X$ sur~$k$.
En effet, tout sous-schéma fermé intègre~$W$ de~$X$
contient un ouvert dense géométriquement irréductible sur~$k_W$.
Un tel ouvert est d'indice~$1$ sur le corps fini~$k_W$
d'après
Lang--Weil--Nisnevi\v{c} (cf.~\cite{langweil}, \cite{nisnevic});
l'indice de~$X$ sur~$k$ divise donc~$[k_W:k]$.
\end{proof}

Nous nous servirons du corollaire~\ref{cor:kfini} aux \textsection\ref{sec:transition} à~\ref{sec:cdn}.

\section{Théorème de transition pour la propriété \texorpdfstring{$C_1^q$}{C\_1\textasciicircum{}q} forte}
\label{sec:transition}

\begin{definition}
\label{def:c1forte}
Soit $q\geq 0$ un entier.  Soit~$\ell$ un nombre premier.
Un corps~$k$ vérifie la propriété~$C_1^q$ \emph{forte} (resp.~la propriété~$C_1^q$ \emph{forte en~$\ell$}, la propriété~$C_1^q$ \emph{forte hors de~$\ell$}) si pour toute extension finie $k'/k$,
tout $k'$\nobreakdash-schéma propre~$X$ et tout faisceau cohérent~$E$ sur~$X$,
la caractéristique d'Euler--Poincaré $\chi(X,E)=\sum (-1)^i \dim_{k'} H^i(X,E)$
annule le groupe abélien $K_q(k')/N_q(X/k')$ (resp.~annule son sous-groupe de torsion $\ell$\nobreakdash-primaire, son sous-groupe de torsion première à~$\ell$).
\end{definition}

Kato et Kuzumaki~\cite[Theorem~1]{katokuzumaki} ont démontré que si~$R$ est un anneau de valuation discrète hensélien excellent dont le corps résiduel~$k$
vérifie la propriété~$C_0^q$ pour un entier $q\geq 1$ et si~$d$ est un nombre \emph{premier} inversible dans~$R$,
alors la propriété $C_1^{q-1}(d)$ pour~$k$ implique la propriété~$C_1^q(d)$
pour le corps des fractions de~$R$, où~$C_1^{q-1}(d)$ et~$C_1^q(d)$ désignent les propriétés~$C_1^{q-1}$ et~$C_1^q$ restreintes aux hypersurfaces de degré~$d$.
C'est une question ouverte de savoir si le même énoncé reste vrai sans l'hypothèse que~$d$ est premier.
Nous montrons dans ce paragraphe que la situation est plus favorable
pour la propriété~$C_1^q$ \emph{forte} hors de la caractéristique de~$k$: elle vérifie
un théorème de transition sans restriction sur les schémas considérés.

\begin{thm}
\label{th:transition}
Soit~$R$ un anneau de valuation discrète hensélien excellent de corps des fractions~$K$, de corps résiduel~$k$.
Soit~$\ell$ un nombre premier inversible dans~$R$.
Soit $q \geq 1$ un entier.  Si~$k$ vérifie la propriété $C_1^{q-1}$ forte en~$\ell$
et la propriété $C_0^q$ en~$\ell$, alors~$K$ vérifie
la propriété $C_1^q$ forte en~$\ell$.
\end{thm}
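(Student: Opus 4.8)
Le plan est de déduire le théorème du principe de dévissage (proposition~\ref{prop:devissage}), sur le modèle de l'exemple~\ref{exemple:devissage} mais en gardant trace de la $K$-théorie de Milnor et non du seul indice. Je commencerais par me ramener au cas $K'=K$: pour une extension finie $K'/K$, la clôture intégrale $R'$ de~$R$ dans~$K'$ est un anneau de valuation discrète hensélien excellent, $\ell$ y est inversible et son corps résiduel~$k'$ est une extension finie de~$k$; comme les hypothèses \og{}$C_1^{q-1}$ forte en~$\ell$\fg{} et \og{}$C_0^q$ en~$\ell$\fg{} sont stables par extension finie, le triplet $(R',K',k')$ vérifie les hypothèses du théorème. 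Il suffit donc de démontrer, pour un tel triplet, que $\chi(X,E)$ annule le sous-groupe de torsion $\ell$-primaire de $K_q(K)/N_q(X/K)$ pour tout $K$-schéma propre~$X$ et tout faisceau cohérent~$E$.

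Fixons~$K$ et appliquons la proposition~\ref{prop:devissage}. Pour un $K$-schéma propre non vide~$X$, posons $n_X$ égal à l'exposant du sous-groupe de torsion $\ell$-primaire de $K_q(K)/N_q(X/K)$ — une puissance de~$\ell$, finie car ce quotient est annulé par l'indice de~$X$ (cf.~\cite[Remark~7.3.1]{gilleszamuely}) — et $n_\emptyset=0$; soit~$\PP$ la propriété d'admettre un modèle régulier, propre et plat sur~$R$ à fibre spéciale un diviseur à croisements normaux. La condition~(1) est vérifiée: pour un morphisme $Y\to X$ on a $N_q(Y/K)\subseteq N_q(X/K)$, et comme $K_q(K)/N_q(Y/K)$ est un groupe de torsion, la partie $\ell$-primaire de son quotient $K_q(K)/N_q(X/K)$ est d'exposant divisant~$n_Y$, d'où $n_X\mid n_Y$. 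La condition~(3) résulte du théorème d'altération première à~$\ell$ de Gabber et de~Jong (\cite[Theorem~1.4]{IllusieGabber}): un $K$-schéma propre et intègre~$X$ admet une altération génériquement finie et dominante $f\colon Y\to X$ de degré premier à~$\ell$ avec~$Y$ vérifiant~$\PP$, de sorte que $\chi(Y_\eta,\sO_{Y_\eta})=\deg(f)$ est premier à~$n_X$ (remarque~\ref{rq:devissage}(i)). La conclusion de la proposition~\ref{prop:devissage} est alors exactement l'énoncé cherché.

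Le cœur de la preuve est la condition~(2): pour un~$X$ intègre muni d'un modèle $\sX/R$ comme ci-dessus, $\chi(X,\sO_X)$ doit annuler le sous-groupe de torsion $\ell$-primaire de $K_q(K)/N_q(X/K)$. Comme $\sX/R$ est plat et propre, la caractéristique d'Euler--Poincaré est constante sur~$\Spec R$, d'où $\chi(X,\sO_X)=\chi(\sX_0,\sO_{\sX_0})$, où~$\sX_0$ est la fibre spéciale. Puisque~$\ell$ est inversible dans~$R$, le groupe $1+\mathfrak{m}$ est $\ell$-divisible, donc $U^1K_q(K)$ aussi, et le symbole modéré joint à une uniformisante identifie la torsion $\ell$-primaire de $K_q(K)$, et celle du conoyau $K_q(K)/N_q(X/K)$, à celle d'une somme directe gouvernée par~$K_q(k)$ (la partie non ramifiée) et par~$K_{q-1}(k)$ (la partie $\{\pi\}\cdot K_{q-1}(k)$). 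Il suffit donc d'annuler ces deux morceaux. Pour le morceau en~$K_{q-1}(k)$, j'utiliserais que le symbole modéré envoie $N_q(X/K)$ sur un sous-groupe contenant $N_{q-1}(\sX_0/k)$: par régularité de~$\sX$ et hensélianité de~$R$, un point fermé de~$\sX_0$ se relève en une courbe horizontale fermée dont le point générique est un point fermé de~$X$, et le symbole modéré de la norme correspondante calcule la norme issue du corps résiduel du point de~$\sX_0$; la propriété $C_1^{q-1}$ forte en~$\ell$ pour~$k$, appliquée au $k$-schéma propre~$\sX_0$ avec $E=\sO$, montre alors que $\chi(\sX_0,\sO_{\sX_0})$ annule ce morceau. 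Pour le morceau non ramifié en~$K_q(k)$, la spécialisation envoie de même $N_q(X/K)$ dans un sous-groupe contenant les normes non ramifiées issues des points fermés de~$\sX_0$, et la propriété $C_0^q$ en~$\ell$ pour~$k$ en assure la trivialité $\ell$-primaire. En combinant, $\chi(X,\sO_X)=\chi(\sX_0,\sO_{\sX_0})$ annule $K_q(K)/N_q(X/K)$ en sa partie $\ell$-primaire.

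La difficulté principale est précisément cette comptabilité des normes: établir que le symbole modéré envoie $N_q(X/K)$ sur un sous-groupe contenant $N_{q-1}(\sX_0/k)$ et que la spécialisation l'envoie dans un sous-groupe contenant les normes non ramifiées, en relevant les points fermés de la fibre spéciale en des points fermés de~$X$ et en calculant le symbole modéré et la spécialisation de leurs applications norme, tout en contrôlant modulo~$\ell$ les indices de ramification — c'est-à-dire les multiplicités des composantes de~$\sX_0$. C'est là que les hypothèses hensélienne et première à~$\ell$ sont essentielles et que la structure de la $K$-théorie de Milnor des corps locaux intervient.
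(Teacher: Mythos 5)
Votre architecture générale est bien celle du texte: réduction au cas $K'=K$, dévissage via la proposition~\ref{prop:devissage} avec pour~$n_X$ la partie $\ell$\nobreakdash-primaire de l'exposant de $K_q(K)/N_q(X/K)$, propriété~(3) par le théorème de Gabber et de~Jong, puis découpage de $K_q(K)/N_q(X/K)$ par l'application résidu en un morceau \og{}unités\fg{} et un morceau en~$K_{q-1}(k)$. Mais votre traitement du morceau non ramifié est erroné, et c'est là que se joue toute la démonstration. Vous affirmez que la propriété $C_0^q$ en~$\ell$ assure la trivialité $\ell$\nobreakdash-primaire de $U_q(K)/(U_q(K)\cap N_q(X/K))$: c'est faux dès que la fibre spéciale $Y=\sX\otimes_Rk$ a une multiplicité~$m$ divisible par~$\ell$. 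Sur le quotient $U_q(K)/U^1_q(K)=K_q(k)$, la norme depuis~$K(x)$ agit comme $e_x N_{k_x/k}$; la propriété~$C_0^q$ tue le facteur $N_{k_x/k}$ mais pas le facteur~$e_x$, et tout point fermé $x\in X$ vérifie $m\mid e_x$ (la clôture de~$x$ dans~$\sX$ tire en arrière le diviseur $Y=mD$), de sorte que pour $m>1$ il n'existe aucune norme \og{}non ramifiée\fg{} issue d'un point fermé de~$X$. Exemple concret avec $q=1$, $\mu_\ell\subset K$, $X=\Spec K(\pi^{1/\ell})$ et le modèle régulier $\Spec R[t]/(t^\ell-\pi)$: le morceau unités de $K^*/N_1(X/K)$ a une torsion $\ell$\nobreakdash-primaire non triviale, alors que votre argument conclurait qu'elle est nulle. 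Le lemme~\ref{lem:multipliciteU} du texte montre seulement que ce morceau est annulé par~$m$ (et c'est optimal); votre décomposition fournit alors l'annulateur $m\,\chi(\sX_0,\sO_{\sX_0})=m\,\chi(X,\sO_X)$, trop grand d'un facteur~$m$. La correction consiste précisément à répartir $\chi(X,\sO_X)$ entre les deux morceaux: on applique la propriété $C_1^{q-1}$ forte en~$\ell$ au diviseur réduit d'épaisseur un, c'est-à-dire à~$D$ et non à la fibre $Y$ tout entière — ce qui est licite car $D_\red=Y_\red$ entraîne $N_{q-1}(D/k)=N_{q-1}(Y/k)$ — puis on conclut grâce à l'identité non triviale $\chi(X,\sO_X)=m\,\chi(D,\sO_D)$ (Snapper--Kleiman, \cite[Proposition~2.4]{elw}), que votre preuve ne mentionne pas et dont elle a impérativement besoin.

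Second point, plus secondaire mais réel: votre relèvement des points fermés de~$\sX_0$ en points fermés de~$X$ à extension résiduelle prescrite (nécessaire pour l'inclusion $N_{q-1}(Y/k)\subset\partial(N_q(X/K))$) n'est établi dans le texte que pour~$k$ infini (lemme~\ref{lem:geometrique}: après éclatement en~$y$, il faut un point rationnel de l'espace projectif exceptionnel de corps résiduel~$k(y)$ évitant les autres composantes de la fibre spéciale, ce qui peut échouer si~$k(y)$ est fini). Le cas~$k$ fini — indispensable pour l'application à~$\Qp$ — requiert l'argument supplémentaire de passage à la limite le long d'une tour d'extensions non ramifiées de degrés premiers à un entier convenable (seconde moitié de la preuve du lemme~\ref{lem:partialNq}), que votre esquisse ne prévoit pas. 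Enfin, l'identification de la torsion $\ell$\nobreakdash-primaire de $K_q(K)/N_q(X/K)$ à celle d'une \emph{somme directe} est abusive, $N_q(X/K)$ ne se décomposant pas selon le choix d'une uniformisante; la suite exacte~\eqref{se:decoupe} suffit toutefois, l'annulation du sous-groupe et du quotient donnant celle du terme médian par le produit des deux annulateurs — ce qui rend d'autant plus crucial le point du premier paragraphe.
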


\begin{remarques}
\label{rem:transition}
(i)
Par définition, le corps~$k$ vérifie la propriété~$C_0^q$ en~$\ell$ si et seulement si pour toute extension finie~$k'/k$
et toute extension finie~$k''/k'$, la $\ell$\nobreakdash-torsion du groupe $K_q(k')/N_{k''/k'}(K_q(k''))$ est nulle.
Selon la conjecture de Bloch--Kato, établie par Rost et Voevodsky, cette propriété est satisfaite,
pour~$\ell$ inversible dans~$k$,
si et seulement si $\cd_\ell(k)\leq q$ (cf.~\cite[Lemma~7]{katokuzumaki}).

(ii)
Indépendamment de ce résultat, Kato~\cite[\textsection3.3]{katogeneralizationlcft} avait démontré que si~$k$ vérifie la propriété~$C_0^q$ en~$\ell$, alors~$K$ vérifie la propriété~$C_0^{q+1}$ en~$\ell$.
Ainsi, les hypothèses du théorème~\ref{th:transition} se transmettent de~$k$ à~$K$:
si~$k$ vérifie les propriétés~$C_1^{q-1}$ forte en~$\ell$ et~$C_0^q$ en~$\ell$,
alors~$K$ vérifie les propriétés~$C_1^q$ forte en~$\ell$ et~$C_0^{q+1}$ en~$\ell$.
\end{remarques}

\begin{proof}[Démonstration du théorème~\ref{th:transition}]
Les hypothèses du théorème portant sur~$R$ et sur~$k$ restant satisfaites si l'on remplace~$K$ par une extension finie~$K'$ et~$R$ par sa fermeture intégrale dans~$K'$
(cf.~\cite[Chapitre~I, \textsection4, Proposition~9]{serrecorpslocaux}, \cite[Scholie~7.8.3~(vi)]{ega42},
\cite[Proposition~18.5.9~(ii)]{ega44}),
il suffit de démontrer que pour tout $K$\nobreakdash-schéma propre~$X$ et tout faisceau cohérent~$E$ sur~$X$,
le sous-groupe de torsion $\ell$\nobreakdash-primaire de $K_q(K)/N_q(X/K)$ est annulé par~$\chi(X,E)$.

Pour tout $K$\nobreakdash-schéma propre~$X$, notons~$n_X$ la plus grande puissance de~$\ell$
divisant l'exposant du groupe $K_q(K)/N_q(X/K)$ si~$X$ est non vide ou~$0$ si~$X$ est vide,
et convenons que~$X$ possède la propriété~$\PP$
s'il existe un $R$\nobreakdash-schéma irréductible, régulier, propre et plat de fibre générique isomorphe à~$X$.
Il suffit, pour établir le théorème, de vérifier les hypothèses~(1), (2), (3) de la proposition~\ref{prop:devissage}.
La~propriété~(1) est évidente.
Comme dans l'exemple~\ref{exemple:devissage}, la propriété~(3) est satisfaite
grâce au théorème de Gabber et de~Jong \cite[Theorem~1.4]{IllusieGabber},
applicable parce que~$\ell$ est inversible dans~$R$.
Reste la propriété~(2).
Pour la vérifier, fixons un $R$\nobreakdash-schéma~$\sX$ irréductible, régulier, propre et plat, posons $X =\sX\otimes_RK$ et montrons que~$n_X$ divise $\chi(X,\sO_X)$.

Rappelons que l'on dispose d'une application résidu $\partial:K_q(K) \to K_{q-1}(k)$. Celle-ci est surjective et son noyau est le sous-groupe
$U_q(K) \subset K_q(K)$ engendré par les symboles $\{u_1,\dots,u_q\}$ où les~$u_i$ sont des éléments de~$R^*$
(cf.~\cite[Proposition~7.1.7]{gilleszamuely}).  L'application~$\partial$ induit une suite exacte de groupes abéliens d'exposant fini
\begin{align}
\label{se:decoupe}
\xymatrix@C=2em{
0 \ar[r] & \displaystyle\frac{U_q(K)}{U_q(K) \cap N_q(X/K)} \ar[r] & \displaystyle\frac{K_q(K)}{N_q(X/K)} \ar[r]^(.46)\partial & \displaystyle\frac{K_{q-1}(k)}{\partial(N_q(X/K))} \ar[r] & 0\rlap{\text{.}}
}
\end{align}

Notons $Y = \sX \otimes_R k$ la fibre spéciale de~$\sX$.  La \emph{multiplicité} de~$Y$ est le plus grand entier~$m$ tel que~$Y$ soit divisible par~$m$
en tant que diviseur sur~$\sX$.

\begin{lem}
\label{lem:multipliciteU}
La multiplicité de~$Y$ annule le sous-groupe de torsion $\ell$\nobreakdash-primaire du groupe
abélien $U_q(K)/(U_q(K)\cap N_q(X/K))$.
\end{lem}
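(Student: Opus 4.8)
The plan is to exploit the specialization homomorphism on symbols of units together with the behaviour of norms under reduction to the special fibre, the multiplicity entering through ramification indices. First I would recall that reducing the entries of a symbol $\{u_1,\dots,u_q\}$ with $u_i\in R^*$ modulo the maximal ideal yields a well-defined surjective homomorphism $\rho\colon U_q(K)\to K_q(k)$, compatible with finite extensions in the following sense. Let $x$ be a point fermé of~$X$ and let $Z\subset\sX$ be son adhérence; comme~$\sX$ est propre et plat sur~$R$, le schéma~$Z$ est fini sur~$R$, local (car~$R$ est hensélien) et de dimension~$1$, de sorte que sa normalisée (finie, par excellence de~$R$) est un trait $\Spec\tilde A$ de corps des fractions~$K(x)$, d'indice de ramification~$e_x$ sur~$R$ et de corps résiduel~$\kappa_x$. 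The key compatibility to establish is that, on~$U_q(K(x))$, one has
\begin{align}
\rho_K\circ N_{K(x)/K}=e_x\,N_{\kappa_x/k}\circ\rho_{K(x)}\rlap{\text{;}}
\end{align}
pour $q=1$ c'est le calcul de la réduction de la norme d'une unité au moyen de la filtration de longueur~$e_x$ de~$\tilde A/\pi\tilde A$, le cas général s'en déduisant par la formule de projection et la description usuelle des applications norme et résidu en $K$-théorie de Milnor.

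The geometric heart of the argument is to realise the multiplicity as a ramification index. Écrivant la fibre spéciale $Y=\sum_i m_iY_i$, de sorte que $m=\gcd_i m_i$, je montrerais que pour chaque~$i$ et chaque point fermé~$y$ situé dans le lieu lisse de~$Y_\red$ sur la seule composante~$Y_i$, la régularité de~$\sX$ et l'hypothèse hensélienne sur~$R$ permettent de relever un paramètre local transverse à~$Y_i$ en une $R$\nobreakdash-courbe régulière $Z\subset\sX$ passant par~$y$, finie sur~$R$, d'indice de ramification $e_x=m_i$ et de corps résiduel~$\kappa(y)$. En injectant de tels points dans la compatibilité ci-dessus, et en utilisant que les symboles d'unités de~$K(x)$ ne proviennent pas nécessairement de~$K$, on obtient l'inclusion
\begin{align}
\rho\bigl(U_q(K)\cap N_q(X/K)\bigr)\supseteq \sum_i m_i\,N_q\bigl(Y_i^{\mathrm{sm}}/k\bigr)\rlap{\text{.}}
\end{align}

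Finally I would combine this with the suite exacte des résidus~\eqref{se:decoupe} and the arithmetic of the $\gcd$: puisque~$m$ divise chaque~$m_i$, le facteur~$m$ contrôle l'image précédente, et, joint à la surjectivité de~$\rho$, au comportement du noyau $V=\ker\rho$ et, le cas échéant, à l'hypothèse que~$k$ vérifie~$C_0^q$ en~$\ell$, permet de conclure que~$m$ annule le sous-groupe de torsion $\ell$\nobreakdash-primaire de $U_q(K)/(U_q(K)\cap N_q(X/K))$. The main obstacle is twofold: prouver la compatibilité norme–spécialisation avec le facteur de ramification exact~$e_x$ (c'est lui qui transforme la multiplicité en annulateur), et ramener la contribution d'un point fermé arbitraire, à réduction éventuellement singulière ou non réduite, à la situation transverse et lisse; les deux points se traitent grâce à la régularité de~$\sX$, aux hypothèses d'excellence et de hensélianité sur~$R$, et à un contrôle soigneux du noyau~$V$, où je m'attends à ce que réside la difficulté délicate de la preuve.
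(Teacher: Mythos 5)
Your outline follows the same skeleton as the paper's proof: the identification $U_q(K)/U^1_q(K)=K_q(k)$ (your specialization map~$\rho$, whose kernel is the subgroup~$U^1_q(K)$ generated by symbols with an entry in $1+\mathfrak{m}$), Kato's compatibility of the norm with this filtration with the factor~$e$, the hypothesis~$C_0^q$ en~$\ell$ on~$k$, the $\ell$\nobreakdash-divisibility of $1+\mathfrak{m}$ (hence of the kernel, harmless because~$\ell$ is invertible dans~$R$), and the realization of each multiplicity~$m_i$ as a ramification index via a trait through a point of the smooth locus of~$Y_\red$ lying on the single component~$Y_i$. Only the bookkeeping differs: the paper shows that for \emph{every} closed point $x\in X$ the integer~$e_x$ annihilates $\big(U_q(K)/N_{K(x)/K}(U_q(K(x)))\big)\otimes_\Z\Zl$, so that the group of the lemma is killed $\ell$\nobreakdash-adically by $\gcd_x e_x$, and then quotes \cite[\textsection9.1, Corollary~9 et Lemma~4]{BLR} for the purely geometric fact that this gcd divides~$m$ à une puissance de~$p$ près; you instead push the norm groups into $K_q(k)$ via the inclusion $\rho\big(U_q(K)\cap N_q(X/K)\big)\supseteq\sum_i m_iN_q(Y_i^{\mathrm{sm}}/k)$. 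Your final \og{}gcd arithmetic\fg{}, left implicit, does close, but should be spelled out: by~$C_0^q$ en~$\ell$, each quotient $K_q(k)/N_q(Y_i^{\mathrm{sm}}/k)$ is killed by an integer~$n_i$ prime to~$\ell$ (the prime-to-$\ell$ part of the degree of one closed point of~$Y_i^{\mathrm{sm}}$), whence $m_in_iK_q(k)\subseteq\rho\big(U_q(K)\cap N_q(X/K)\big)$ for all~$i$, so this image contains $\gcd_i(m_in_i)\mkern2mu K_q(k)$, and $v_\ell(\gcd_i(m_in_i))=v_\ell(\gcd_i m_i)=v_\ell(m)$; the $\ell$\nobreakdash-divisible image of the kernel, having finite exponent, has trivial $\ell$\nobreakdash-torsion, so the $\ell$\nobreakdash-primary torsion of $U_q(K)/(U_q(K)\cap N_q(X/K))$ injects into that of $K_q(k)/\rho(\cdots)$ and is killed by~$m$.

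Two genuine repairs are needed. First, the lemma allows~$k$ imperfect, and then $Y_\red$ can be nowhere smooth over~$k$: your trait construction through smooth points may produce no points at all, so the displayed inclusion can be vacuous for some components. You flag \og{}réduction éventuellement singulière\fg{} as a difficulty but give no mechanism; the paper's appeal to \cite[\textsection9.1, Lemma~4]{BLR} fixes exactly this, yielding the divisibility of the gcd of ramification indices by~$m$ only up to a power of $p=\mathrm{char}(k)$, which is harmless for $\ell$\nobreakdash-primary torsion since~$\ell$ est inversible dans~$R$ — this caveat must be incorporated into your statement. Second, your claim that the compatibility $\rho_K\circ N_{K(x)/K}=e_x\mkern2mu N_{\kappa_x/k}\circ\rho_{K(x)}$ for general~$q$ follows from the case $q=1$ \og{}par la formule de projection\fg{} is too quick: the projection formula treats symbols with $q-1$ entries coming from~$K$, and $U_q(K(x))$ is not obviously generated by such symbols (one needs a generation lemma, valid for instance when the residue extension is separable, plus a dévissage). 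This is a nontrivial result of Kato, which the paper simply cites (\cite[\textsection3.3]{katogeneralizationlcft}); you should do the same rather than claim a short derivation.
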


\begin{proof}
Notons~$\mathfrak{m}$ l'idéal maximal de~$R$.
Rappelons que le sous-groupe $U^1_q(K) \subset K_q(K)$ engendré par les symboles
$\{x_1,\dots,x_q\}$ où $x_1,\dots,x_q \in K^*$ sont tels que $x_1 \in 1+\mathfrak{m}$
est contenu dans~$U_q(K)$ et que l'on dispose d'un isomorphisme canonique $U_q(K)/U^1_q(K)=K_q(k)$ (\emph{loc.\ cit.}).
De plus,
si $K'/K$ est une extension finie,
la norme $N_{K'/K}:K_q(K')\to K_q(K)$ envoie $U_q(K')$ dans $U_q(K)$ et $U^1_q(K')$ dans $U^1_q(K)$
et l'application induite
$U_q(K')/U^1_q(K') \to U_q(K)/U^1_q(K)$
s'identifie à $eN_{k'/k}:K_q(k')\to K_q(k)$, où~$k'$ désigne le corps résiduel de~$K'$
et~$e$ l'indice de ramification de~$K'/K$
(cf. \cite[\textsection3.3]{katogeneralizationlcft}).
Comme~$k$ vérifie par hypothèse la propriété~$C_0^q$ en~$\ell$, la torsion $\ell$\nobreakdash-primaire du groupe $K_q(k)/eN_{k'/k}(K_q(k'))$ est annulée par~$e$.
D'autre part, comme~$\ell$ est inversible dans~$R$, le groupe $1+\mathfrak{m}$, et donc le groupe~$U^1_q(K)$, est divisible par~$\ell$.
Vu la suite exacte
\begin{align}
\label{se:u1uk}
\xymatrix@C=2em{
U^1_q(K) \ar[r] & U_q(K)/N_{K'/K}(U_q(K')) \ar[r] & K_q(k)/eN_{k'/k}(K_q(k')) \ar[r] & 0\rlap{\text{,}}
}
\end{align}
dont les deux termes de droite sont d'exposant fini, il s'ensuit que les exposants des groupes $U_q(K)/N_{K'/K}(U_q(K'))$ et
$K_q(k)/eN_{k'/k}(K_q(k'))$ ont même valuation $\ell$\nobreakdash-adique.
En conclusion, le groupe $\big(U_q(K)/N_{K'/K}(U_q(K'))\big)\otimes_\Z\Zl$ est annulé par~$e$ pour toute extension finie $K'/K$ d'indice de ramification~$e$.
Si maintenant $K'=K(x)$ pour un point fermé~$x$ de~$X$, alors $N_{K'/K}(U_q(K')) \subset U_q(K) \cap N_q(X/K)$.
Ainsi le groupe $\left(U_q(K)/(U_q(K)\cap N_q(X/K))\right) \otimes_\Z \Zl$
est-il annulé par le pgcd des
indices de ramification des extensions finies $K(x)/K$
lorsque~$x$ parcourt l'ensemble des points fermés de~$X$.
Il suffit donc, pour conclure, de montrer que ce pgcd
divise la multiplicité de~$Y$
à une puissance de~$p$ près, où~$p$ désigne la caractéristique de~$k$.
Mais cela résulte de \cite[\textsection9.1, Corollary~9 et Lemma~4]{BLR}.
\end{proof}

Étudions à présent le groupe apparaissant à droite dans~\eqref{se:decoupe}.

\begin{lem}
\label{lem:partialNq}
On a $\partial(N_q(X/K))=N_{q-1}(Y/k)$.
\end{lem}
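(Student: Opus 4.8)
Le plan est d'établir séparément les deux inclusions $\partial(N_q(X/K)) \subseteq N_{q-1}(Y/k)$ et $N_{q-1}(Y/k) \subseteq \partial(N_q(X/K))$. L'outil principal sera la compatibilité entre applications résidu et applications norme: pour toute extension finie $K'/K$, de corps résiduel~$k'$ relativement à la valuation de~$R$ (laquelle se prolonge de façon unique à~$K'$ puisque~$R$ est hensélien), on a $\partial \circ N_{K'/K} = N_{k'/k} \circ \partial'$, où~$\partial'$ désigne le résidu de~$K'$ (cf.~\cite[\textsection3.3]{katogeneralizationlcft}). On utilisera en outre la surjectivité du résidu $K_q(F) \to K_{q-1}(\kappa)$ pour tout corps~$F$ muni d'une valuation discrète de corps résiduel~$\kappa$, conséquence immédiate de la description du noyau rappelée plus haut.

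Montrons d'abord que $\partial(N_q(X/K)) \subseteq N_{q-1}(Y/k)$. Comme $N_q(X/K)$ est engendré par les sous-groupes $N_{K(x)/K}(K_q(K(x)))$ lorsque~$x$ parcourt les points fermés de~$X$, il suffit de traiter l'image par~$\partial$ d'un tel sous-groupe. Fixons donc un point fermé~$x$ de~$X$; l'extension $K(x)/K$ est finie, et l'on note $R_x$ la fermeture intégrale de~$R$ dans~$K(x)$: c'est un anneau de valuation discrète hensélien de corps résiduel~$\kappa_x$, d'où un résidu $\partial_x : K_q(K(x)) \to K_{q-1}(\kappa_x)$. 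Le point $\Spec K(x) \to X \hookrightarrow \sX$ s'étend, par le critère valuatif de propreté appliqué à $\sX \to \Spec R$, en un morphisme $\Spec R_x \to \sX$; l'image du point fermé de $\Spec R_x$ est un point fermé~$y$ de~$Y$, et l'on obtient une inclusion $k(y) \hookrightarrow \kappa_x$. La compatibilité ci-dessus fournit, pour tout $\alpha \in K_q(K(x))$, l'égalité $\partial(N_{K(x)/K}(\alpha)) = N_{\kappa_x/k}(\partial_x(\alpha))$; comme $N_{\kappa_x/k} = N_{k(y)/k} \circ N_{\kappa_x/k(y)}$, ce résidu appartient à $N_{k(y)/k}(K_{q-1}(k(y))) \subseteq N_{q-1}(Y/k)$. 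L'inclusion annoncée en résulte.

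Pour l'inclusion réciproque, il suffit de vérifier que $N_{k(y)/k}(K_{q-1}(k(y))) \subseteq \partial(N_q(X/K))$ pour tout point fermé~$y$ de~$Y$. Fixons un tel~$y$. Le plan est de tracer par~$y$ une courbe intègre $C \subset \sX$, finie et plate sur~$R$, régulière en son unique point fermé~$y$ et de corps résiduel~$k(y)$ en ce point. Le point générique~$x$ d'une telle courbe est un point fermé de~$X$ pour lequel $R_x = \sO_{C,y}$, de sorte que $\kappa_x = k(y)$; la compatibilité jointe à la surjectivité de $\partial_x : K_q(K(x)) \to K_{q-1}(k(y))$ donne alors $N_{k(y)/k}(K_{q-1}(k(y))) = \partial(N_{K(x)/K}(K_q(K(x)))) \subseteq \partial(N_q(X/K))$, ce qui conclut.

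La construction de la courbe~$C$ est l'étape où intervient de façon essentielle la régularité de~$\sX$, et c'est là le principal obstacle. Dans l'anneau local régulier $\sO_{\sX,y}$, de dimension $\dim(X)+1$, je choisirais une partie $f_1,\dots,f_{\dim X}$ d'un système régulier de paramètres telle que le quotient $A = \sO_{\sX,y}/(f_1,\dots,f_{\dim X})$ soit un anneau de valuation discrète de corps résiduel~$k(y)$ dans lequel l'image d'une uniformisante~$\pi$ de~$R$ reste non nulle. Cette dernière condition revient à demander que la courbe $C = \overline{\{x\}}$, adhérence dans~$\sX$ du point générique~$x$ de $\Spec A$, ne soit pas contenue dans la fibre spéciale $Y = V(\pi)$; elle est assurée pour un choix suffisamment général des~$f_i$, par évitement des idéaux premiers minimaux de~$(\pi)$, puisque~$Y$ est de codimension~$1$ dans~$\sX$ en~$y$. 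La courbe~$C$ ainsi obtenue est intègre et de dimension~$1$; dominant $\Spec R$, elle est plate sur~$R$, et sa fibre spéciale étant finie, elle est même finie sur~$R$. Comme~$R$ est hensélien, $C$ est locale, d'unique point fermé~$y$; l'anneau $A = \sO_{C,y}$ étant un anneau de valuation discrète, il est normal, d'où $R_x = A$ et $\kappa_x = k(y)$, comme voulu.
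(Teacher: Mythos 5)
Votre démonstration suit le même schéma global que celle de l'article (les deux inclusions, la compatibilité norme--résidu, la construction d'une courbe $C \subset \sX$ finie sur~$R$, régulière, de corps résiduel~$k(y)$), et l'inclusion directe $\partial(N_q(X/K)) \subseteq N_{q-1}(Y/k)$ est traitée exactement comme dans le texte. Mais sur le point décisif --- l'existence de la courbe~$C$ --- votre route diverge réellement. L'article n'établit cette existence (lemme~\ref{lem:geometrique}) que pour~$k$ \emph{infini}: il éclate d'abord~$y$ pour rendre $Y_\red$ régulier au point considéré, ce qui exige de trouver sur l'espace projectif exceptionnel un point rationnel sur~$k(y)$ évitant les autres composantes de la fibre spéciale --- chose impossible en général sur un corps fini ---, puis prend pour~$t$ l'équation locale de~$Y_\red$, la complète en un système régulier de paramètres et utilise la relation $t^m \in \pi\sO_{\sX,y}$ pour garantir que~$\pi$ ne s'annule pas dans le quotient. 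Le cas~$k$ fini est ensuite traité séparément, par passage à la limite le long d'une tour d'extensions non ramifiées de degrés premiers à~$n$. Vous construisez au contraire~$C$ directement, pour~$k$ quelconque, en choisissant les paramètres $f_1,\dots,f_{\dim X}$ de façon que la courbe évite la fibre spéciale: si ce choix est licite, votre preuve supprime d'un coup l'éclatement, l'hypothèse \og{}$k$ infini\fg{} et tout le second volet de la démonstration du lemme~\ref{lem:partialNq}.

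Le point à consolider --- vous le désignez vous-même comme \og{}le principal obstacle\fg{}, mais votre justification (\og{}choix suffisamment général des~$f_i$, par évitement des idéaux premiers minimaux de~$(\pi)$\fg{}) n'est pas encore une preuve --- est précisément celui où la finitude éventuelle de~$k(y)$ pourrait faire échouer un argument de position générale. Si \og{}général\fg{} s'entend au premier ordre, c'est-à-dire dans $\mathfrak{m}/\mathfrak{m}^2$, l'argument échoue: prenez $\sO_{\sX,y}$ de dimension~$2$, $k(y)=\F_2$, et $\pi=uv(u+v)$ pour un système régulier de paramètres~$(u,v)$; les trois idéaux premiers minimaux de~$(\pi)$ épuisent les trois directions rationnelles de $\mathfrak{m}/\mathfrak{m}^2$, de sorte qu'aucun paramètre régulier de partie linéaire \og{}générale\fg{} ne convient, alors que $f=u+v^2$ convient (c'est d'ailleurs le même phénomène qui bloque l'éclatement de l'article sur un corps fini). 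L'énoncé que vous utilisez est néanmoins vrai, par quotients successifs: si $B'=\sO_{\sX,y}/(f_1,\dots,f_i)$ est régulier de dimension~$\geq 2$ et si l'image de~$\pi$ y est non nulle, alors~$B'$ est factoriel, l'image de~$\pi$ n'y admet qu'un nombre fini de diviseurs premiers, tandis que les paramètres réguliers engendrent une infinité d'idéaux premiers principaux deux à deux distincts --- par exemple $(t_1+t_2^a)$, $a\geq 2$, car l'égalité $(t_1+t_2^a)=(t_1+t_2^b)$ pour $a<b$ forcerait l'idéal $(t_1,t_2^a)$, de hauteur~$2$, à être contenu dans un idéal premier de hauteur~$1$; on choisit donc $f_{i+1}$ paramètre régulier ne divisant pas l'image de~$\pi$ et on conclut par récurrence descendante sur la dimension. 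Une fois ce pas rédigé, votre construction démontre le lemme~\ref{lem:geometrique} \emph{sans} l'hypothèse que~$k$ est infini, et rend superflu l'argument de tour d'extensions de l'article: c'est une vraie simplification. Le reste de votre rédaction (finitude de~$A$ sur~$R$ par hensélianité, normalité de~$A$ entraînant $R_x=A$ et donc une extension résiduelle exactement égale à~$k(y)/k$) coïncide avec la seconde moitié de la preuve de l'article et est correct. Une broutille pour finir: la surjectivité de~$\partial$ ne découle pas de la description de son noyau, mais de la formule $\partial\{\pi,u_2,\dots,u_q\}=\{\bar{u}_2,\dots,\bar{u}_q\}$.
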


Le lemme~\ref{lem:partialNq} vaut sans hypothèse sur~$k$.
Pour le démontrer, nous aurons besoin du lemme géométrique suivant.

\begin{lem}
\label{lem:geometrique}
Soit~$\sX$ un schéma régulier, plat et de type fini au-dessus
d'un anneau de valuation discrète hensélien excellent~$R$ de corps des fractions~$K$, de corps résiduel~$k$.
Notons~$X =\sX\otimes_RK$ et $Y =\sX\otimes_Rk$.  Si~$k$ est infini,
alors pour tout point fermé $y \in Y$, il existe un point fermé $x \in X$ tel que l'extension finie~$K(x)/K$ ait pour extension
résiduelle $k(y)/k$.
\end{lem}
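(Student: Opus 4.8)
The statement is Lemme~\ref{lem:geometrique}: given a regular, flat, finite-type scheme $\sX$ over a henselian excellent DVR $R$ with infinite residue field $k$, every closed point $y$ of the special fiber $Y$ lifts to a closed point $x$ of the generic fiber $X$ inducing the same residual extension $k(y)/k$. The plan is to reduce to a purely local problem at $y$ and then produce the closed point $x$ as the generic point of a suitable regular one-dimensional subscheme of $\sX$ passing through $y$.

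Let me think about the approach. First I would localize: replace $\sX$ by the local ring $\sO_{\sX,y}$, which is a regular local ring, flat over $R$, whose residue field is the finite extension $k(y)/k$. Let $d=\dim\sO_{\sX,y}$. The key geometric idea is to cut $\sX$ down by $d-1$ general hyperplane sections (or, more precisely, by $d-1$ general elements of the maximal ideal $\mathfrak m_y$) so as to obtain a regular local ring of dimension $1$, i.e.\ a discrete valuation ring, that is still flat over $R$ and dominates $R$. Here is where the hypothesis that $k$ is infinite enters decisively: to keep both regularity of the successive sections \emph{and} flatness over $R$ (equivalently, to ensure the uniformizer of $R$ does not become a zero-divisor or a unit, so that the resulting DVR genuinely lies over $R$ with residue field $k(y)$), I need to choose the cutting elements generically, and genericity arguments over a finite residue field can fail. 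I expect this genericity/Bertini-type step to be the main obstacle.

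Concretely, I would argue by descending induction on the dimension: at each stage one has a regular local ring $A$ of dimension $\geq 2$, flat over $R$, with residue field $k(y)$, and one wants to choose $t\in\mathfrak m_A$ (lifting a sufficiently general element of $\mathfrak m_A/(\mathfrak m_A^2 + \mathfrak m_R A)$, which is a $k(y)$-vector space of dimension $\geq 1$ since $A/\mathfrak m_R A$ is regular of dimension $\geq 1$) such that $A/(t)$ is again regular, flat over $R$, with residue field $k(y)$. Regularity of $A/(t)$ holds as soon as $t$ is part of a regular system of parameters, which requires that its image in $\mathfrak m_A/\mathfrak m_A^2$ be nonzero and, to remain flat over $R$, that this image lie outside the line spanned by a uniformizer of $R$; since $k(y)$ is infinite (being a finite extension of the infinite field $k$), the $k(y)$-points avoiding a finite union of proper subspaces of $\mathfrak m_A/\mathfrak m_A^2$ are plentiful, and one can lift such a point to the desired $t$. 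Iterating until the dimension drops to $1$ yields a DVR $B$ dominating $R$, flat over $R$, with residue field $k(y)$; its fraction field $\mathrm{Frac}(B)$ is a finite extension of $K$ because $B$ is essentially of finite type over $R$ and one-dimensional, and the closed point $x$ of $X$ corresponding to $\mathrm{Spec}(\mathrm{Frac}(B))\hookrightarrow X$ has residue field $K(x)=\mathrm{Frac}(B)$ with residual extension exactly $k(y)/k$. The flatness over $R$ guarantees that $x$ indeed lies on the generic fiber $X$ and not on $Y$, which is precisely what reduces the residual extension of $K(x)/K$ to $k(y)/k$.

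The two places demanding care are therefore, first, the genericity argument preserving regularity-plus-flatness at each cut, where infiniteness of $k$ is indispensable, and second, checking that the limiting DVR $B$ really has uniformizer mapping to a uniformizer generating a finite residual extension of $k$, so that the residue field is $k(y)$ and not some proper subfield or larger field; this last point follows from tracking that each cut preserves the residue field $k(y)$ by construction, since $t$ was chosen in $\mathfrak m_A$.
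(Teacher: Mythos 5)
Your overall plan --- localize at $y$ and cut $\sO_{\sX\mkern-4mu,\mkern1.5mu{}y}$ down by successive well-chosen parameters to a one-dimensional regular local ring, flat over~$R$, with residue field $k(y)$ --- is viable, but the criterion you give at the key genericity step is wrong, and as stated the induction fails. You require only that the image of $t$ in $\mathfrak{m}_A/\mathfrak{m}_A^2$ be nonzero and avoid the line spanned by a uniformizer $\pi$ of~$R$. What flatness of $A/(t)$ actually requires is $\pi\notin(t)$, i.e.\ that $t$ not divide $\pi$; since $A$ is a regular local ring, hence a UFD, this means that $(t)$ avoids the finitely many height-one primes $(f_1),\dots,(f_r)$ of $A$ containing $\pi$ (the local branches of the special fibre), and their tangent directions have nothing to do with the direction of $\pi$ --- indeed $\pi$ often lies in $\mathfrak{m}_A^2$, in which case your condition is empty beyond $\bar{t}\neq 0$. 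Concretely, take $A$ to be the local ring of $R[u,v]/(uv-\pi)$ at $(u,v)$: it is regular of dimension~$2$, $\bar{\pi}=0$ in $\mathfrak{m}_A/\mathfrak{m}_A^2$, so $t=u$ satisfies your condition; yet $\pi\in(u)$ and $A/(u)\simeq k[v]_{(v)}$ is killed by $\pi$, so the cut lands inside the special fibre and the construction produces no point of $X$ at all. (Your parenthetical justification that $A/\mathfrak{m}_RA$ is regular is also false in general --- the special fibre can be arbitrarily singular, which is the whole difficulty of the lemma.) The repair is straightforward: the image of each $(f_i)$ in $\mathfrak{m}_A/\mathfrak{m}_A^2$ is the line $k(y)\bar{f}_i$ (or zero), and if $\bar{t}\neq 0$ then $\pi\in(t)$ forces $(t)=(f_i)$ for some $i$, hence $\bar{t}\in k(y)\bar{f}_i$; since $\dim_{k(y)}\mathfrak{m}_A/\mathfrak{m}_A^2=\dim A\geq 2$ and $k(y)$ is infinite, one can choose $\bar{t}$ outside the finitely many lines $k(y)\bar{f}_i$, and then $A/(t)$ is regular, flat over $R$, and still dominates $R$ (note that $\pi^n\in(t)$ would give $\pi\in(t)$, as $(t)$ is prime), with residue field $k(y)$.

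With this fix your route is genuinely different from the paper's. The paper first blows up $y$ and uses the infinitude of $k$ to choose, on the exceptional projective space, a point $y_1$ with the same residue field lying on no other component of the special fibre, so that $Y_\red$ becomes regular at $y_1$; it then performs a single adapted cut, taking $t$ a local equation of $Y_\red$, completing it to a regular system of parameters $(t,f_1,\dots,f_n)$ and setting $R'=\sO_{\sX\mkern-4mu,\mkern1.5mu{}y}/(f_1,\dots,f_n)$, where flatness is automatic because $t^m\in\pi\sO_{\sX\mkern-4mu,\mkern1.5mu{}y}$ for some $m\geq 1$: the blow-up disposes of all the bad tangent directions at once, which is exactly what your genericity argument must do by hand. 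Two further points you assert in passing deserve justification. First, finiteness of $\mathrm{Frac}(B)/K$ via the dimension formula uses that $R$ is universally catenary (true here, $R$ being excellent); the paper instead deduces finiteness of $R'$ over $R$ directly from henselianity (EGA~IV, 18.5.11). Second, to identify the residual extension you must know that $B$ is the valuation ring of the \emph{unique} extension of the valuation of $R$ to $K(x)$: this holds because $B$, being regular hence normal and containing $R$ with fraction field $K(x)$, contains the integral closure $R_x$ of $R$ in $K(x)$, which by excellence and henselianity of $R$ is a discrete valuation ring with the same fraction field, whence $B=R_x$ and the residue field of $B$ is indeed the residual extension of $K(x)/K$.
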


\begin{proof}
Soit $y \in Y$ un point fermé.  Notons $f:\sX_1 \to \sX$ le schéma obtenu en faisant éclater le point~$y$.
Comme~$\sX$ est régulier, la fibre de~$f$ en~$y$ est un espace projectif.  Comme de plus~$k$ est infini, elle contient un point~$y_1$ ayant même corps résiduel
que~$y$ et n'appartenant
à aucune autre composante irréductible de~$\sX_1 \otimes_R k$.  Quitte à remplacer~$\sX$ par~$\sX_1$ et~$y$ par~$y_1$, on peut donc supposer le schéma~$Y_\red$ régulier en~$y$.

\newcommand{\OXy}{\sO_{\sX\mkern-4mu,\mkern1.5mu{}y}}
Soit alors $t \in \OXy$ tel que le diviseur $Y_\red \subset \sX$ ait pour équation~\mbox{$t=0$} au voisinage de~$y$.
Comme~$t$ n'est pas un diviseur de zéro dans~$\OXy$ et comme
$\OXy/(t)=\sO_{Y_\red,\mkern1.5mu{}y}$ est un anneau régulier,
l'élément~$t$ fait partie d'un système régulier de paramètres
$(t,f_1,\dots,f_n)$
de~$\OXy$ (cf.~\cite[Corollaire~17.1.8 et Proposition~17.1.7]{ega41}).
L'anneau $R'=\OXy/(f_1,\dots,f_n)$ est local, noethérien, régulier, de dimension~$1$: c'est un anneau de valuation discrète.
Notons~$\pi$ une uniformisante de~$R$. Comme il existe $m \geq 1$ tel que $t^m \in \pi \OXy$,
l'image de~$\pi$ dans~$R'$ est non nulle, de sorte que~$R'$ est plat sur~$R$.
D'autre part, comme~$R$ est hensélien,
comme~$R'$ est une $R$\nobreakdash-algèbre locale essentiellement de type fini et comme $R' \otimes_R k$ est de dimension finie sur~$k$,
la $R$\nobreakdash-algèbre~$R'$ est finie
(cf.~\cite[Théorème~18.5.11 $c'$]{ega44}).
En particulier, le morphisme canonique $\Spec(R')\to \sX$ est une immersion fermée
et envoie le point générique de~$\Spec(R')$ sur un point
fermé $x \in X$.  Le corps résiduel de~$R'$ étant~$k(y)$,
le lemme est prouvé.
\end{proof}

Les arguments du second paragraphe de la preuve du lemme~\ref{lem:geometrique} sont ceux de
\cite[\textsection9.1, Corollary~9]{BLR}.

\begin{proof}[Démonstration du lemme~\ref{lem:partialNq}]
L'inclusion $\partial(N_q(X/K)) \subset N_{q-1}(Y/k)$ résulte tout de suite de la compatibilité de la norme avec le résidu (cf.~\cite[Proposition~7.3.9]{gilleszamuely}),
compte tenu de ce que si $x \in X$ est un point fermé et si~$y$ désigne l'unique point fermé de~$Y$ appartenant à l'adhérence de~$x$ dans~$\sX$, l'extension $k(y)/k$ se plonge dans l'extension résiduelle de $K(x)/K$.

Si~$k$ est infini, le lemme~\ref{lem:geometrique}, la compatibilité de la norme avec le résidu et la surjectivité de l'application résidu entraînent ensemble l'inclusion réciproque.

Il reste à établir l'inclusion $N_{q-1}(Y/k) \subset \partial(N_q(X/K))$
lorsque~$k$ est fini.  Soit $\alpha \in N_{q-1}(Y/k)$.
Le groupe $K_{q-1}(k)/\partial(N_q(X/K))$ étant d'exposant fini
(cf.~\eqref{se:decoupe}), il existe $n \geq 1$ tel que $n\alpha \in \partial(N_q(X/K))$.
Soit $k=k_0 \subset k_1 \subset \cdots$ une suite d'extensions finies de~$k$ de degrés premiers à~$n$,
telle que le corps $k'=\bigcup_{i \geq 0}k_i$ soit infini.
Relevons-la en une suite $R=R_0 \subset R_1 \subset \cdots$ de $R$\nobreakdash-algèbres finies étales,
posons $R'=\bigcup_{i \geq 0}R_i$
et notons~$K_i$ le corps des fractions de~$R_i$ et~$K'$ celui de~$R'$.  Le corps~$k'$ étant infini, on a $N_{q-1}(Y \otimes_k k'/k') \subset \partial(N_q(X\otimes_KK'/K'))$.
Il existe donc $i \geq 0$ tel que l'image~$\alpha_i$ de~$\alpha$ dans $N_{q-1}(Y \otimes_k k_i/k_i)$ appartienne à $\partial(N_q(X\otimes_K K_i/K_i))$.
Comme $N_{k_i/k}(\partial(N_q(X \otimes_K K_i/K_i))) \subset \partial(N_q(X/K))$
et comme $N_{k_i/k}(\alpha_i)=[k_i:k]\mkern1mu\alpha$,
il s'ensuit que $[k_i:k]\mkern1mu\alpha \in \partial(N_q(X/K))$. D'où finalement $\alpha \in \partial(N_q(X/K))$ puisque~$[k_i:k]$ et~$n$ sont premiers entre eux.
\end{proof}

Nous sommes en position de conclure la preuve du théorème~\ref{th:transition}.
Notons~$m$ la multiplicité de~$Y$ et~$D$ le diviseur effectif
sur$~\sX$ tel que $Y=mD$.
Notons encore~$D$ le sous-schéma fermé de~$\sX$ défini par le faisceau d'idéaux $\sO_{\sX}(-D)$.
Comme~$D$ est un $k$\nobreakdash-schéma propre et comme le corps~$k$ vérifie la propriété~$C_1^{q-1}$ forte
en~$\ell$, le sous-groupe de torsion $\ell$\nobreakdash-primaire
de $K_{q-1}(k)/N_{q-1}(D/k)$ est annulé par $\chi(D,\sO_D)$.
Or $K_{q-1}(k)/N_{q-1}(D/k)=K_{q-1}(k)/N_{q-1}(Y/k)$
puisque $D_\red=Y_\red$.
Il s'ensuit, grâce au lemme~\ref{lem:partialNq},
que~$\chi(D,\sO_D)$ annule
le sous-groupe de torsion $\ell$\nobreakdash-primaire
du dernier terme de~\eqref{se:decoupe}.
D'autre part, d'après le lemme~\ref{lem:multipliciteU}, l'entier~$m$ annule le sous-groupe de torsion $\ell$\nobreakdash-primaire
du premier terme de~\eqref{se:decoupe}.  Par conséquent~$n_X$ divise le produit $m\chi(D,\sO_D)$.
Enfin, il est établi dans \cite[Proposition~2.4]{elw} que $\chi(X,\sO_X)=m\chi(D,\sO_D)$ (conséquence du théorème
de Snapper--Kleiman).  Ainsi~$n_X$ divise-t-il $\chi(X,\sO_X)$, comme il fallait démontrer.
\end{proof}

\begin{cor}
\label{cor:itere}
Soient~$n$ un entier naturel et~$p$ un nombre premier.
\begin{enumerate}
\item Si $n \geq 1$, le corps
$\C((x_1))\cdots((x_n))$ vérifie la propriété~$C_1^{n-1}$ forte.
\item Le corps $\F_p((x_1))\cdots((x_n))$ vérifie la propriété~$C_1^n$ forte
hors de~$p$.
\item Le corps $\Q_p((x_1))\cdots((x_n))$ vérifie la propriété~$C_1^{n+1}$ forte
hors de~$p$.
\end{enumerate}
\end{cor}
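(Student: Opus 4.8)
Le plan est de déduire les trois assertions du théorème~\ref{th:transition} par récurrence sur~$n$, en l'appliquant à chaque étage des tours d'anneaux de valuation discrète correspondantes. Observons d'abord que l'on peut raisonner premier par premier. En effet, le quotient $K_q(k')/N_q(X/k')$ est annulé par le degré de tout point fermé de~$X$ (cf.~\cite[Remark~7.3.1]{gilleszamuely}), donc d'exposant fini lorsque~$X$ est non vide --- et la condition de la définition~\ref{def:c1forte} est triviale lorsque~$X$ est vide, puisqu'alors $\chi(X,E)=0$; ce groupe est par suite somme directe de ses composantes $\ell$\nobreakdash-primaires. Un corps vérifie donc la propriété~$C_1^q$ forte si et seulement s'il la vérifie en~$\ell$ pour tout premier~$\ell$, et la propriété~$C_1^q$ forte hors de~$p$ si et seulement s'il la vérifie en~$\ell$ pour tout $\ell\neq p$. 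C'est précisément sous cette forme \og{}en~$\ell$\fg{} que le théorème~\ref{th:transition} délivre ses conclusions.

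Je commencerais par réunir les ingrédients. Les cas de base sont déjà acquis: le corps~$\C((x_1))$ vérifie la propriété~$C_1^0$ forte (exemple~\ref{exemple:devissage} appliqué à $\C[[x_1]]$, qui redonne \cite[Theorem~3.1]{elw}) et tout corps fini vérifie la propriété~$C_1^0$ forte (corollaire~\ref{cor:kfini}). Pour les hypothèses~$C_0$, je partirais de ce que~$\C$ vérifie la propriété~$C_0^0$ en tout~$\ell$, étant algébriquement clos, et que~$\F_p$ vérifie la propriété~$C_0^1$ en tout $\ell\neq p$, puisque $\cd_\ell(\F_p)=1$ (remarque~\ref{rem:transition}(i)); la remarque~\ref{rem:transition}(ii) propage alors ces propriétés le long de chaque tour. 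Ainsi $\C((x_1))\cdots((x_m))$ vérifie la propriété~$C_0^m$ en tout~$\ell$, tandis que $\F_p((x_1))\cdots((x_m))$ vérifie~$C_0^{m+1}$ en tout $\ell\neq p$; et en appliquant une fois de plus la remarque~\ref{rem:transition}(ii) à l'anneau~$\Z_p$, de corps résiduel~$\F_p$, le corps~$\Q_p$ vérifie~$C_0^2$ en tout $\ell\neq p$, d'où~$\Q_p((x_1))\cdots((x_m))$ vérifie~$C_0^{m+2}$ en tout $\ell\neq p$.

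Il resterait à dérouler les trois récurrences, chacune reposant sur une application du théorème~\ref{th:transition} à un anneau de valuation discrète complet de la forme $k'[[x_n]]$, nécessairement hensélien et excellent, les hypothèses~$C_0$ requises à chaque étage étant celles obtenues au paragraphe précédent. Pour~(1), la récurrence sur $n\geq 1$ part du cas $n=1$ ci-dessus; au rang $n\geq 2$, le corps résiduel $k'=\C((x_1))\cdots((x_{n-1}))$ est de caractéristique~$0$, donc tout premier~$\ell$ y est inversible, et l'hypothèse de récurrence (propriété~$C_1^{n-2}$ forte en~$\ell$) jointe à la propriété~$C_0^{n-1}$ en~$\ell$ donne, par le théorème~\ref{th:transition} avec $q=n-1$, la propriété~$C_1^{n-1}$ forte en tout~$\ell$, d'où la propriété pleine d'après la réduction du premier paragraphe. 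Pour~(2), la récurrence sur $n\geq 0$ part de~$\F_p$: le corps résiduel étant de caractéristique~$p$, seuls les $\ell\neq p$ y sont inversibles, et le théorème~\ref{th:transition} avec $q=n$ livre la propriété~$C_1^n$ forte hors de~$p$. Pour~(3), le cas de base~$\Q_p$, qui vérifie la propriété~$C_1^1$ forte hors de~$p$, s'obtient en appliquant le théorème~\ref{th:transition} à~$\Z_p$ à partir de~$\F_p$; la récurrence sur~$n$, avec $q=n+1$, procède alors via $k'=\Q_p((x_1))\cdots((x_{n-1}))$.

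Le théorème~\ref{th:transition} portant tout le poids de l'argument, cette déduction ne recèle pas de difficulté sérieuse: le seul point requérant de l'attention est la transmission correcte des hypothèses, c'est-à-dire disposer à chaque étage de la propriété~$C_0^q$ voulue --- ce que garantit la remarque~\ref{rem:transition}(ii) à partir des seuls corps~$\C$ et~$\F_p$ --- et amorcer chaque récurrence avec le bon résultat de base. Je relèverais pour finir ce qui impose la restriction \og{}hors de~$p$\fg{} dans~(2) et~(3): dans la tour de la partie~(3), les étages supérieurs $\Q_p((x_1))\cdots((x_{n-1}))[[x_n]]$ sont de caractéristique résiduelle nulle, de sorte que \emph{tout} premier~$\ell$ y est inversible; ce n'est donc pas un défaut d'inversibilité qui exclut $\ell=p$, mais le fait que le cas de base~$\Q_p$, obtenu par transition à travers~$\Z_p$ de caractéristique résiduelle~$p$, n'est lui-même connu qu'en les $\ell\neq p$, restriction que la récurrence ne fait que propager.
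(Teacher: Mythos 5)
Votre démonstration est correcte et suit essentiellement la même voie que celle de l'article: application répétée du théorème~\ref{th:transition}, transmission des hypothèses~$C_0^q$ d'étage en étage par la remarque~\ref{rem:transition}~(ii), cas de base fournis par le corollaire~\ref{cor:kfini} et par \cite[Theorem~3.1]{elw}, et réduction préalable, premier par premier, de la propriété forte à ses versions \og{}en~$\ell$\fg{}. La seule divergence, sans incidence, est que vous tirez la propriété~$C_0^1$ de~$\F_p$ hors de~$p$ de la remarque~\ref{rem:transition}~(i) (donc de Bloch--Kato) et celle de la tour complexe de la propriété~$C_0^0$ de~$\C$, là où l'article invoque directement le résultat élémentaire \cite[Chapitre~X, \textsection7, Proposition~11]{serrecorpslocaux} donnant~$C_0^1$ pour les corps finis et pour~$\C((t))$.
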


\begin{proof}
Les corps finis et le corps $\C((t))$ vérifient la propriété~$C_0^1$ d'après
\cite[Chapitre~X, \textsection7, Proposition~11]{serrecorpslocaux}.
D'autre part, ils vérifient la propriété~$C_1^0$ forte
d'après le corollaire~\ref{cor:kfini} et d'après \cite[Theorem~3.1]{elw}.
Le corollaire~\ref{cor:itere} en résulte
par une application répétée du théorème~\ref{th:transition},
compte tenu de la remarque~\ref{rem:transition}~(ii).
\end{proof}

\begin{exemple}
D'après le corollaire~\ref{cor:itere}, le corps $k=\C((x))((y))$, qui est de dimension cohomologique~$2$, vérifie la propriété~$C_1^1$ de Kato et Kuzumaki, confirmant
ainsi pour ce corps la conjecture de \cite[\textsection1]{katokuzumaki}.
Auparavant, seul le cas des hypersurfaces de degré premier était connu (\emph{loc.\ cit.}, \textsection3).
Remarquons que le
corollaire~\ref{cor:itere}
prédit l'égalité $k^*=N_1(X/k)$ pour les hypersurfaces $X \subset \P^n_k$ de degré~$d$ sous une hypothèse strictement plus faible que $d\leq n$;
il en va par exemple ainsi des surfaces sextiques dans~$\P^3_k$ (cf.~\cite[Lemma~4.3, Example~4.6]{elw}).
\end{exemple}

\begin{exemple}
D'après le corollaire~\ref{cor:itere}, le corps~$\Qp$ vérifie la propriété~$C_1^1$ forte hors de~$p$.  Il vérifie donc la propriété~$C_1^1$ de Kato et Kuzumaki
\og{}hors de~$p$\fg{}; cela n'était auparavant connu que dans le cas des hypersurfaces
de degré premier
(cf.~\cite[\textsection3]{katokuzumaki}).  Nous montrerons au~\textsection\ref{sec:corpspadiquesC11} que ce corps vérifie aussi la propriété~$C_1^1$ en~$p$.
Quant à la propriété~$C_1^1$ \emph{forte} hors de~$p$ pour~$\Qp$,
elle nous servira au~\textsection\ref{sec:cdn}.
\end{exemple}

\begin{remarques}
\label{rq:surlapreuve}
(i)
L'hypothèse que~$\ell$ est inversible dans~$R$ a été utilisée à deux reprises dans la preuve du théorème~\ref{th:transition}:
une première fois pour appliquer le théorème de Gabber--de~Jong, une autre fois dans la preuve du lemme~\ref{lem:multipliciteU}.
Néanmoins, si~$k$ est parfait de caractéristique $p>0$, le lemme~\ref{lem:multipliciteU} reste valable lorsque $\ell=p$.
En effet, si $K'/K$ est une extension finie,
si~$e$ désigne l'indice de ramification de~$K'/K$,
si~$K_0/K$ est la plus grande sous-extension
non ramifiée de~$K'/K$
et si~$k$ est parfait, alors $e=[K':K_0]$.
Il~s'ensuit que le groupe $U_q(K_0)/N_{K'/K_0}(U_q(K'))$ est annulé par~$e$.
D'autre part,
comme l'extension~$K_0/K$ est non ramifiée,
on a $U^1_q(K) \subset N_{K_0/K}(U_q(K_0))$ (d'après~\cite[Chapitre~IV, \textsection1, Proposition~3]{serrecorpslocaux}
et la formule de projection);
comme de plus~$k$ vérifie la propriété~$C_0^q$ en~$\ell$,
la suite exacte~\eqref{se:u1uk} associée à l'extension~$K_0/K$ montre
que la $\ell$\nobreakdash-torsion du groupe $U_q(K)/N_{K_0/K}(U_q(K_0))$ est nulle.
Ces remarques impliquent que~$e$ annule le sous-groupe de torsion $\ell$\nobreakdash-primaire de $U_q(K)/N_{K'/K}(U_q(K'))$. La suite de la démonstration du lemme~\ref{lem:multipliciteU}
reste inchangée.

(ii)
Si l'on convient que $K_{-1}(k)=0$, on peut autoriser $q=0$ dans l'énoncé du théorème~\ref{th:transition}.  La preuve se simplifie;
on retrouve ainsi \cite[Theorem~3.1]{elw}.

(iii)
Compte tenu de l'existence de résolutions des singularités pour les schémas excellents de dimension~$\leq2$ (cf.~\cite{lipman})
et des remarques~\ref{rq:devissage}~(ii) et~\ref{rq:surlapreuve}~(i),
le corps~$\Qp$ vérifie aussi la propriété~$C_1^1$ forte en~$p$ restreinte aux schémas de dimension~$\leq 1$.
Ainsi par exemple, si~$X$ est une courbe propre de genre arithmétique~$2$ sur~$\Qp$, le groupe $N_1(X/\Qp)$ est égal à~$\Qp^\star$ tout entier.
\end{remarques}

\section{Les corps \texorpdfstring{$p$}{p}-adiques sont \texorpdfstring{$C_1^1$}{C\_1\textasciicircum1}}
\label{sec:corpspadiquesC11}

Bien que,
comme nous l'avons vu au \textsection\ref{sec:transition},
le corps~$\Qp$ vérifie la propriété~$C_1^1$ forte hors de~$p$,
nous ne savons pas s'il vérifie en toute généralité la propriété~$C_1^1$ forte, faute de disposer de la résolution des singularités pour les schémas de type fini sur~$\Z_p$.
Le~but de ce paragraphe est de démontrer que~$\Qp$ satisfait néanmoins la propriété~$C_1^1$, confirmant ainsi une conjecture de Kato
et Kuzumaki.

\begin{definition}
Soit~$K$ le corps des fractions d'un anneau de valuation discrète hensélien excellent.
Si~$X$ est un schéma de type fini sur~$K$,
l'\emph{indice résiduel de~$X$ sur~$K$} est le pgcd des degrés résiduels des extensions finies $K(x)/K$
lorsque~$x$ parcourt l'ensemble des points fermés de~$X$.
\end{definition}

L'indice résiduel de~$X$ sur~$K$ divise l'indice de~$X$ sur~$K$.
Avec les notations du~\textsection\ref{sec:transition}, on voit aisément que si~$X$ est non vide, l'indice résiduel de~$X$ sur~$K$ est l'ordre
du groupe cyclique $K_0(k)/\partial(N_1(X/K))$.

\begin{thm}
\label{th:indresiduel}
Soit~$K$ un corps $p$\nobreakdash-adique, c'est-à-dire une extension finie de~$\Qp$.
Pour tout $K$\nobreakdash-schéma propre~$X$
et tout faisceau cohérent~$E$ sur~$X$,
l'indice résiduel de~$X$ sur~$K$ divise $\chi(X,E)$.
\end{thm}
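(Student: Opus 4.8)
The plan is first to reduce, by the generation of $G_0(X)$ by the classes $[\sO_Z]$ of its integral closed subschemes and by the divisibility of residual indices under closed immersion (as in condition~(1) of Proposition~\ref{prop:devissage}), to the case where~$X$ is integral and $E=\sO_X$. It then suffices to show that the residual index of an integral~$X$ divides $\chi(X,\sO_X)$, and I treat the prime-to-$p$ part and the $p$-primary part of this index separately.

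The prime-to-$p$ part is immediate from~\textsection\ref{sec:transition}. Being $p$-adic, $K$ is the fraction field of a complete discrete valuation ring with finite residue field~$k$, and therefore satisfies the strong $C_1^1$ property away from~$p$; hence $\chi(X,\sO_X)$ annihilates the prime-to-$p$ torsion of $K_1(K)/N_1(X/K)$. Since the residue map carries this group onto $K_0(k)/\partial(N_1(X/K))$, cyclic of order the residual index, the prime-to-$p$ part of the residual index divides $\chi(X,\sO_X)$.

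For the $p$-primary part the idea is to pass to a finite extension of~$K$ over which the base change of~$X$ admits a model with reduced special fibre. Suppose $K'/K$ is finite of residual degree~$f'$ and that $X_{K'}$ admits a flat proper model~$\mathcal{X}'$ over the valuation ring~$R'$ of~$K'$ with reduced special fibre~$Y'$. As~$k'$ is perfect, $Y'$ is generically smooth, so~$\mathcal{X}'$ is smooth over~$R'$ along a dense open of~$Y'$; since~$R'$ is henselian, Hensel's lemma lifts the smooth closed points of~$Y'$, whence the residual index of~$X_{K'}$ over~$K'$ equals the index of~$Y'$ over~$k'$ (the reverse divisibility by specialisation). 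By Corollary~\ref{cor:kfini} this common value divides $\chi(Y',\sO_{Y'})$, and $\chi(Y',\sO_{Y'})=\chi(X_{K'},\sO_{X_{K'}})=\chi(X,\sO_X)$ by flatness and by invariance of the Euler characteristic under the base change $K\to K'$. On the other hand a closed point of~$X_{K'}$ of residual degree~$d'$ has residue field of degree dividing~$f'd'$ over~$k$, so the residual index of~$X$ over~$K$ divides~$f'$ times that of~$X_{K'}$ over~$K'$; when~$f'$ is prime to~$p$, this yields the required divisibility of the $p$-part by $\chi(X,\sO_X)$.

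It remains to exhibit such an extension with~$f'$ prime to~$p$, and here lies the main obstacle. In the \emph{tame} case, when~$X$ has a model whose special-fibre multiplicities are all prime to~$p$, it is enough to take~$K'/K$ totally ramified of degree their least common multiple—so that $f'=1$—and to normalise the base-changed model, which renders the special fibre reduced. The real difficulty is the \emph{wild} case, in which every model has a component of multiplicity divisible by~$p$: totally ramified base change can no longer reduce the multiplicities, and one must appeal to semistable reduction. Lemma~\ref{lem:redsemistable} supplies a model with reduced special fibre after a suitable extension, but this extension, together with the alteration that may accompany it in higher dimension, is ramified of degree divisible by~$p$; the delicate heart of the argument, on which the whole treatment at~$p$ rests, is to confine this $p$-divisible ramification to the totally ramified part, so that the residual degree~$f'$—the sole invariant of the extension that bears on the residual index—remains prime to~$p$.
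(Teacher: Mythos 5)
Your reduction to integral $X$ with $E=\sO_X$ and your treatment of the prime-to-$p$ part are both correct: part~(3) of Corollary~\ref{cor:itere} with $n=0$ gives the strong $C_1^1$ property away from~$p$ for every $p$\nobreakdash-adic field, and the exact sequence~\eqref{se:decoupe} transfers the annihilation to the cyclic quotient of order the residual index; your bookkeeping ``residual index of~$X$ over~$K$ divides $f'$ times the residual index of~$X_{K'}$ over~$K'$'' is also fine. But the entire $p$\nobreakdash-primary half hangs on an input you never supply: a finite extension $K'/K$ of residue degree prime to~$p$ over which $X_{K'}$ admits a proper flat model with \emph{reduced} special fibre. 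You flag this yourself as the ``delicate heart'', but no argument is given, and none is currently available: semistable reduction in mixed characteristic is open in dimension~$>1$, resolution of singularities over~$\Z_p$ is unknown (the paper says so at the opening of \textsection\ref{sec:corpspadiquesC11}), and Lemma~\ref{lem:redsemistable}, which you invoke, cannot be cited here --- it is proved only for $\bigcup_{n\geq 1}k((t^{1/n}))$ with~$k$ of characteristic~$0$, via KKMS, Temkin and Wang, all of which require residue characteristic~$0$. De~Jong's theorem does produce semistable models after a finite extension, but only after an \emph{alteration} of~$X$ whose degree cannot be controlled modulo~$p$, so the comparison of residual indices between~$X$ and the altered variety breaks down exactly at~$p$. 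Even your tame case implicitly needs a model with strict normal crossings special fibre (so that the normalization of the tamely ramified base change has reduced special fibre), hence resolution, once $\dim X\geq 2$.

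This obstruction is exactly what the paper's proof is engineered to avoid, and it does so by never extending~$K$ at all. The paper takes for~$\PP$ the existence of a quotient model $\sX=\sX'/G$ with~$\sX'$ regular, projective and flat over~$R$ and~$G$ finite: the equivariant version of de~Jong's theorem \cite[Theorem~5.9]{dejonggrenoble} turns the alteration into a \emph{modification} of~$X$ after passing to the quotient, so condition~(3) of Proposition~\ref{prop:devissage} holds with $\chi(Y_\eta,\sO_{Y_\eta})=1$. The substitute for your reduced-special-fibre step is Lemma~\ref{lem:quotient}: for such a quotient model, the residual index of~$X$ over~$K$ equals the index of the possibly nonreduced, singular special fibre~$Y$ over~$k$. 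Its proof blows up above a closed point~$y$, uses the transitivity of the $G$\nobreakdash-action on the geometric fibres of $\sX'\to\sX$ to show that the image of the exceptional divisor is geometrically irreducible over~$k(y)$, finds by Lang--Weil a closed point~$y^0$ of degree prime to the residual index inside the regular locus of the blown-up model, and lifts it by the local argument of Lemma~\ref{lem:geometrique}. Combined with Corollary~\ref{cor:kfini} and the equality $\chi(X,\sO_X)=\chi(Y,\sO_Y)$ given by flatness, this yields condition~(2), and the dévissage of Proposition~\ref{prop:devissage} then handles all coherent sheaves at once, with no splitting into $p$\nobreakdash-primary and prime-to-$p$ parts. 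If you want to salvage your strategy, this quotient-model lemma is the missing idea: it plays the role your reduced model was meant to play, but without any ramified extension of~$K$.
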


Le point clef de la démonstration
du théorème~\ref{th:indresiduel} consiste à
vérifier que le lemme~\ref{lem:partialNq} (resp.~le lemme~\ref{lem:geometrique}) reste valable en présence de singularités
quotient, du moins lorsque~$k$ est un corps fini
(resp.~une extension algébrique infinie d'un corps fini, ou plus généralement un corps pseudo-algébriquement clos).
C'est le contenu du lemme~\ref{lem:quotient} ci-dessous.

\begin{proof}[Démonstration du théorème~\ref{th:indresiduel}]
Notons~$R$ l'anneau des entiers de~$K$ et~$k$ le corps résiduel de~$R$.
Convenons qu'un $K$\nobreakdash-schéma propre~$X$
possède la propriété~$\PP$
s'il existe un groupe fini~$G$ et un $R$\nobreakdash-schéma~$\sX'$ irréductible, régulier, projectif et plat, muni d'une action
de~$G$ (étant entendu que le
morphisme $\sX'\to\Spec(R)$ est équivariant pour l'action triviale de~$G$ sur~$R$),
tels que~$X$ soit isomorphe à la fibre générique de $\sX=\sX'/G$ au-dessus de~$R$.
Cette définition est motivée par le lemme suivant.

\begin{lem}
\label{lem:quotient}
Soit~$G$ un groupe fini.  Soit~$\sX'$ un $R$\nobreakdash-schéma régulier, projectif et plat, muni d'une action de~$G$.
Notons $\sX=\sX'/G$ le quotient et posons $X=\sX\otimes_RK$ et $Y=\sX\otimes_Rk$.
Alors l'indice résiduel de~$X$ sur~$K$ est égal à l'indice de~$Y$ sur~$k$.
\end{lem}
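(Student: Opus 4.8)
Le plan est d'établir l'égalité annoncée par double divisibilité; via les descriptions rappelées avant l'énoncé, elle équivaut à l'identité $\partial(N_1(X/K))=N_0(Y/k)$ de sous-groupes de $K_0(k)=\Z$, et il s'agit d'adapter aux singularités quotient les arguments des lemmes~\ref{lem:partialNq} et~\ref{lem:geometrique}.

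Une inclusion est facile et ne requiert ni hypothèse sur~$k$ ni la régularité de~$\sX'$: l'indice de~$Y$ sur~$k$ divise l'indice résiduel de~$X$ sur~$K$. Pour l'obtenir, je partirais d'un point fermé $x\in X$ et considérerais l'adhérence $Z=\overline{\{x\}}$ dans~$\sX$, $R$\nobreakdash-schéma intègre et propre de point générique~$x$; comme~$R$ est hensélien et excellent, la normalisée de~$Z$ est un anneau de valuation discrète hensélien, fini sur~$R$, de corps des fractions~$K(x)$ et de corps résiduel le corps résiduel~$\kappa$ de~$K(x)$. Son point fermé s'envoie sur un point fermé $y\in Y$ avec $k(y)\hookrightarrow\kappa$, de sorte que $[k(y):k]$ divise le degré résiduel de~$K(x)/K$; l'indice de~$Y$, divisant $[k(y):k]$, divise donc ce degré résiduel pour tout~$x$, puis l'indice résiduel de~$X$. (De manière équivalente, cette inclusion $\partial(N_1(X/K))\subseteq N_0(Y/k)$ provient de la compatibilité de la norme au résidu, comme la première inclusion du lemme~\ref{lem:partialNq}.)

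Le cœur de l'énoncé est l'inclusion inverse, soit: l'indice résiduel de~$X$ divise l'indice de~$Y$. Il suffit de produire, pour tout point fermé $y\in Y$, un point fermé $x\in X$ dont le degré résiduel divise $[k(y):k]$, l'indice résiduel de~$X$ divisant alors $[k(y):k]$ pour tout~$y$. Je traiterais d'abord le cas où~$k$ est infini (extension algébrique infinie d'un corps fini, ou corps pseudo-algébriquement clos), seul cas où l'on dispose de la souplesse permettant de transposer la construction du lemme~\ref{lem:geometrique}. Fixant un point fermé $y'\in Y'=\sX'\otimes_R k$ au-dessus de~$y$, j'introduirais son groupe de décomposition $D\subseteq G$ (le stabilisateur de~$y'$) et son groupe d'inertie $I\subseteq D$ (les éléments de~$D$ agissant trivialement sur~$k(y')$), de sorte que $k(y')/k(y)$ est normale de groupe $D/I$ (à l'inséparabilité près, sans incidence puisque $\ell\neq p$). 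L'idée est alors de mener la construction du lemme~\ref{lem:geometrique} sur le schéma \emph{régulier}~$\sX'$ de façon \emph{$D$\nobreakdash-équivariante}: après éclatement de l'orbite~$Gy'$, équivariant par construction, et choix d'une tranche transverse à~$Y'_\red$ stable sous~$D$, on obtiendrait un anneau de valuation discrète $R'\subseteq\sO_{\sX',\,y'}$ fini sur~$R$, \emph{$D$\nobreakdash-stable}, de corps résiduel~$k(y')$, dont le point générique est un point fermé $x'\in X'$. Le stabilisateur d'un point étant contenu dans celui de sa spécialisation fermée, la $D$\nobreakdash-stabilité de~$R'$ force $D_{x'}=D$ pour le stabilisateur~$D_{x'}$ de~$x'$, d'où $I_{x'}=I$ et donc l'égalité des degrés résiduels des revêtements $K(x')/K(x)$ et $k(y')/k(y)$. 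En descendant par $\pi\colon\sX'\to\sX$, le point $x=\pi(x')\in X$ a pour corps résiduel $\kappa(x)=k(y')^{D/I}=k(y)$, de degré résiduel $[k(y):k]$, comme souhaité.

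Pour descendre au cas où~$k$ est fini, je reprendrais l'argument du cas fini du lemme~\ref{lem:partialNq}: le quotient $K_0(k)/\partial(N_1(X/K))$ étant fini, je choisirais une tour d'extensions finies $k=k_0\subset k_1\subset\cdots$ de degrés premiers à son ordre et de réunion~$k'$ infinie, relevée en une tour de $R$\nobreakdash-algèbres finies étales; le paragraphe précédent, appliqué au corps infini~$k'$, fournirait les points cherchés après extension des scalaires, que l'on ferait redescendre par la norme $N_{k_i/k}$, laquelle agit par multiplication par $[k_i:k]$ sur $K_0(k)=\Z$, en utilisant que $[k_i:k]$ est premier à l'ordre du quotient. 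Le point délicat de toute la preuve est la construction \emph{$D$\nobreakdash-équivariante} du paragraphe précédent, c'est-à-dire une version équivariante du lemme~\ref{lem:geometrique}: c'est là qu'interviennent de façon essentielle la régularité de~$\sX'$, l'analyse décomposition–inertie du quotient, et l'infinitude du corps résiduel, indispensable pour placer la tranche transverse invariante en position générale (le quotient résiduel~$D/I$ étant cyclique sur les corps considérés, ce qui facilite l'équivariance).
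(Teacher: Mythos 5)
La direction facile et votre schéma de descente du cas $k$ fini au cas $k$ infini (calqué sur la fin de la preuve du lemme~\ref{lem:partialNq}) sont corrects, mais le cœur de l'argument --- la construction d'une tranche $D$\nobreakdash-équivariante sur~$\sX'$ --- comporte une lacune réelle. Une telle tranche, c'est-à-dire un anneau de valuation discrète $R'=\sO_{\sX',\,y'}/(f_1,\dots,f_n)$ d'idéal $D$\nobreakdash-stable, exige l'existence d'un hyperplan $D$\nobreakdash-stable de $\mathfrak{m}_{y'}/\mathfrak{m}_{y'}^2$ supplémentaire de la droite engendrée par l'équation locale de~$Y'_\red$; de même, le point $D$\nobreakdash-fixe de corps résiduel~$k(y')$ que vous cherchez sur la fibre exceptionnelle correspond à une droite $D$\nobreakdash-stable de l'espace tangent. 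Or le groupe d'inertie~$I$ peut agir sur cet espace par une représentation irréductible de dimension~$>1$: aucune droite ni aucun hyperplan stables n'existent alors, et l'infinitude du corps résiduel n'y change rien --- l'obstruction est de nature représentation-théorique, pas une question de position générale. En outre, en ramification sauvage $p$ divise~$|I|$ et le lemme de Maschke fait défaut, de sorte qu'on ne peut même pas moyenner pour produire un supplémentaire stable. Votre aparté \og{}$\ell\neq p$\fg{} est symptomatique: il n'y a aucun~$\ell$ dans cet énoncé, qui doit être \emph{entier} puisqu'il alimente le théorème~\ref{th:indresiduel}, dont tout l'intérêt est précisément le premier~$p$; un argument ne valant qu'en situation modérée ne redonnerait que ce que le corollaire~\ref{cor:itere} fournit déjà hors de~$p$.

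La démonstration du texte évite toute équivariance. On éclate l'orbite $q^{-1}(y)_\red$ tout entière dans~$\sX'$, on passe au quotient $\sX_1=\sX'_1/G$, et on observe que l'image~$E$ du diviseur exceptionnel est \emph{géométriquement irréductible} sur~$k(y)$, parce que~$G$ agit transitivement sur les points géométriques de $q^{-1}(y)$ (Katz--Mazur). Comme~$\sX_1$ est normal, donc régulier en codimension~$1$, $E$ rencontre le lieu régulier selon un ouvert dense~$E^0$; le corps~$k(y)$ étant \emph{fini}, les bornes de Lang--Weil--Nisnevi\v{c} fournissent un point fermé $y^0\in E^0$ avec $[k(y^0):k(y)]$ premier à l'indice résiduel de~$X$, et la seconde moitié (non équivariante) de la preuve du lemme~\ref{lem:geometrique}, appliquée en~$y^0$ où~$\sX_1$ et $(Y_1)_\red$ sont réguliers, donne un point fermé $x\in X$ d'extension résiduelle $k(y^0)/k$; un argument de pgcd conclut. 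Notez que la finitude de~$k$ y est un atout et non un obstacle, et que l'on n'obtient pas --- et l'on n'a pas besoin --- d'un point de corps résiduel exactement~$k(y)$: votre objectif était à la fois plus fort que nécessaire et, en général, inaccessible. Si vous teniez à votre réduction au cas infini, il faudrait y traiter ce cas non par équivariance mais par l'argument du quotient ci-dessus, le caractère pseudo-algébriquement clos de $k'=\bigcup_i k_i$ remplaçant Lang--Weil pour trouver un point rationnel de~$E^0$.
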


\begin{proof}
Si $x \in X$ est un point fermé et si~$y$ désigne l'unique point fermé de~$Y$ appartenant à l'adhérence de~$x$ dans~$\sX$, le degré de~$y$
divise le degré résiduel de~$x$.  Par conséquent, l'indice de~$Y$ sur~$k$ divise l'indice résiduel de~$X$ sur~$K$.
Pour établir la divisibilité réciproque, fixons un point fermé $y \in Y$ et montrons que l'indice résiduel de~$X$ sur~$K$ divise le degré de~$y$.

Notons $q:\sX'\to \sX$ le morphisme canonique
et $f':\sX'_1\to\sX'$ le schéma obtenu en faisant éclater $q^{-1}(y)_\red$ dans~$\sX'$.
Le groupe~$G$ agit naturellement sur~$\sX'_1$ et le morphisme~$f'$ est $G$\nobreakdash-équivariant.
Posons $\sX_1=\sX'_1/G$ et notons $q_1:\sX'_1\to \sX_1$ le morphisme canonique et $f:\sX_1\to\sX$ le morphisme induit par~$f'$.  Notons de plus
$E'=f'^{-1}(q^{-1}(y)_\red) \subset \sX'_1$
et $E=q_1(E') \subset \sX_1$.

Comme~$\sX'$ et $q^{-1}(y)_\red$ sont réguliers, le diviseur exceptionnel~$E'$ est un espace projectif au-dessus de~$q^{-1}(y)_\red$.
En particulier, si~$L$ désigne un corps algébriquement clos contenant~$k(y)$, les composantes irréductibles de $E' \otimes_{k(y)} L$ sont en
bijection avec les $L$\nobreakdash-points de $q^{-1}(y)$.
Or~$G$ agit transitivement sur $q^{-1}(y)(L)$
(cf.~\cite[Corollary~A7.2.2]{katzmazur}). Par conséquent $(E' \otimes_{k(y)} L)/G$ est un schéma irréductible.
Comme $(E'\otimes_{k(y)}L)/G=(E'/G)\otimes_{k(y)}L$, on conclut que $E'/G$ est géométriquement irréductible sur~$k(y)$,
puis que~$E$ est géométriquement irréductible sur~$k(y)$, étant donné que~$E$ est l'image de~$E'/G$ dans~$\sX_1$.

Le fermé $E \subset \sX_1$
est de codimension~$1$
puisque~$q_1$ est fini et que~$E'$ est de codimension~$1$ dans~$\sX_1'$
(cf.~\cite[Proposition~5.4.2]{ega42}).
Par ailleurs, comme~$\sX_1'$ est normal, le schéma~$\sX_1$ est normal; en particulier,
il est régulier en dehors d'un fermé de codimension~$\geq 2$
(\emph{op.\ cit.}, Corollaire~6.12.6).
Il existe donc un ouvert dense $E^0 \subset E$ tel que~$\sX_1$ soit régulier en tout point de~$E^0$.
Quitte à rétrécir~$E^0$, on peut supposer~$E^0$ régulier et disjoint de toute composante irréductible de $Y_1=\sX_1\otimes_Rk$ autre que~$E$.

Le corps~$k(y)$ étant fini et la variété~$E^0$ géométriquement irréductible sur~$k(y)$,
il existe, d'après les bornes de Lang--Weil--Nisnevi\v{c} (cf.~\cite{langweil}, \cite{nisnevic}),
un point fermé $y^0 \in E^0$ tel que $[k(y^0):k(y)]$ soit premier à l'indice résiduel de~$X$ sur~$K$.
Par construction de~$E^0$, les schémas~$\sX_1$ et $(Y_1)_\red$ sont réguliers en~$y^0$.
La seconde partie de la démonstration du lemme~\ref{lem:geometrique} s'applique donc mot à mot et fournit un point fermé $x \in X = \sX_1 \otimes_RK$
tel que l'extension finie $K(x)/K$ ait pour extension résiduelle $k(y^0)/k$.
Il s'ensuit que l'indice résiduel de~$X$ sur~$K$ divise $[k(y):k]$, comme il fallait démontrer.
\end{proof}

Pour conclure la démonstration du théorème~\ref{th:indresiduel}, vérifions les propriétés~(1) à~(3) de la proposition~\ref{prop:devissage},
en notant~$n_X$ l'indice résiduel de~$X$ sur~$K$
pour tout $K$\nobreakdash-schéma propre~$X$.
La propriété~(1) est évidente.
La propriété~(2) résulte de la combinaison du lemme~\ref{lem:quotient} et du corollaire~\ref{cor:kfini}, en remarquant que dans la situation
du lemme~\ref{lem:quotient}, on a $\chi(X,\sO_X)=\chi(Y,\sO_Y)$
puisque~$\sX$ est plat sur~$R$.
Montrons la propriété~(3).
Si~$X$ est un $K$\nobreakdash-schéma propre et intègre, notons~$K_1$ la fermeture algébrique de~$K$ dans~$K(X)$ et~$X_1$ la normalisation de~$X$.
Le schéma~$X_1$ est naturellement un $K_1$\nobreakdash-schéma
propre, intègre et géométriquement irréductible.
Comme~$K$ est de caractéristique nulle,
la version équivariante du théorème
de de~Jong~\cite[Theorem~5.9]{dejonggrenoble}
appliquée à un modèle propre de~$X_1$ au-dessus de l'anneau des entiers de~$K_1$
fournit une modification de~$X$, c'est-à-dire un schéma intègre muni d'un morphisme
propre et birationnel vers~$X$, qui possède la propriété~$\PP$.
\end{proof}

\begin{cor}
\label{cor:padique}
Soit~$K$ un corps $p$\nobreakdash-adique.
Soient~$X$ un $K$\nobreakdash-schéma propre et~$E$ un faisceau cohérent sur~$X$.
Notons~$K^\nr$ l'extension non ramifiée maximale de~$K$.
Si~$X(K^\nr)\neq\emptyset$, le groupe $K^*/N_1(X/K)$ est annulé par $\chi(X,E)$.
\end{cor}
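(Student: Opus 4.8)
The plan is to study $K^*/N_1(X/K)$ through the residue exact sequence~\eqref{se:decoupe} in the case $q=1$, for which it holds for an arbitrary proper~$X$ since its only inputs are the surjectivity of $\partial$ and the equality $\ker\partial=U_q(K)$, neither of which needs a model. For $q=1$ the residue map is the valuation $\partial\colon K^*\to\Z$, its kernel is $U_1(K)=R^*$, and the sequence reads
\begin{align*}
0 \longrightarrow \frac{R^*}{R^*\cap N_1(X/K)} \longrightarrow \frac{K^*}{N_1(X/K)} \overset{\partial}{\longrightarrow} \frac{\Z}{\partial(N_1(X/K))} \longrightarrow 0.
\end{align*}
The right-hand term is cyclic of order the residual index of~$X$ over~$K$, as noted just before Theorem~\ref{th:indresiduel}. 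So it suffices to annihilate both ends by $\chi(X,E)$: the right end will be handled by Theorem~\ref{th:indresiduel}, and I will show the left end vanishes.

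For the left end I would first unwind the hypothesis. As $X$ is of finite type over~$K$ and $K^\nr=\bigcup_i K_i$ with $K_i/K$ finite and unramified, any $K^\nr$-point of~$X$ factors through some $K_i$ and therefore arises from a closed point $x_0\in X$ whose residue field embeds into $K_i$ over~$K$; in particular $K(x_0)/K$ is unramified. Now the norm group of an unramified extension of $p$-adic fields contains all the units (standard local class field theory, cf.~\cite{serrecorpslocaux}), so $R^*=\sO_K^*\subseteq N_{K(x_0)/K}\big(K(x_0)^*\big)\subseteq N_1(X/K)$. Hence $R^*\cap N_1(X/K)=R^*$ and the left-hand term of the sequence is trivial.

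It follows that $\partial$ induces an isomorphism $K^*/N_1(X/K)\cong\Z/\partial(N_1(X/K))$, a cyclic group whose order is the residual index of~$X$ over~$K$; by Theorem~\ref{th:indresiduel} this order divides $\chi(X,E)$, so multiplication by $\chi(X,E)$ kills it, which is the assertion. All the substance of the argument is thus already contained in Theorem~\ref{th:indresiduel}; the corollary only repackages it, the sole extra ingredient being the surjectivity of the norm on units in the unramified case, which lets me discard the unit part. I expect the only point requiring care to be the passage from $X(K^\nr)\neq\emptyset$ to the existence of an \emph{unramified} closed point (rather than a point over a possibly ramified finite extension), which is exactly where the unramifiedness of $K(x_0)/K$---and hence the vanishing of the left end---comes from.
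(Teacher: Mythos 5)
Your proof is correct and is essentially the paper's own argument: both split $K^*/N_1(X/K)$ via the exact sequence~\eqref{se:decoupe} with $q=1$, observe that the hypothesis $X(K^\nr)\neq\emptyset$ kills the unit term because the norm is surjective on units in finite unramified extensions of $p$\nobreakdash-adic fields, and annihilate the remaining term, of order the residual index, by Theorem~\ref{th:indresiduel}. The details you add (factoring the $K^\nr$\nobreakdash-point through a finite unramified level to get a closed point $x_0$ with $K(x_0)/K$ unramified, and the cyclicity of the right-hand term) are precisely the steps the paper leaves implicit, and they are carried out correctly.
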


\begin{proof}
La norme étant surjective sur les unités dans toute extension finie non ramifiée de corps $p$\nobreakdash-adiques,
l'hypothèse $X(K^\nr)\neq\emptyset$ assure que le terme de gauche de la suite exacte~\eqref{se:decoupe} pour $q=1$ est nul.
Le terme de droite étant par ailleurs annulé par l'indice résiduel de~$X$ sur~$K$,
le théorème~\ref{th:indresiduel} permet de conclure.
\end{proof}

\begin{cor}
\label{cor:padiquesC11}
Les corps $p$\nobreakdash-adiques sont~$C_1^1$.
\end{cor}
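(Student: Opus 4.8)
The plan is to obtain the corollary as a short consequence of Corollary~\ref{cor:padique}. Let~$K$ be a $p$\nobreakdash-adic field. Since every finite extension $k'/K$ is again a $p$\nobreakdash-adic field, establishing the property~$C_1^1$ for~$K$ amounts to proving that $K^*=N_1(X/K)$ for every $n\geq 1$ and every hypersurface $X\subset\P^n_K$ of degree $d\leq n$; I would begin by making this reduction explicit, so that it suffices to treat hypersurfaces over the base field itself.

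For such an~$X$ I would then record the two inputs needed to apply Corollary~\ref{cor:padique} with $E=\sO_X$. First, a hypersurface of degree $d\leq n$ in~$\P^n$ satisfies $\chi(X,\sO_X)=1$, exactly as was used in the introduction to see that the strong property~$C_1^q$ implies~$C_1^q$. Second, I need $X(K^\nr)\neq\emptyset$, where~$K^\nr$ denotes the maximal unramified extension of~$K$.

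The only substantive point is the existence of a $K^\nr$\nobreakdash-point, and this is where I expect the real content of the deduction to lie. The field~$K^\nr$ is the fraction field of a henselian, indeed excellent, discrete valuation ring whose residue field is the algebraic closure of the finite residue field of~$K$, hence algebraically closed; by Lang's quasi-algebraic closure theorem such a field is~$C_1$, so every hypersurface of degree $d\leq n$ in~$\P^n$ has a rational point over it. Concretely, such a point is already defined over some finite unramified extension of~$K$; if one prefers to invoke Lang only over the completion~$\widehat{K^\nr}$, one descends the resulting point to~$K^\nr$ by Greenberg's approximation theorem, the ring being excellent and henselian. Either way this yields $X(K^\nr)\neq\emptyset$.

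Finally I would combine these facts: Corollary~\ref{cor:padique} applied to $E=\sO_X$ shows that $K^*/N_1(X/K)$ is annihilated by $\chi(X,\sO_X)=1$, whence $K^*=N_1(X/K)$. As this holds for every $p$\nobreakdash-adic field and every Fano hypersurface over it, the reduction of the first step shows that every $p$\nobreakdash-adic field verifies~$C_1^1$, which is the assertion of the corollary. Since the arithmetic heavy lifting is already contained in Theorem~\ref{th:indresiduel} and Corollary~\ref{cor:padique}, the present argument is genuinely short, the sole external ingredient being the $C_1$ property of~$K^\nr$.
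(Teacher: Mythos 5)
Votre preuve est correcte et suit essentiellement la même voie que celle de l'article: on applique le corollaire~\ref{cor:padique} avec $E=\sO_X$, en utilisant $\chi(X,\sO_X)=1$ et le théorème de Lang pour garantir $X(K^\nr)\neq\emptyset$. Vos compléments (réduction explicite au corps de base, puisque toute extension finie d'un corps $p$\nobreakdash-adique est $p$\nobreakdash-adique, et la variante de descente via l'approximation de Greenberg depuis le complété) ne sont que des élaborations inoffensives de points laissés implicites dans l'article.
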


\begin{proof}
Les hypersurfaces $X\subset \P^n$ de degré $d\leq n$ vérifient $\chi(X,\sO_X)=1$ et possèdent un point
sur l'extension non ramifiée maximale de tout corps $p$\nobreakdash-adique
d'après un théorème de Lang (cf.~\cite{langqac}).  Le corollaire~\ref{cor:padique} s'applique.
\end{proof}

Kato et Kuzumaki
avaient établi le corollaire~\ref{cor:padiquesC11} pour les hypersurfaces de degré premier
(cf.~\cite[\textsection3, Corollary~1]{katokuzumaki})
et conjecturé sa validité en général
(\emph{op.\ cit.}, \textsection5, Problem~3).

\begin{cor}
\label{cor:espacehomogene}
Si~$K$ est un corps $p$\nobreakdash-adique et~$X$ un espace homogène d'un groupe algébrique linéaire connexe sur~$K$,
alors $K^*=N_1(X/K)$.
\end{cor}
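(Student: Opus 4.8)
Le plan est de se ramener au corollaire~\ref{cor:padique}, appliqué à une compactification lisse de~$X$, puis de transférer l'égalité obtenue au sous-schéma ouvert~$X$ à l'aide de la structure des points $p$\nobreakdash-adiques. Comme~$G$ est un groupe algébrique linéaire connexe et comme~$K$ est de caractéristique nulle, le groupe~$G$ est lisse, et l'espace homogène~$X$, quotient de~$G$, est lisse et géométriquement intègre. D'après Hironaka, je choisirais une $K$\nobreakdash-variété projective lisse~$\bar X$ contenant~$X$ comme ouvert dense, de fermé complémentaire $Z=\bar X\setminus X$ de dimension~$<\dim X$. Je commencerais par établir $N_1(\bar X/K)=K^*$, puis par démontrer l'égalité $N_1(\bar X/K)=N_1(X/K)$, ce qui conclurait.

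Pour la première étape, j'appliquerais le corollaire~\ref{cor:padique} à~$\bar X$ avec $E=\sO_{\bar X}$. Deux points sont à vérifier. D'une part, $\chi(\bar X,\sO_{\bar X})=1$: en effet~$X$ est unirationnelle (image du groupe connexe~$G$, lui-même unirationnel en caractéristique nulle), donc~$\bar X$ l'est aussi, de sorte que $H^0(\bar X,\Omega^i_{\bar X})=0$ pour $i\geq1$; la symétrie de Hodge entraîne $H^i(\bar X,\sO_{\bar X})=0$ pour $i\geq1$, tandis que $H^0(\bar X,\sO_{\bar X})=K$ puisque~$\bar X$ est géométriquement intègre. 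D'autre part, $\bar X(K^\nr)\neq\emptyset$: le corps~$K^\nr$ étant parfait et de dimension cohomologique~$\leq1$, tout espace homogène d'un groupe algébrique linéaire connexe sur~$K^\nr$ possède un point rationnel (conjecture~I de Serre, démontrée par Steinberg, jointe à la théorie des espaces homogènes sur les corps de dimension cohomologique~$\leq1$); en particulier $X(K^\nr)\neq\emptyset$, d'où a fortiori $\bar X(K^\nr)\neq\emptyset$. Le corollaire~\ref{cor:padique} donne alors $K^*/N_1(\bar X/K)=0$.

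Pour la seconde étape, comme l'inclusion $N_1(X/K)\subseteq N_1(\bar X/K)$ est évidente, il suffit de montrer l'inclusion inverse. Je fixerais un point fermé~$y$ de~$\bar X$, de corps résiduel $L=K(y)$. Le corps~$L$ étant $p$\nobreakdash-adique et~$\bar X$ étant lisse, l'ensemble~$\bar X(L)$ est une variété $L$\nobreakdash-analytique compacte de dimension~$\dim\bar X$, dont~$Z(L)$ est un fermé analytique de dimension strictement inférieure; ainsi $X(L)=\bar X(L)\setminus Z(L)$ est dense dans~$\bar X(L)$. Or $y\in\bar X(L)$, donc $\bar X(L)\neq\emptyset$, puis $X(L)\neq\emptyset$: il existe un point fermé $x\in X$ avec $K(x)\subseteq L$. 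Par transitivité de la norme, $N_{L/K}(L^*)\subseteq N_{K(x)/K}(K(x)^*)\subseteq N_1(X/K)$. En faisant varier~$y$, j'obtiendrais $N_1(\bar X/K)\subseteq N_1(X/K)$, d'où $N_1(X/K)=K^*$.

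Le point le plus délicat est l'existence d'un point rationnel de~$X$ sur~$K^\nr$. Pour un espace homogène principal, elle résulte directement de l'annulation de~$H^1(K^\nr,G)$ fournie par la conjecture~I; mais dans le cas général il faut recourir à la théorie des espaces homogènes de groupes connexes sur les corps de dimension cohomologique~$\leq1$, notamment au traitement du groupe des composantes du stabilisateur~--- c'est l'ingrédient extérieur auquel correspondent les indications de Philippe Gille. En comparaison, l'argument de transfert $p$\nobreakdash-adique de la troisième étape, qui repose uniquement sur la structure de variété analytique des points et sur le fait que le bord~$Z$ est de dimension plus petite, reste élémentaire.
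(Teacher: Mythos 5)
Votre démonstration est correcte et suit pour l'essentiel la même voie que celle de l'article: compactification lisse $\bar X$ de~$X$, égalité $\chi(\bar X,\sO_{\bar X})=1$ par unirationalité géométrique, existence d'un point sur~$K^\nr$ via Springer--Steinberg ($K^\nr$ étant parfait et de dimension cohomologique~$\leq 1$), application du corollaire~\ref{cor:padique}, puis transfert de~$\bar X$ à~$X$ grâce à la densité $p$\nobreakdash-adique de~$X(L)$ dans~$\bar X(L)$ (théorème des fonctions implicites dans l'article). Les seules différences sont de pure présentation: vous explicitez l'argument de Hodge donnant $\chi=1$ et formulez l'inclusion $N_1(\bar X/K)\subseteq N_1(X/K)$ via les points fermés et la transitivité de la norme, là où l'article se contente d'énoncer $N_1(X'/K)=N_1(X/K)$ par densité.
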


Dans cet énoncé, l'espace~$X$ n'est supposé ni projectif ni principal homogène.

\begin{proof}
Soit $X \subset X'$ une compactification lisse de~$X$.
Comme la variété~$X'$ est géométriquement unirationnelle (cf.~\cite{chevalley}),
elle vérifie $\chi(X',\sO_{X'})=1$.
Elle admet par ailleurs un point
dans l'extension non ramifiée maximale de~$K$
d'après un théorème de Springer et Steinberg,
cette extension étant un corps de dimension cohomologique~$1$
(cf.~\cite[Chapitre~III, \textsection2.3 et~\textsection2.4]{serrecg}).
Le corollaire~\ref{cor:padique} entraîne donc que $K^*=N_1(X'/K)$.
D'autre part, d'après le théorème des fonctions implicites,
la lissité de~$X'$ implique que~$X(L)$ est dense dans~$X'(L)$ pour toute extension finie~$L/K$,
de sorte que $N_1(X'/K)=N_1(X/K)$.
\end{proof}

\begin{remarque}
\label{rq:invbir}
De façon générale, quel que soit le corps~$k$, les groupes $N_q(X/k)$
sont des invariants birationnels des $k$\nobreakdash-schémas lisses de type fini.
En effet, si~$X$ est un ouvert dense d'un tel schéma~$X'$,
si $x \in X'$ est un point fermé, si $C \subset X' \otimes_kk(x)$ est une courbe
régulière passant par~$x$ et rencontrant~$X$ et si $Z \subset C \cap (X \otimes_kk(x))$ est le support d'un diviseur sur~$C$ de degré~$1$ sur~$k(x)$,
alors $K_q(k(x))=N_q(Z/k(x))$.
\end{remarque}

Lorsque~$k$ est un corps de nombres,
un théorème de Kato et Saito~\cite[\textsection7, Theorem~4]{katosaito}
affirme que l'application naturelle
\begin{align}
\label{eq:localglobal}
k^*/N_1(X/k) \to \smash{\prod_{v \in \Omega}}\vphantom{\prod_v} k_v^*/N_1(X\otimes_k k_v/k_v)
\end{align}
est un isomorphisme
si~$X$ est une variété
projective, lisse et géométriquement irréductible sur~$k$ et si~$\Omega$ désigne l'ensemble des places de~$k$ et~$k_v$ le complété de~$k$ en $v\in\Omega$.
D'après la remarque~\ref{rq:invbir}, l'hypothèse de projectivité est superflue.
Compte tenu de ce théorème, le corollaire~\ref{cor:espacehomogene} entraîne donc tout de suite le

\begin{cor}
\label{cor:espacehomogenecdn}
Si~$k$ est un corps de nombres, si~$X$ est un espace homogène d'un groupe algébrique linéaire connexe sur~$k$
et si $X(k_v)\neq\emptyset$ pour toute place réelle~$v$ de~$k$, alors $k^*=N_1(X/k)$.
\end{cor}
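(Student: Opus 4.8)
Le plan est de ramener l'énoncé global à une collection d'énoncés locaux grâce au théorème de Kato et Saito, puis d'invoquer le corollaire~\ref{cor:espacehomogene} aux places finies. D'abord je vérifierais que~$X$ relève bien des hypothèses de ce théorème: comme~$X$ est un espace homogène d'un groupe algébrique linéaire connexe sur un corps de caractéristique nulle, il est lisse, et la connexité du groupe agissant entraîne que~$X$ est géométriquement irréductible sur~$k$ (sur~$\bar k$, c'est une orbite du groupe connexe, donc un schéma lisse et connexe). D'après la remarque~\ref{rq:invbir}, l'hypothèse de projectivité étant superflue, l'application naturelle~\eqref{eq:localglobal} est donc un isomorphisme pour notre~$X$. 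Il suffit par conséquent de montrer que chaque facteur local est nul, autrement dit que $k_v^*=N_1(X\otimes_k k_v/k_v)$ pour toute place~$v$ de~$k$.

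Je distinguerais ensuite trois types de places. Aux places finies~$v$, le complété~$k_v$ est un corps $p$\nobreakdash-adique et $X\otimes_k k_v$ reste un espace homogène d'un groupe algébrique linéaire connexe sur~$k_v$; le corollaire~\ref{cor:espacehomogene} fournit alors directement l'égalité voulue. Aux places complexes, on a $k_v=\C$, corps algébriquement clos sur lequel~$X$ acquiert un point rationnel, c'est-à-dire un point fermé de degré~$1$; la norme depuis ce point étant l'identité, on a $N_1(X\otimes_k k_v/k_v)=k_v^*$. Aux places réelles enfin, l'hypothèse $X(k_v)\neq\emptyset$ garantit de la même manière l'existence d'un point fermé de degré~$1$, d'où à nouveau $N_1(X\otimes_k k_v/k_v)=k_v^*$. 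Tous les facteurs locaux de~\eqref{eq:localglobal} étant nuls, on conclurait que $k^*=N_1(X/k)$.

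La seule véritable subtilité réside dans le traitement des places réelles, et c'est là que l'hypothèse portant spécifiquement sur ces places prend tout son sens. Le corps~$\mathbf R$ n'étant pas un corps $p$\nobreakdash-adique, le corollaire~\ref{cor:espacehomogene} ne lui est pas applicable; de fait, l'énoncé serait faux sans cette hypothèse additionnelle, puisqu'un espace homogène dépourvu de point réel n'a que des points fermés de corps résiduel~$\C$, de sorte que son groupe de normes se réduit aux normes issues de~$\C$ et ne remplit pas~$\mathbf R^*$. L'existence d'un point rationnel, imposée aux seules places réelles, est donc exactement ce qu'il faut pour forcer l'égalité locale correspondante, les places complexes s'en dispensant automatiquement et les places finies relevant de la machinerie $p$\nobreakdash-adique déjà développée.
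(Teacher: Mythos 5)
Votre démonstration est correcte et suit exactement la voie du texte, qui déduit le corollaire \og{}tout de suite\fg{} du théorème local-global de Kato et Saito (débarrassé de l'hypothèse de projectivité grâce à la remarque~\ref{rq:invbir}) combiné au corollaire~\ref{cor:espacehomogene} aux places finies, à l'hypothèse $X(k_v)\neq\emptyset$ aux places réelles et à la trivialité automatique aux places complexes. Vos vérifications supplémentaires (lissité et irréductibilité géométrique de~$X$ pour légitimer l'isomorphisme~\eqref{eq:localglobal}, nécessité de l'hypothèse aux places réelles puisque la norme depuis~$\C$ ne remplit pas~$\mathbf{R}^*$) sont justes et ne font qu'expliciter ce que l'article laisse implicite.
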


Plusieurs cas particuliers des corollaires~\ref{cor:espacehomogene} et~\ref{cor:espacehomogenecdn} sont bien connus ou se trouvent dans la littérature.
Lorsque~$X$ est une variété de Severi--Brauer, on retrouve notamment le théorème de Hasse--Schilling--Maass (\cite{hasseschilling}, \cite{maass})
selon lequel
la norme réduite d'une algèbre centrale simple sur
un corps de nombres totalement imaginaire
est surjective
(cf.~\cite[\textsection10, Lemma~7]{katosaito}).
Lorsque~$X$ est une quadrique, on retrouve la surjectivité de la norme spinorielle pour les formes quadratiques non dégénérées de rang au moins~$3$ définies
sur un corps $p$\nobreakdash-adique ou sur un corps de nombres totalement imaginaire, grâce au principe de norme de Knebusch (cf.~\cite{knebusch}, \cite[Satz~A]{kneser}).
Enfin, le cas où~$X$ est la variété des sous-groupes de Borel d'un groupe réductif fixé est traité dans \cite[Lemma~III.2.8]{gillerequivalence}.

\section{Les corps de nombres totalement imaginaires sont~\texorpdfstring{$C_1^1$}{C\_1\textasciicircum1}}
\label{sec:cdn}

Le but de ce paragraphe est d'établir la conjecture de~\cite{katokuzumaki} sur la propriété~$C_1^1$ pour les corps de nombres totalement imaginaires.

\begin{thm}
\label{th:cdntotimaginaires}
Les corps de nombres totalement imaginaires sont~$C_1^1$.
\end{thm}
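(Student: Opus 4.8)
Le but est de ramener la propriété~$C_1^1$ pour un corps de nombres totalement imaginaire~$k$ à des énoncés déjà disponibles pour les complétés~$k_v$, via le principe local-global de Kato et Saito~\eqref{eq:localglobal}. Soit $X \subset \P^n_k$ une hypersurface de degré $d \leq n$ et soit $a \in k^*$; je veux montrer que $a \in N_1(X/k)$. La difficulté est que le théorème de Kato et Saito s'applique aux variétés projectives lisses et géométriquement irréductibles, alors que~$X$ peut être singulière et réductible. Je commencerais donc par extraire de~$X$ un sous-schéma fermé intègre~$W$ convenable, puis par désingulariser et appliquer le principe local-global à une compactification lisse de~$W$.

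D'abord, le théorème~\ref{th:axconj} (ou plutôt la proposition~\ref{prop:indiceirred}, appliquée avec $\chi(X,\sO_X)=1$) fournit un fermé irréductible $W \subset X$ tel que la fermeture algébrique~$k_W$ de~$k$ dans~$k(W)$ soit de degré~$1$ sur~$k$, c'est-à-dire tel que~$W$ soit géométriquement irréductible sur~$k$. Quitte à remplacer~$W$ par son lieu lisse puis à désingulariser, j'obtiens une variété~$X'$ projective, lisse et géométriquement irréductible sur~$k$, birationnelle à~$W$; d'après la remarque~\ref{rq:invbir}, on a $N_1(W/k)=N_1(X'/k)$, et comme $W \subset X$ entraîne $N_1(X'/k)=N_1(W/k) \subset N_1(X/k)$, il suffit de prouver que $a \in N_1(X'/k)$. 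Le théorème de Kato et Saito réduit alors ce problème à montrer que l'image~$a_v$ de~$a$ appartient à $N_1(X'\otimes_k k_v/k_v)$ pour toute place~$v$ de~$k$.

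Aux places finies~$v$, le complété~$k_v$ est un corps $p$\nobreakdash-adique et $X'\otimes_k k_v$ est une hypersurface de Fano compactifiée, donc $\chi(X'\otimes_k k_v,\sO)=1$ et, d'après le théorème de Lang, $X'(k_v^\nr)\neq\emptyset$; le corollaire~\ref{cor:padique} donne alors $k_v^*=N_1(X'\otimes_k k_v/k_v)$, de sorte que $a_v$ y appartient trivialement. Aux places archimédiennes, l'hypothèse que~$k$ est totalement imaginaire assure que toutes les places~$v$ sont complexes, donc $k_v=\C$ est algébriquement clos et $N_1(X'\otimes_k k_v/k_v)=\C^*=k_v^*$. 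Ainsi la condition locale est satisfaite en chaque place, et l'isomorphisme~\eqref{eq:localglobal} force $a \in N_1(X'/k)$, ce qui achève la preuve.

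Le point délicat sur lequel il faudra veiller est la gestion de la lissité et de l'irréductibilité géométrique dans le passage de~$X$ à~$X'$: la proposition~\ref{prop:indiceirred} ne garantit a priori que l'existence d'un fermé irréductible~$W$, et c'est l'hypothèse $\chi(X,\sO_X)=1$ jointe à l'argument d'irréductibilité géométrique (comme au théorème~\ref{th:axconj}, mais ici sans contrainte sur le groupe de Galois, en exploitant que la fermeture algébrique de~$k$ dans le corps de fonctions d'une variété géométriquement irréductible est triviale) qui permet de choisir~$W$ géométriquement irréductible; la désingularisation en caractéristique~$0$ et l'invariance birationnelle de la remarque~\ref{rq:invbir} font le reste sans obstacle sérieux.
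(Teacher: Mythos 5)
Votre stratégie bute sur deux obstacles réels, que le texte précédant justement ce théorème signale explicitement. Premièrement, la proposition~\ref{prop:indiceirred} appliquée avec $\chi(X,\sO_X)=1$ donne seulement $i_X=1$, c'est-à-dire que le \emph{pgcd} des degrés $[k_W:k]$ vaut~$1$; elle ne fournit en aucun cas un fermé intègre~$W$ avec $k_W=k$. Dans la démonstration du théorème~\ref{th:axconj}, c'est précisément l'hypothèse que le groupe de Galois absolu est un pro-$p$\nobreakdash-groupe qui permet de passer de \og{}degré premier à~$p$\fg{} à \og{}extension purement inséparable\fg{}, donc à l'irréductibilité géométrique. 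Pour un corps de nombres, aucun argument de ce type n'est disponible, et l'existence d'un fermé géométriquement irréductible dans toute hypersurface de degré $d\leq n$ sur un tel corps n'est établie nulle part dans l'article (la proposition~\ref{prop:hilbertien} n'en donne qu'une forme affaiblie, en termes de $H^1(k,\Z/p\Z)$). Votre remarque finale est circulaire: que la fermeture algébrique de~$k$ dans le corps des fonctions d'une variété géométriquement irréductible soit triviale \emph{caractérise} de telles variétés, mais n'en \emph{produit} pas.

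Deuxièmement, même en admettant un tel~$W$ et une compactification lisse~$X'$ d'une désingularisation, les conditions locales que vous invoquez tombent en défaut: $X'$ n'est pas une hypersurface de degré $\leq n$ mais une sous-variété a priori quelconque de~$X$. On n'a donc ni $\chi(X',\sO_{X'})=1$ (prenez pour~$W$ une courbe de genre $g\geq 2$ tracée sur~$X$: alors $\chi(X',\sO_{X'})=1-g$), ni $X'(k_v^\nr)\neq\emptyset$ — le théorème de Lang fournit des points de~$X$ sur~$k_v^\nr$, qui n'ont aucune raison de se trouver sur~$W$. Le corollaire~\ref{cor:padique} ne s'applique donc pas à~$X'$, et $k_v^*/N_1(X'\otimes_kk_v/k_v)$ peut être non trivial. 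C'est exactement l'obstruction décrite dans le texte: toute réduction au principe local-global de Kato--Saito fait apparaître des variétés quelconques, pour lesquelles on ne dispose sur~$k_v$ que de la propriété~$C_1^1$ (relative aux hypersurfaces) et non de la propriété~$C_1^1$ forte. La démonstration de l'article contourne ce point en dévissant non pas directement $k^*/N_1(X/k)$ mais le noyau relatif $Q_{X/k,S}$ (proposition~\ref{prop:cdnQ}): aux places de caractéristique résiduelle $p>\dim X$, l'annulation par la caractéristique d'Euler--Poincaré résulte de Hirzebruch--Riemann--Roch joint à la propriété~$C_1^1$ forte hors de~$p$ des corps $p$\nobreakdash-adiques (corollaire~\ref{cor:itere}), tandis que le corollaire~\ref{cor:padiquesC11} n'est appliqué qu'à l'hypersurface~$X$ elle-même, aux places de~$S$. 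Sans un ingrédient de ce type, votre argument ne peut pas être réparé.
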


Le théorème~\ref{th:cdntotimaginaires} était connu dans le cas des hypersurfaces \emph{lisses} de degré premier (cf.~\cite[\textsection4, Theorem~2]{katokuzumaki}).
Kato et Kuzumaki l'avaient déduit de leurs résultats sur les corps $p$\nobreakdash-adiques grâce au
théorème local-global
rappelé à la fin du~\textsection\ref{sec:corpspadiquesC11}.
Ce~théorème local-global s'applique à des variétés lisses et géométriquement irréductibles.
L'hypothèse d'irréductibilité géométrique est cruciale:
il est en effet bien connu, depuis Hasse~\cite{hassecyclic}, que
même si~$X$ est le spectre d'une extension biquadratique de~$k$,
l'application~\eqref{eq:localglobal} n'est pas injective en général.
Si l'on cherche à établir la propriété~$C_1^1$ sans hypothèse de lissité
pour les corps de nombres totalement imaginaires à l'aide de ce principe
local-global et de la propriété~$C_1^1$ des corps $p$\nobreakdash-adiques,
un dévissage du type envisagé au~\textsection\ref{sec:devissage} paraît donc nécessaire.  Un tel dévissage
fait cependant apparaître des variétés \emph{a priori} quelconques, qui n'ont pas de raison d'être des hypersurfaces;
or nous savons seulement que les complétés~$k_v$ vérifient la propriété~$C_1^1$ et non la propriété~$C_1^1$ forte.
Nous contournerons cette difficulté,
dans la démonstration du théorème~\ref{th:cdntotimaginaires},
en combinant le corollaire~\ref{cor:padiquesC11}
avec la remarque que~$k_v$ vérifie néanmoins
la propriété~$C_1^1$ forte restreinte aux schémas de dimension $<p-1$,
où~$p$ désigne la caractéristique résiduelle de~$v$ (conséquence du théorème de Hirzebruch--Riemann--Roch et du corollaire~\ref{cor:itere}; cf.~la démonstration de la proposition~\ref{prop:cdnQ} ci-dessous).

\begin{proof}[Démonstration du théorème~\ref{th:cdntotimaginaires}]
Soit~$k$ un corps de nombres.  Notons~$\Omega$ l'ensemble de ses places.
Si~$X$ est un $k$\nobreakdash-schéma propre et $S \subset \Omega$ un ensemble fini, posons
\begin{align}
Q_{X/k,S} = \Ker\Big(k^*/N_1(X/k) \to \prod_{v \in S} k_v^*/N_1(X_v/k_v)\Big)\rlap{\text{,}}
\end{align}
où $X_v=X\otimes_k k_v$ et où~$k_v$ désigne le complété de~$k$ en~$v$.

\begin{prop}
\label{prop:cdnQ}
Soit~$n$ un entier.
Supposons que~$S$ contienne les places réelles de~$k$ et les places finies de caractéristique résiduelle~$\leq n$.
Alors pour tout $k$\nobreakdash-schéma propre~$X$ tel que $\dim(X)<n$
et pour tout faisceau cohérent~$E$ sur~$X$,
le groupe $Q_{X/k,S}$ est annulé par $\chi(X,E)$.
\end{prop}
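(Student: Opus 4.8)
The plan is to apply the dévissage principle of Proposition~\ref{prop:devissage} once more, this time to the integer $n_X$ defined as the exponent of $Q_{X/k,S}$ when $X$ is nonempty (and $n_X=0$ otherwise), under the standing hypothesis that $\dim(X)<n$. Since Remark~\ref{rq:devissage}~(ii) allows us to verify conditions~(1)--(3) only for schemes of bounded dimension, this restriction is harmless. Condition~(1) is immediate from functoriality of the norm groups $N_q$. For condition~(3), given a proper integral $k$-scheme $X$, I would take $K_1$ to be the algebraic closure of $k$ in $k(X)$ and $X_1$ the normalization of $X$; then $X_1$ is geometrically irreducible over $K_1$, and I would produce a generically finite dominant $f\colon Y\to X$ with $Y$ satisfying $\PP$ by invoking a suitable alteration or resolution, choosing $\PP$ to be a property (such as being smooth and geometrically irreducible over its field of constants, so that the Kato--Saito local-global theorem applies) for which condition~(2) can be checked.

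The main work lies in verifying condition~(2): for a $k$-scheme $X$ satisfying $\PP$, one must show $\chi(X,\sO_X)$ annihilates $Q_{X/k,S}$. The idea is to use the Kato--Saito isomorphism~\eqref{eq:localglobal}, valid for smooth geometrically irreducible varieties (and, by Remark~\ref{rq:invbir}, without the projectivity hypothesis). Injectivity of~\eqref{eq:localglobal} means that an element of $k^*/N_1(X/k)$ lying in $Q_{X/k,S}$ must already be trivial at all places in $S$, so it is controlled by the completions $k_v$ for $v\notin S$. For such $v$ the residue characteristic $p$ exceeds $n>\dim(X)$, so $\dim(X_v)<p-1$. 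The crucial local input is that $k_v$ verifies the strong $C_1^1$ property restricted to schemes of dimension $<p-1$: this follows from Corollary~\ref{cor:itere} (which gives strong $C_1^1$ away from $p$ for $\Qp$) combined with Hirzebruch--Riemann--Roch, which shows that in dimension $<p-1$ the Euler characteristic $\chi(X_v,E)$ is prime to $p$, so that the $p$-primary obstruction disappears and $\chi(X_v,E)$ annihilates $k_v^*/N_1(X_v/k_v)$ outright.

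Assembling these pieces, I expect the argument for~(2) to run as follows: an element $\alpha\in Q_{X/k,S}$ maps to zero in every $k_v^*/N_1(X_v/k_v)$ for $v\in S$ by definition; for $v\notin S$, the local strong property in dimension $<p-1$ shows that $\chi(X,\sO_X)\cdot\alpha$ becomes trivial in $k_v^*/N_1(X_v/k_v)$; hence $\chi(X,\sO_X)\cdot\alpha$ lies in the kernel of~\eqref{eq:localglobal} over all of $\Omega$, which is zero by Kato--Saito. Thus $\chi(X,\sO_X)$ annihilates $Q_{X/k,S}$, establishing~(2). With all three conditions in hand, Proposition~\ref{prop:devissage} (in the bounded-dimension form of Remark~\ref{rq:devissage}~(ii)) yields that $n_X$ divides $\chi(X,E)$ for every coherent $E$, which is the desired conclusion.

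The delicate point will be ensuring that the local strong $C_1^1$ property in dimension $<p-1$ applies to the arbitrary proper varieties that the dévissage throws up, and that the geometric-irreducibility hypothesis needed for Kato--Saito is preserved by the choice of $\PP$. The subtlety is that the Kato--Saito theorem requires smoothness and geometric irreducibility, whereas dévissage naturally produces singular or reducible strata; handling this forces a careful choice of $\PP$ together with the normalization/alteration step in condition~(3), so that every scheme satisfying $\PP$ is simultaneously smooth, geometrically irreducible over its constant field, and of dimension $<n$.
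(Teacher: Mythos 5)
Your overall architecture coincides with the paper's: dévissage applied to $n_X$ defined as the exponent of $Q_{X/k,S}$, resolution of singularities (Hironaka, since $k$ has characteristic zero) for condition~(3), and for condition~(2) the Kato--Saito isomorphism~\eqref{eq:katosaito} combined with Hirzebruch--Riemann--Roch and the strong $C_1^1$ property of $k_v$ away from $p$ (corollaire~\ref{cor:itere}). But there are two genuine gaps. First, condition~(1) is \emph{not} ``immediate from functoriality of the norm groups'': functoriality only yields a map $Q_{Y/k,S}\to Q_{X/k,S}$, and to get $n_X\mid n_Y$ you need this map to be \emph{surjective}. An element of $Q_{X/k,S}$ is represented by some $a\in k^*$ lying in $N_1(X_v/k_v)$ for all $v\in S$, but nothing formal places $a$ in the smaller groups $N_1(Y_v/k_v)$. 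The paper's lemme~\ref{lem:surjqyqx} proves the surjectivity using a nontrivial weak-approximation theorem of Kato--Saito (the image of the diagonal $N_1(X/k)\to\prod_{v\in S}N_1(X_v/k_v)$ is dense) together with the openness of $N_1(Y_v/k_v)$ in $k_v^*$ (it contains $k_v^{*m}$); this is one of the two key lemmas of the proof and cannot be waved away. Second, your handling of geometric irreducibility is incomplete in exactly the way you flag at the end: building ``smooth and geometrically irreducible over its field of constants'' into $\PP$ does not by itself verify condition~(2), because that condition demands that $\chi(X,\sO_X)$ annihilate $Q_{X/k,S}$ over the \emph{base} field $k$, whereas Kato--Saito then only controls $Q_{X/k',S'}$ over the constant field $k'$. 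The paper bridges this with lemme~\ref{lem:kkprime}: the cokernel of the norm map $Q_{X/k',S'}\to Q_{X/k,S}$ is killed by $[k':k]$ --- proved by the same density-plus-openness argument --- which, combined with the multiplicativity formula~\eqref{eq:multiplicativitedim} for $\chi$, reduces to the geometrically irreducible case. You acknowledge the difficulty but supply no argument for it.

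There is also a smaller but real error in your local input: Hirzebruch--Riemann--Roch does \emph{not} show that $\chi$ is prime to $p$ in dimension $<p-1$ (a smooth curve of genus $p+1$ has $\chi(\sO)=-p$). What it shows (\cite[Proposition~1.2]{elw}, as invoked in the paper) is that the \emph{index} of $X$ divides $\chi(X,\sO_X)$ in $\Z[1/n!]$. Since the index annihilates $k_v^*/N_1(X_v/k_v)$ and $p>n$, this kills the $p$\nobreakdash-primary part of that group by $\chi(X,\sO_X)$, while the prime-to-$p$ part is killed by $\chi(X,\sO_X)$ thanks to the strong $C_1^1$ property of $k_v$ away from $p$; the two halves together give the restricted strong $C_1^1$ statement you want. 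So your conclusion at the places $v\notin S$ is correct, but the mechanism you give for it would fail as stated.
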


\begin{proof}
Pour tout $k$\nobreakdash-schéma propre~$X$, notons~$n_X$ l'exposant de $Q_{X/k,S}$ si~$X$
est non vide ou~$0$ sinon, et convenons que~$X$ possède la propriété~$\PP$
si~$X$ est irréductible et lisse.
Compte tenu de la proposition~\ref{prop:devissage} et de la remarque~\ref{rq:devissage}~(ii), il suffit, pour démontrer la proposition~\ref{prop:cdnQ},
de vérifier les propriétés~(1) à~(3) de la proposition~\ref{prop:devissage} pour les schémas de dimension~$<n$.
La propriété~(3) est satisfaite grâce à Hironaka.
La~propriété~(1) est conséquence du lemme suivant.

\begin{lem}
\label{lem:surjqyqx}
Pour tout morphisme
de $k$\nobreakdash-schémas propres non vides
$Y \to X$, l'application naturelle $Q_{Y/k,S} \to Q_{X/k,S}$
est surjective.
\end{lem}

\begin{proof}
D'après Kato et Saito~\cite[Lemma~12]{katosaito},
l'image de l'application diagonale $N_1(X/k) \to \prod_{v \in S} N_1(X_v/k_v)$ est dense.
Tout élément de $Q_{X/k,S}$ est donc représenté par un élément de~$k^*$
arbitrairement proche de~$1$ aux places de~$S$.
D'autre part,
pour toute place $v\in S$,
le sous-groupe $N_1(Y\otimes_kk_v/k_v)$ de~$k_v^*$ est ouvert
puisqu'il contient $k_v^{*m}$ pour un $m \geq 1$.
Le lemme résulte de la combinaison de ces deux affirmations.
\end{proof}

Un lemme similaire nous sera nécessaire pour établir la propriété~(2).

\begin{lem}
\label{lem:kkprime}
Soit~$k'/k$ une extension finie.  Soit~$X$ un $k'$\nobreakdash-schéma propre.
Notons~$S'$ l'ensemble des places de~$k'$ divisant une place de~$S$.
Le conoyau de l'application norme
$Q_{X/k',S'} \to Q_{X/k,S}$
est annulé par $[k':k]$.
\end{lem}

\begin{proof}
Supposons~$X$ non vide (lorsque~$X$ est vide, le lemme est trivial puisque $N_1(X/k)=1$ et $N_1(X_v/k_v)=1$ pour tout~$v$).
Comme on l'a vu dans la démonstration du lemme~\ref{lem:surjqyqx},
tout élément de $Q_{X/k,S}$ est représenté par un élément de~$k^*$ dont l'image dans $\prod_{w \in S'}k'^*_w$
est arbitrairement proche de~$1$ et en particulier appartient à $N_1(X\otimes_{k'} k'_w/k'_w)$ pour tout $w\in S'$.
Par conséquent, tout élément de $Q_{X/k,S}$ est représenté par un élément de~$k^*$ dont l'image dans $k'^*/N_1(X/k')$
appartient à~$Q_{X/k',S'}$.
Le lemme~\ref{lem:kkprime} s'ensuit en appliquant la norme de~$k'$ à~$k$.
\end{proof}

Fixons maintenant un $k$\nobreakdash-schéma~$X$ irréductible, propre, lisse,
de dimension~$<n$ et montrons que $Q_{X/k,S}$ est annulé par $\chi(X,\sO_X)$.
Compte tenu de la formule~\eqref{eq:multiplicativitedim} et de la normalité de~$X$,
le lemme~\ref{lem:kkprime}
permet de supposer~$X$ géométriquement irréductible sur~$k$,
quitte à remplacer~$k$ par sa fermeture algébrique~$k'$ dans~$k(X)$.
Le $k$\nobreakdash-schéma~$X$ étant à présent lisse et géométriquement irréductible,
le théorème local-global de Kato et Saito~\cite[\textsection7, Theorem~4]{katosaito}
fournit\footnote{Une hypothèse de projectivité apparaît dans \cite[\textsection7, Theorem~4]{katosaito}.  Ni cette hypothèse, ni celle, plus faible, de propreté,
n'interviennent cependant dans la démonstration.  Alternativement, que cette hypothèse soit superflue résulte aussi de la remarque~\ref{rq:invbir}.} un isomorphisme
\begin{align}
\label{eq:katosaito}
Q_{X/k,S} \longisoto \prod_{v \in \Omega \setminus S} k_v^*/N_1(X_v/k_v)\rlap{\text{.}}
\end{align}

Soit $v \in \Omega \setminus S$ une place finie.  Notons~$p$ sa caractéristique résiduelle.
Comme $\dim(X)<n$, le théorème de Hirzebruch--Riemann--Roch entraîne que l'indice de~$X$ sur~$k$
divise $\chi(X,\sO_X)$ dans $\Z[1/n!]$ (cf.~\cite[Proposition~1.2]{elw}).  Il s'ensuit que
le groupe $\left(k_v^*/N_1(X_v/k_v)\right) \otimes_\Z \Z[1/n!]$ est annulé par $\chi(X,\sO_X)$.
D'autre part, puisque le corps~$k_v$ vérifie la propriété~$C_1^1$ forte hors de~$p$
(cf.~corollaire~\ref{cor:itere}),
le groupe $\left(k_v^*/N_1(X_v/k_v)\right) \otimes_\Z \Z[1/p]$ est annulé par $\chi(X,\sO_X)$.
L'hypothèse faite sur~$S$ assure que $p>n$;
le groupe $k_v^*/N_1(X_v/k_v)$ est donc lui-même annulé par $\chi(X,\sO_X)$.
La~place~$v$ étant quelconque parmi les places finies hors de~$S$
et le groupe $k_v^*/N_1(X_v/k_v)$ étant nul pour~$v$ complexe,
on conclut, grâce à~\eqref{eq:katosaito}, que $Q_{X/k,S}$ est annulé par $\chi(X,\sO_X)$.
La proposition~\ref{prop:cdnQ} est ainsi prouvée.
\end{proof}

Établissons maintenant le théorème~\ref{th:cdntotimaginaires}.  Supposons~$k$ totalement imaginaire et fixons une hypersurface $X \subset \P^n_k$ de degré $d \leq n$.
Soit~$S$ l'ensemble des places finies de~$k$ de caractéristique résiduelle~$\leq n$.  Considérons la suite exacte
\begin{align}
\label{se:localglobal}
\xymatrix{
0 \ar[r] & Q_{X/k,S} \ar[r] & k^*/N_1(X/k) \ar[r] & \displaystyle\prod_{v \in S} k_v^*/N_1(X_v/k_v)\rlap{\text{.}}
}
\end{align}
Comme $\chi(X,\sO_X)=1$,
la proposition~\ref{prop:cdnQ} montre que le terme de gauche est nul.  Le terme de droite est nul d'après
le corollaire~\ref{cor:padiquesC11}.  Il s'ensuit que $k^*=N_1(X/k)$, comme il fallait démontrer.
\end{proof}

\begin{remarque}
Lorsque~$k$ est un corps de nombres formellement réel, par exemple~$\Q$,
il résulte de la démonstration du théorème~\ref{th:cdntotimaginaires}
que l'égalité $k^*=N_1(X/k)$ vaut pour toute hypersurface $X \subset \P^n_k$ de degré~$d \leq n$
vérifiant $X(k_v)\neq\emptyset$ pour toute place réelle~$v$ de~$k$.
\end{remarque}

\section{Questions, exemples et remarques}
\label{sec:qrem}

\subsection{Cohomologie galoisienne et faisceaux cohérents}
\label{sec:cohgaletfaisecauxcoh}

La définition de la propriété~$C_1^q$ forte et les résultats des~\textsection\ref{sec:ax} à~\ref{sec:cdn}
suggèrent la généralisation suivante du cas $i=1$ de \cite[\textsection5, Problem~1]{katokuzumaki},
qui concernait les hypersurfaces de degré~$d \leq n$ dans~$\P^n_k$.

\begin{question}
\label{q:noyau}
Soit~$k$ un corps non ordonnable (cf.~\cite[\textsection5.1]{jacobson}).  Soit~$p$ un nombre premier inversible dans~$k$.
Soient~$X$ un $k$\nobreakdash-schéma propre et~$E$ un faisceau cohérent sur~$X$.
Le noyau de l'application naturelle
\begin{align}
\label{eq:apprest}
H^1(k,\Z/p\Z) \to \prod H^1(k(x),\Z/p\Z) \rlap{\text{,}}
\end{align}
où~$x$ parcourt l'ensemble des points fermés de~$X$, est-il annulé par $\chi(X,E)$?
\end{question}

La question~\ref{q:noyau} admet une réponse affirmative
si~$X$ est une variété de Severi--Brauer (cf.~\cite[\textsection5, Lemma~6]{katokuzumaki}).
D'autre part, sans hypothèse sur~$X$, elle
admet une réponse affirmative
lorsque $k=\C((t))$ (resp.\ lorsque~$k$ est un corps fini),
comme il résulte de~\cite[Theorem~1]{elw} (resp.\ du corollaire~\ref{cor:kfini}),
et plus généralement
lorsque~$k$ est l'un des corps apparaissant dans le corollaire~\ref{cor:itere} et que~$p$ n'est pas la caractéristique résiduelle.
Cela découle du corollaire~\ref{cor:itere} et de la preuve
de \cite[\textsection5, Proposition~3]{katokuzumaki},
qui repose sur les théorèmes de dualité locale en cohomologie galoisienne.
À l'aide d'arguments plus élémentaires, nous
montrons, dans la proposition~\ref{prop:hilbertien} ci-dessous, que la question~\ref{q:noyau} admet également une réponse affirmative dès que~$k$ remplit l'une des conditions suivantes
(et ce, même si~$k$ est ordonnable):
\begin{enumerate}
\item[(i)] $k$ est un corps de nombres;
\item[(ii)] il existe un sous-corps $k_0 \subset k$ tel que l'extension $k/k_0$ soit de type fini mais ne soit pas algébrique (autrement dit~$k$ est un corps de fonctions);
\item[(iii)] $k$ est le corps des fractions d'un anneau factoriel non principal (par exemple $k=k_0((x_1,\dots,x_n))$ pour un entier $n \geq 2$ et un corps~$k_0$).
\end{enumerate}
Ces trois types de corps partagent en effet la propriété d'être hilbertiens (cf.~\cite[Theorem~13.4.2, Theorem~15.4.6]{friedjarden}).

\begin{prop}
\label{prop:hilbertien}
La question~\ref{q:noyau} admet une réponse affirmative lorsque~$k$ est un corps hilbertien.
\end{prop}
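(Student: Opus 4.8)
The plan is to turn the statement into a pure divisibility assertion and then reduce it to Proposition~\ref{prop:indiceirred} by means of Hilbert's irreducibility theorem. Since $p$ is invertible in~$k$, the group $H^1(k,\Z/p\Z)$ is killed by~$p$; hence $\chi(X,E)$ annihilates the kernel~$N$ of~\eqref{eq:apprest} if and only if either $N=0$ or $p \mid \chi(X,E)$. A nonzero element of~$N$ is a nontrivial homomorphism from the absolute Galois group of~$k$ to~$\Z/p\Z$, that is, a cyclic extension $L/k$ of degree~$p$; as $p$ is invertible in~$k$ this extension is separable, and the condition that the character lie in~$N$ means exactly that $L$ embeds into~$k(x)$ for every closed point~$x$ of~$X$. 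It therefore suffices to prove the following: \emph{if $L/k$ is a separable extension of degree~$p$ that embeds into~$k(x)$ for every closed point~$x$ of~$X$, then $p \mid \chi(X,E)$.}

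I would deduce this from Proposition~\ref{prop:indiceirred}, which gives $i_X \mid \chi(X,E)$, by showing that $p \mid i_X$. Recall that $i_X$ is the greatest common divisor of the integers $[k_W:k]$ as~$W$ runs over the integral closed subschemes of~$X$, where $k_W$ denotes the algebraic closure of~$k$ in~$k(W)$. Every closed point of such a~$W$ is a closed point of~$X$, so~$L$ embeds into~$k(w)$ for every closed point~$w$ of~$W$. It thus suffices to prove, for each integral closed $W \subset X$, that~$L$ embeds into~$k_W$: for then $p=[L:k]$ divides $[k_W:k]$, and hence $p \mid i_X \mid \chi(X,E)$. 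When $\dim W=0$ the unique point of~$W$ has residue field~$k_W$ and there is nothing to prove, so the whole difficulty lies in the case $\dim W \geq 1$.

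The heart of the argument, and the step I expect to be the main obstacle, is the following consequence of the hilbertianity of~$k$. Suppose $\dim W \geq 1$ and $L \not\hookrightarrow k_W$; equivalently $L \not\hookrightarrow k(W)$, so that, $L/k$ being Galois of prime degree, the algebra $L \otimes_k k(W)$ is a field and $W_L = W \times_k \Spec(L)$ is an integral scheme, finite étale of degree~$p$ over~$W$. I claim that Hilbert's irreducibility theorem then produces a closed point $w \in W$ whose fibre in~$W_L$ stays connected, i.e.\ with $L \otimes_k k(w)$ a field, contradicting the standing hypothesis that $L \hookrightarrow k(w)$. Concretely, after shrinking~$W$ to a dense affine open and choosing a transcendence basis $t_1,\dots,t_r$ of $k(W)/k$ ($r=\dim W \geq 1$) realising $k(W)$ as a finite separable extension of $k(t_1,\dots,t_r)$, the field $L \otimes_k k(W)$ becomes finite separable over $k(t_1,\dots,t_r)$ and is generated by a polynomial $h(t_1,\dots,t_r,Y)$ irreducible over $k(t_1,\dots,t_r)$; the hilbertianity of~$k$ (cf.~\cite[Chapter~13]{friedjarden}) yields a point $a \in k^r$ at which $h(a,Y)$ remains irreducible of the same degree over~$k$, and unwinding the tower $\Spec(L \otimes_k k(W)) \to W \to \mathbf{A}^r_k$, where the two steps have generic degrees~$p$ and $[k(W):k(t_1,\dots,t_r)]$, forces the corresponding closed point~$w$ of~$W$ to be inert in~$W_L$, as desired. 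The one point requiring care is the existence of a separating transcendence basis in positive characteristic; this should be harmless here, since the cover $W_L \to W$ is étale and one may replace~$W$ by a dense open over which the situation is separable. Granting this, $L \hookrightarrow k_W$ for every~$W$, whence $p \mid i_X$ and the proposition follows; note that orderability of~$k$ plays no role in the argument.
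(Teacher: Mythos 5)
Your proof is correct, and it takes a genuinely different route from the paper's. The paper proves a stronger auxiliary statement (Lemme~\ref{lem:restinj}): for \emph{normal} $X$ of finite type over a Hilbertian field, the whole map $H^1(X,\Z/p\Z)\to\prod H^1(k(x),\Z/p\Z)$ is injective — a statement about \emph{all} degree-$p$ étale covers of $X$, not just constant ones — and then runs the dévissage of the proposition~\ref{prop:devissage} afresh, taking for~$n_X$ the exponent of the kernel of~\eqref{eq:apprest} and for~$\PP$ the property of being irreducible and normal. You instead translate the statement into the divisibility $p\mid\chi(X,E)$ when the kernel is nonzero, and reuse the proposition~\ref{prop:indiceirred} as a black box by proving $p\mid i_X$, i.e.\ $L\subset k_W$ for every integral closed $W\subset X$. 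The two proofs share the initial translation (a nonzero kernel element is a cyclic degree-$p$ extension $L/k$ embedding into every residue field $k(x)$) and the same fundamental input (Hilbert irreducibility producing an inert closed point), but your decomposition buys something real: the dévissage is run only once (inside the proof of the proposition~\ref{prop:indiceirred}), and the Hilbertian input is needed only for the \emph{constant} covers $W\times_k\Spec(L)\to W$, for which the normality of~$W$ is irrelevant — whereas the paper needs normality precisely because Lemme~\ref{lem:restinj} fails for non-normal schemes (the nodal-curve example at the end of the~\textsection\ref{sec:cohgaletfaisecauxcoh}). In exchange, the paper's route yields the lemma~\ref{lem:restinj} itself, which is of independent interest.

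One caveat, which you flagged yourself but whose proposed fix is not right as stated: replacing~$W$ by a dense open does not change $k(W)$, so it cannot repair the possible inseparability of $k(W)/k(t_1,\dots,t_r)$ — over an imperfect Hilbertian field (e.g.\ $\F_q(s)$ with $q\neq p$), the extension $k(W)/k$ need not be separably generated, and $L\otimes_kk(W)$ need not admit a primitive element over $k(t_1,\dots,t_r)$. The correct repair is exactly the statement the paper invokes without comment in Lemme~\ref{lem:restinj}: over a Hilbertian field, a quasi-finite dominant morphism from an integral variety to $\mathbf{A}^r_k$ ($r\geq1$) admits a rational point of~$\mathbf{A}^r_k$ with irreducible fibre, inseparable covers included (a separably Hilbertian field is Hilbertian, by a theorem of Uchida; cf.~\cite{friedjarden}). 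It even suffices to apply this to $W_L$ alone: if the fibre of $W_L$ over~$a$ is a single point, its image~$w$ in~$W$ satisfies that $\Spec(L\otimes_kk(w))$ is a closed subscheme of that one-point fibre, hence a local Artinian étale $k(w)$-algebra, hence a field, which contradicts $L\hookrightarrow k(w)$. With this substitution your argument is complete; in characteristic zero there is no issue at all, and, as you note and as the paper remarks, orderability of~$k$ plays no role.
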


\begin{proof}
Nous aurons besoin du lemme suivant.

\begin{lem}
\label{lem:restinj}
Soit~$X$ un schéma normal, de type fini sur un corps hilbertien~$k$.
Soit~$p$ un nombre premier.
L'application naturelle
\begin{align}
\label{eq:appnat}
H^1(X,\Z/p\Z) \to \prod H^1(k(x),\Z/p\Z)\rlap{\text{,}}
\end{align}
où~$x$ parcourt l'ensemble des points fermés de~$X$, est injective.
\end{lem}

\begin{proof}
On peut supposer~$X$ connexe et non vide.
Sous cette hypothèse, tout élément non nul~$\alpha$ de $H^1(X,\Z/p\Z)$ est représenté par un revêtement étale connexe $\pi:X' \to X$ de degré~$p$.
Les schémas~$X$ et~$X'$ étant normaux, connexes et non vides, ils sont irréductibles.
Soit~$U \subset X$ un ouvert dense
muni d'un morphisme quasi-fini et dominant
$f:U \to \mathbf{A}^n_k$.
Comme~$k$ est hilbertien, il existe $z \in \mathbf{A}^n_k(k)$ tel que
les schémas $f^{-1}(z)$ et $\pi^{-1}(f^{-1}(z))$ soient irréductibles.
Posant $x=f^{-1}(z)$, l'image de~$\alpha$ dans $H^1(k(x),\Z/p\Z)$ est alors non nulle.
\end{proof}

Pour tout $k$\nobreakdash-schéma propre~$X$,
notons~$n_X$ l'exposant du noyau de~\eqref{eq:apprest}
et appelons~$\PP$ la propriété, pour~$X$, d'être irréductible et normal.
Dans ce contexte, les propriétés~(1) et~(3) de la proposition~\ref{prop:devissage} sont évidentes.
La propriété~(2) découle du lemme~\ref{lem:restinj} et de la remarque que si~$X$ est un $k$\nobreakdash-schéma propre, irréductible et normal
et si~$k'$ désigne la fermeture algébrique de~$k$
dans~$k(X)$,
le noyau de l'application naturelle $H^1(k,\Z/p\Z)\to H^1(X,\Z/p\Z)$ est annulé par $[k':k]$,
donc par~$\chi(X,\sO_X)$
au vu de la formule~\eqref{eq:multiplicativitedim}.
D'où le résultat, grâce à la proposition~\ref{prop:devissage}.
\end{proof}

En particulier, le problème d'origine \cite[\textsection5, Problem~1]{katokuzumaki} admet une
solution positive, pour $i=1$, pour tous les corps mentionnés
au~\textsection\ref{sec:cohgaletfaisecauxcoh}.
L'exemple de la courbe (irréductible, avec un point double ordinaire) obtenue
à partir de~$\P^1_k$ en identifiant les points~$0$ et~$\infty$
montre que l'énoncé du lemme~\ref{lem:restinj} tombe en défaut si~$X$ n'est plus supposé normal
(dans cet exemple, quels que soient~$p$ et~$k$, l'application~\eqref{eq:appnat}
se factorise par l'application
 image
réciproque $H^1(X,\Z/p\Z)\to H^1(\P^1_k,\Z/p\Z)$
et celle-ci
 a pour noyau~$\Z/p\Z$).
Le passage par la question~\ref{q:noyau} et par
le principe de dévissage du~\textsection\ref{sec:devissage} semble donc
inévitable pour répondre à \cite[\textsection5, Problem~1]{katokuzumaki}
pour $i=1$ et~$k$ hilbertien en toute généralité.

%% Ainsi, pour~$k$ hilbertien, le passage par la question~\ref{q:noyau} et par
%% le principe de dévissage du~\textsection\ref{sec:devissage} semble-t-il
%% inévitable pour traiter les hypersurfaces de degré $d \leq n$ dans~$\P^n_k$
%% en toute généralité.

\subsection{Un exemple de corps \texorpdfstring{$C_1^0$}{C\_1\textasciicircum{}0} qui n'est pas \texorpdfstring{$C_1$}{C\_1}.}
\label{sec:exemple}

Tout corps~$C_1$ vérifie de façon évidente la propriété~$C_1^0$ de Kato et Kuzumaki.
En nous appuyant sur le principe de dévissage du~\textsection\ref{sec:devissage},
nous donnons dans ce paragraphe un contre-exemple à l'implication réciproque.
Plus précisément,
Ax~\cite{ax} a construit
un corps de dimension cohomologique~$1$ et de caractéristique~$0$ qui n'est pas~$C_1$.
Nous prouvons ci-dessous que ce corps vérifie la propriété~$C_1^0$ forte, \emph{a fortiori}
la propriété~$C_1^0$.

Commençons par une proposition générale concernant la propriété~$C_1^0$ forte pour les corps
de séries de Puiseux.

\begin{prop}
\label{prop:C10kK}
Si~$k$ est un corps de caractéristique~$0$ vérifiant la propriété~$C_1^0$ forte,
le corps $K=\bigcup_{n \geq 1} k((t^{1/n}))$ vérifie aussi la propriété~$C_1^0$ forte.
\end{prop}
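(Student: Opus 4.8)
Le plan est d'appliquer la proposition~\ref{prop:devissage} au corps~$K$, dans l'esprit de l'exemple~\ref{exemple:devissage}, mais en remplaçant le corps résiduel algébriquement clos par un corps~$k$ quelconque vérifiant la propriété~$C_1^0$ forte. Comme $K_0(K')/N_0(X/K')=\Z/\mathrm{ind}(X/K')$ pour tout $K'$\nobreakdash-schéma propre non vide~$X$, la propriété~$C_1^0$ forte de~$K$ équivaut à l'assertion que l'indice de~$X$ sur~$K'$ divise $\chi(X,E)$ pour toute extension finie~$K'/K$. Or toute extension finie de~$K$ est de nouveau un corps de Puiseux $\bigcup_{n\geq 1}k'((t^{1/n}))$ sur une extension finie~$k'/k$: en effet~$K$ est hensélien, de corps résiduel~$k$ et de groupe de valuation~$\Q$, lequel étant divisible force $e=1$, tandis que la caractéristique nulle fournit un corps de représentants; et~$k'$ vérifie encore la propriété~$C_1^0$ forte. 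Il suffit donc de démontrer, pour~$K$ lui-même, que l'indice de tout $K$\nobreakdash-schéma propre~$X$ divise ses caractéristiques d'Euler--Poincaré.

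Posant $R_n=k[[t^{1/n}]]$, je prendrais pour~$n_X$ l'indice de~$X$ sur~$K$ (ou~$0$ si~$X$ est vide) et conviendrais que~$X$ possède la propriété~$\PP$ s'il existe un entier $n\geq 1$ et un $R_n$\nobreakdash-schéma~$\sX$ irréductible, régulier, propre et plat tel que~$X$ soit isomorphe à $\sX\otimes_{R_n}K$. La propriété~(1) de la proposition~\ref{prop:devissage} est immédiate. Pour la propriété~(3), je descendrais un $K$\nobreakdash-schéma propre et intègre~$X$ en un $k((t^{1/n}))$\nobreakdash-schéma propre et intègre~$X_0$ tel que $X\cong X_0\otimes_{k((t^{1/n}))}K$, choisirais un $R_n$\nobreakdash-modèle propre et plat de~$X_0$, puis appliquerais la résolution des singularités en caractéristique nulle (licite car~$R_n$ est excellent et de caractéristique nulle) pour le rendre régulier; après extension des scalaires à~$K$, on obtient un morphisme birationnel $f\colon Y\to X$ avec~$Y$ vérifiant~$\PP$, de sorte que $\chi(Y_\eta,\sO_{Y_\eta})=\deg f=1$ est premier à~$n_X$. (Alternativement, en localisant en chaque nombre premier~$\ell$, inversible dans~$R_n$, on pourrait utiliser l'altération de degré premier à~$\ell$ de Gabber et de~Jong au lieu de la résolution, comme dans l'exemple~\ref{exemple:devissage}.)

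Le cœur de la preuve sera la vérification de la propriété~(2). Soit~$\sX$ irréductible, régulier, propre et plat sur~$R_n$, et posons $X=\sX\otimes_{R_n}K$ et $Y_0=\sX\otimes_{R_n}k$. La platitude de~$\sX$ sur~$R_n$ et l'invariance de la caractéristique d'Euler--Poincaré par extension plate du corps de base donnent $\chi(X,\sO_X)=\chi(Y_0,\sO_{Y_0})$. Comme~$k$ est de caractéristique nulle, il est infini, et le lemme~\ref{lem:geometrique} appliqué à~$\sX$ sur~$R_n$ associe à tout point fermé $y\in Y_0$ un point fermé $x_0$ de $\sX\otimes_{R_n}k((t^{1/n}))$ dont le degré résiduel sur~$k((t^{1/n}))$ vaut $[k(y):k]$. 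Le groupe de valuation~$\Q$ de~$K$ étant divisible, l'extension ramifiée $k((t^{1/n}))(x_0)$ se plonge, après extension à~$K$, dans le corps de Puiseux sur~$k(y)$, lequel est de degré $[k(y):k]$ sur~$K$; il en résulte un point fermé $x\in X$ avec $[K(x):K]=[k(y):k]$. Ainsi l'indice de~$X$ sur~$K$ divise-t-il $[k(y):k]$ pour tout~$y$, donc divise l'indice de~$Y_0$ sur~$k$. La propriété~$C_1^0$ forte de~$k$, appliquée au $k$\nobreakdash-schéma propre~$Y_0$, montre que cet indice divise $\chi(Y_0,\sO_{Y_0})$; on conclut que $n_X=\mathrm{ind}(X/K)$ divise $\chi(Y_0,\sO_{Y_0})=\chi(X,\sO_X)$, ce qui établit~(2). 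La proposition~\ref{prop:devissage} donnera alors la conclusion voulue.

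La principale difficulté résidera dans la comparaison, au sein de la propriété~(2), de l'indice de~$X$ sur le corps de Puiseux~$K$ avec celui de la fibre spéciale~$Y_0$ sur~$k$. C'est là qu'interviennent de manière essentielle le lemme de relèvement~\ref{lem:geometrique} (d'où l'usage de ce que~$k$ est infini) et la divisibilité du groupe de valuation~$\Q$, qui permet de convertir le degré résiduel d'un point en son degré sur~$K$; c'est l'ingrédient nouveau par rapport au cas d'un corps résiduel algébriquement clos traité dans~\cite{elw}, où l'indice de la fibre spéciale était contrôlé par la seule géométrie des multiplicités.
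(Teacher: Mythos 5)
Your scaffolding is sound and, in part, parallel to the paper's: dévissage via the proposition~\ref{prop:devissage}, reduction to~$K$ itself by noting that every finite extension of~$K$ is again a Puiseux field over a finite extension of~$k$ (the value group~$\Q$ being divisible forces $e=1$), and properties~(1) et~(3) via descente à un niveau fini plus résolution des singularités en caractéristique nulle. The genuine gap is in your verification of property~(2), and it sits exactly where the paper has to work hardest. From a regular model~$\sX/R_n$, the lemme~\ref{lem:geometrique} does give, for each closed point $y \in Y_0$, a closed point~$x_0$ of $\sX \otimes_{R_n} k((t^{1/n}))$ whose \emph{residual} extension is $k(y)/k$, but the extension $k((t^{1/n}))(x_0)/k((t^{1/n}))$ may be ramified, and your claim that the divisibility of the value group of~$K$ converts the residual degree into the degree over~$K$ is false: a totally (tamely) ramified extension of $k((t^{1/n}))$ is generated by $(u\,t^{1/n})^{1/e}$ for some \emph{unit}~$u$, and after base change to~$K$ one is left with an $e$\nobreakdash-th root of the leading constant $u_0 \in k(y)^*$, which need not lie in the Puiseux field over~$k(y)$. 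Concretely, take $k=\C((u))$ (which does satisfy the propriété~$C_1^0$ forte, by \cite[Theorem~3.1]{elw}) and $X=\Spec\, k((t))(\sqrt{ut\,})$: the model $\sX=\Spec\, k[[t]][x]/(x^2-ut) \cong \Spec\, k[[x]]$ is irreducible, regular, finite flat over $R_1=k[[t]]$, its special fiber $\Spec\, k[x]/(x^2)$ has index~$1$ over~$k$, yet $X\otimes_{k((t))}K=\Spec\, K(\sqrt{u})$ has index~$2$ over~$K$ since $\sqrt{u}\notin k$. So your intermediate assertion \og{}l'indice de~$X$ sur~$K$ divise l'indice de~$Y_0$ sur~$k$\fg{} fails for general regular models (in this example the conclusion of~(2) happens to hold, since $\chi=2$, but your argument does not establish it; as stated, property~(2) for your choice of~$\PP$ is essentially as hard as the proposition itself).

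This is precisely why the paper first reduces, via Chow and Hironaka, to~$X$ smooth and projective, and then proves the lemme~\ref{lem:redsemistable} (réduction semi-stable: after a \emph{further ramified base change} $R_n \to R_m$ — this is where the divisibility of the value group of~$K$ genuinely enters — one may choose the model so that $Y$ is \emph{réduite}, à composantes irréductibles lisses; via~\cite{kkms} over un corps algébriquement clos, and in general via Temkin--de~Jong, la version équivariante de Wang et une descente galoisienne). Reducedness forces $\sX \to \Spec(R_m)$ to be smooth along the smooth locus~$Y^0$ of~$Y$, so Hensel lifting produces \emph{unramified} points, whose degree over~$K$ equals the residue degree on the nose; the smoothness of the components then gives l'égalité des indices de~$Y^0$ et de~$Y$ (invariance birationnelle de l'indice, cf.\ remarque~\ref{rq:invbir}), after which la propriété~$C_1^0$ forte de~$k$ et la platitude conclude as in your last step. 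To repair your proof you would need this control of the multiplicities/ramification of the special fiber, i.e., some form of the lemme~\ref{lem:redsemistable}.
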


\begin{proof}
Si~$X$ est un $K$\nobreakdash-schéma propre, notons~$n_X$ l'indice de~$X$ sur~$K$.
D'après la proposition~\ref{prop:devissage},
compte tenu du lemme de Chow (pour la projectivité)
et du théorème de Hironaka (pour la lissité),
il suffit de montrer que pour tout $K$\nobreakdash-schéma~$X$ irréductible, projectif
et lisse, l'indice de~$X$ sur~$K$ divise $\chi(X,\sO_X)$.
Fixons un tel~$X$ et posons $R_n=k[[t^{1/n}]]$ pour $n\geq 1$.

\begin{lem}
\label{lem:redsemistable}
Il existe un entier $n \geq 1$ et un $R_n$\nobreakdash-schéma~$\sX$ régulier, projectif et plat tel que $\sX \otimes_{R_n}K=X$ et tel que
le schéma $Y=\sX \otimes_{R_n}k$ soit réduit, soit à composantes irréductibles lisses
et soit un diviseur à croisements normaux sur~$\sX$.
\end{lem}

\begin{proof}[Esquisse de démonstration]
Lorsque~$k$ est algébriquement clos, c'est le théorème de réduction semi-stable~\cite{kkms}.
Pour le cas général, choisissons un entier~$n$ et un modèle projectif régulier~$\sX$ de~$X$ sur~$R_n$
tel que~$Y_\red$ soit un diviseur à croisements normaux et à composantes irréductibles lisses
(cf.~\cite[Theorem~1.1]{temkin}, \cite[\textsection7.2]{dejong}).
Fixons une extension finie galoisienne $\ell/k$ telle que les composantes irréductibles
de~$Y \otimes_k \ell$ soient géométriquement irréductibles sur~$\ell$
et posons
$G=\mathrm{Gal}(\ell/k)$.
La preuve de~\cite[Theorem~4.8]{wang} fournit un multiple~$m$ de~$n$ et un
modèle $G$\nobreakdash-équivariant,
régulier et projectif~$\sX'$
 de $X \otimes_k \ell$ sur $R_m \otimes_k \ell$, dont la fibre spéciale est un diviseur réduit à croisements normaux
$G$\nobreakdash-strict au sens de \cite{wang}.  Par descente galoisienne, le schéma~$\sX'$
détermine alors un modèle de~$X$ sur~$R_m$ remplissant les conditions voulues (cf.~\cite[\textsection6.2]{BLR}).
\end{proof}

Notons~$Y^0$ l'ouvert de lissité de~$Y$ sur~$k$.  D'après le lemme de Hensel, l'indice de~$X$ sur~$K$ divise l'indice de~$Y^0$ sur~$k$.
D'autre part, comme les composantes irréductibles de~$Y$ sont lisses, l'indice de~$Y^0$ sur~$k$ est égal à l'indice de~$Y$ sur~$k$.
L'indice de~$Y$ sur~$k$ divise $\chi(Y,\sO_Y)$ puisque~$k$ vérifie la propriété~$C_1^0$ forte.
Enfin, la platitude de~$\sX$ sur~$R_n$ entraîne que $\chi(Y,\sO_Y)=\chi(X,\sO_X)$.
La proposition~\ref{prop:C10kK} est donc démontrée.
\end{proof}

Soit~$k$ une extension algébrique de $\C((x))$ de groupe de Galois absolu $\Z_2 \times \Z_3$.
Posons $K=\bigcup_{n \geq 1} k((t^{1/n}))$ et notons $k' \subset K$ la réunion des $k((t^{1/n}))$ lorsque~$n$ parcourt l'ensemble des entiers premiers à~$5$.
D'après Ax~\cite{ax}, le corps~$k'$ est de dimension cohomologique~$1$ mais n'est pas~$C_1$.
Montrons que~$k'$ vérifie néanmoins la propriété~$C_1^0$ forte.
Comme le degré de toute extension finie de~$k$ divise une puissance de~$6$,
le corps~$k$ vérifie la propriété~$C_0^0$ hors de~$6$
(cf.~remarque~\ref{rem:transition}~(i)).
Le~théorème~\ref{th:transition} et la remarque~\ref{rq:surlapreuve}~(ii)
permettent d'en déduire que~$k((t))$, et donc aussi~$k'$, vérifie la propriété~$C_1^0$ forte hors de~$6$.
D'autre part, d'après \cite[Theorem~3.1]{elw}, le corps~$k$ vérifie la propriété~$C_1^0$ forte.
Grâce à la proposition~\ref{prop:C10kK}, il s'ensuit que~$K$ vérifie la propriété~$C_1^0$ forte.
Comme~$K$ est réunion d'extensions finies de~$k'$ de degré une puissance de~$5$, on conclut que~$k'$ vérifie la propriété~$C_1^0$ forte hors de~$5$.
Ainsi, le corps~$k'$ vérifie à la fois la propriété~$C_1^0$ forte hors de~$6$ et la propriété~$C_1^0$ forte hors de~$5$:
il vérifie donc la propriété~$C_1^0$ forte.

\subsection{Corps de dimension cohomologique \texorpdfstring{$2$}{2}}

Les méthodes du présent article ne permettent pas de répondre aux quatre questions suivantes, toutes dues à Kato et Kuzumaki~\cite{katokuzumaki}.

\begin{enumerate}
\item Le corps $\Q_p$ est-il~$C_2^0$?
\item Les corps de nombres totalement imaginaires sont-ils $C_2^0$?
\item Le corps $\C((x,y))$ est-il $C_1^1$?
\item Le corps $\C(x,y)$ est-il $C_1^1$?
\end{enumerate}

La première question admet une réponse affirmative pour les hypersurfaces de degré premier (cf.~\cite{katokuzumaki}).
Les autres questions sont ouvertes même dans le cas de telles hypersurfaces.
À~tout le moins, il résulte des deux remarques ci-dessous que le corps $\C(x,y)$ ne satisfait pas la propriété~$C_1^1$ forte,
contrairement à $\C((x))((y))$ (cf.~corollaire~\ref{cor:itere}).
Nous ne savons pas ce qu'il en est pour le corps $\C((x,y))$.
Rappelons que $\C(x,y)$ et $\C((x))((y))$ sont des corps~$C_2$ (cf.~\cite{langqac}, \cite{greenberg});
c'est une question ouverte de savoir si $\C((x,y))$ est~$C_2$.

\begin{remarques}
(i) Soient $\pi:S\to \P^1_\C \times \P^1_\C$ un revêtement double ramifié le long d'une
courbe lisse très générale de bidegré~$(6,6)$ et $p:\P^1_\C \times \P^1_\C \to \P^1_\C$ la première projection.
D'après la version de Buium~\cite{buium} du théorème de Noether--Lefschetz,
l'application $\pi^*:\mathrm{NS}(\P^1_\C \times \P^1_\C) \to \mathrm{NS}(S)$ est un isomorphisme.
Par conséquent, le nombre d'intersection $(p^{-1}(0) \cdot \pi_*D)$ est pair pour tout diviseur~$D$ sur~$S$
et la fibre générique~$X$ de $p\circ\pi$ est donc d'indice~$2$.
Or~$X$ est une courbe projective, lisse, géométriquement irréductible, de genre~$2$.
Le corps~$\C(t)$ ne vérifie donc pas la propriété~$C_1^0$ forte,
bien qu'il soit~$C_1$ d'après Tsen.

(ii) Soit~$K/k$ une extension de corps.
Supposons que~$K$ soit le corps des fractions d'un anneau de valuation discrète contenant~$k$ et
de corps résiduel~$k$.
Si~$K$ vérifie la propriété~$C_1^1$ forte, alors~$k$ vérifie la propriété~$C_1^0$ forte.
En effet,
la valuation induit
une surjection $K^*/N_1(X \otimes_k K/K) \twoheadrightarrow \Z/N_0(X/k)$
pour tout $k$\nobreakdash-schéma de type fini~$X$
(cf.~\cite[\textsection2]{rost}).
\end{remarques}

\bibliographystyle{monamsalpha}
\bibliography{katokuzumaki}
\end{document}